\documentclass[11pt,reqno]{amsart}

\usepackage{amssymb,amsmath,amsfonts,amsthm}
\usepackage{mathtools}
\usepackage{enumitem}
\usepackage{graphicx}
\usepackage{microtype}
\usepackage[colorlinks=true,linkcolor=blue,citecolor=blue,urlcolor=blue]{hyperref}

\newtheorem{theorem}{Theorem}[section]
\newtheorem{lemma}[theorem]{Lemma}
\newtheorem{proposition}[theorem]{Proposition}

\theoremstyle{definition}
\newtheorem{definition}[theorem]{Definition}

\theoremstyle{remark}
\newtheorem{remark}[theorem]{Remark}

\numberwithin{equation}{section}

\newcommand{\R}{\mathbb R}
\newcommand{\Sn}{\mathbb S}

\newcommand{\sS}{\mathcal S}
\newcommand{\cQ}{\mathcal Q}

\title[Rotational odd $\sigma_k$-flow tori]
{Singular Rotational Self-Similar Tori for Odd
$\sigma_k$-Curvature Flows}

\author{Haoxuan Cheng}
\thanks{School of Mathematical Sciences, Fudan University, Shanghai 200433,
China. Email address: hxcheng25@m.fudan.edu.cn}
\author{Junqi Lai}
\thanks{School of Mathematical Sciences, South China Normal University,
Guangzhou 510631, China. Email address: 2019021668@m.scnu.edu.cn}
\author{Guoxin Wei}
\thanks{School of Mathematical Sciences, South China Normal University,
Guangzhou 510631, China. Email address: weiguoxin@tsinghua.org.cn}

\subjclass[2020]{53E40, 53C44, 34A12}
\keywords{$\sigma_k$-flow, self-similar solution, rotational hypersurface,
singular ODE, shooting method}

\begin{document}

\begin{abstract}
For every pair of integers $3\leq k<n$ with $k$ odd, we construct a compact
embedded rotational torus in $\R^{n+1}$ whose homothetic dilations satisfy the
unnormalised $\sigma_k$-curvature flow in a Sobolev almost-everywhere sense.
Its profile curve has H\"older regularity $C^{1,1/k}$ and Sobolev regularity
$W^{2,p}$ for every $1\leq p<k/(k-1)$.  Away from two
singular latitudes the torus is smooth; globally, the flow equation is
interpreted using the weak shape operator of the associated Lipschitz
boundary.  Under rotational symmetry, the self-similar
equation $\langle X,\nu\rangle=-\sigma_k$, where $X$ is the position vector
and $\nu$ is the unit normal, reduces to a degenerate profile system.
We solve this system by combining an odd-power desingularisation, a shooting
argument, uniform radial and axial bounds, and a strict gap between the
shooting parameters and the cylindrical radius.  No classical $C^2$
rotational torus can satisfy the soliton equation, so the loss of regularity
is unavoidable within the rotational toroidal class.
\end{abstract}

\maketitle

\section{Introduction}

Let $M^n$ be a smooth $n$-manifold and let
$X:M^n\to\R^{n+1}$ be an oriented hypersurface immersion into Euclidean
$(n+1)$-space, where $\R$ denotes the real numbers.  Let $\nu$ be its unit
normal.  We use the shape-operator convention
$S=-D\nu$, where $D\nu$ is the Euclidean differential of the normal field.
Fix integers
\begin{equation}\label{eq:nk-range}
 3\leq k<n,\qquad k\ \text{odd},
\end{equation}
and write $\sigma_k(S)$ for the $k$-th elementary symmetric polynomial of
the principal curvatures.  We study the unnormalised $\sigma_k$-curvature flow
\begin{equation}\label{eq:sigma-k-flow}
 \left\langle\frac{\partial X}{\partial t},\nu\right\rangle=\sigma_k(S).
\end{equation}
Fix an extinction time $T\in\R$.  For the homothetically shrinking ansatz
$X_t=((k+1)(T-t))^{1/(k+1)}X$, equation \eqref{eq:sigma-k-flow} reduces to the soliton
equation
\begin{equation}\label{eq:self-similar}
  \langle X,\nu\rangle=-\sigma_k(S).
\end{equation}
Thus our primary geometric problem is to construct a compact rotational
self-similar solution of \eqref{eq:sigma-k-flow}; equation
\eqref{eq:self-similar}, and subsequently its profile ODE, are the stationary
equations used to carry out that construction.

Self-similar solutions are basic models for singularities of geometric flows.
For curve shortening flow, homothetic solutions were studied systematically
by Abresch--Langer~\cite{AbreschLanger1986}.  In mean-curvature flow, the
foundational results of Huisken~\cite{Huisken1984,Huisken1990}, the rotational
shrinking torus of Angenent~\cite{Angenent1992}, the rotational analysis of
Kleene--M{\o}ller~\cite{KleeneMoller2014}, and the stability theory of
Colding--Minicozzi~\cite{ColdingMinicozzi2012} exhibit the distinct roles of
spherical, cylindrical, planar, and nonconvex models.

The classical theory for fully nonlinear curvature equations and flows is
usually developed within convex, elliptic, or admissible regimes; see
Caffarelli--Nirenberg--Spruck~\cite{CaffarelliNirenbergSpruck1985},
Andrews~\cite{Andrews1994}, McCoy~\cite{McCoy2011}, and
Andrews--Langford--McCoy~\cite{AndrewsLangfordMcCoy2013}.  Rigidity for powers
of $\sigma_k$ was obtained by Gao--Li--Ma~\cite{GaoLiMa2018}, while convergence
and self-similar solutions for powers of Gauss curvature were studied by
Andrews--Guan--Ni~\cite{AndrewsGuanNi2016} and
Brendle--Choi--Daskalopoulos~\cite{BrendleChoiDaskalopoulos2017}.  These
admissible regimes do not cover the singular nonconvex torus considered here.

Our shooting construction is motivated by the rotational construction of
Angenent and by related $\lambda$-hypersurface constructions
of Cheng--Wei~\cite{ChengWei2021} and
Cheng--Lai--Wei~\cite{ChengLaiWei2024}.  After rotational symmetry is imposed,
the soliton equation \eqref{eq:self-similar} becomes an ODE for a planar
generating curve, which we call the profile curve.  This reduced system loses
its highest-order term when the profile curve has
a vertical tangent, precisely where a closed toroidal profile curve must pass.

The degeneration has a geometric consequence that distinguishes
\eqref{eq:self-similar} from the mean-curvature case: a classical $C^2$
rotational torus cannot solve the equation.  Indeed, at an extremum of the
axial coordinate all $n-1$ principal curvatures in the rotational directions
vanish, forcing
$\sigma_k=0$ and hence forcing the extremal axial coordinate itself to vanish.
Thus any rotational toroidal solution must leave the classical $C^2$
category; the equation itself forces the loss of regularity.  At each
vertical tangent, the meridional curvature
$\kappa_{\mathrm{mer}}=-d\theta/ds$ and the rotational curvature
$\kappa_{\mathrm{rot}}=\cos\theta/r$ have respective orders
$|s-s_0|^{-(k-1)/k}$ and $|s-s_0|^{1/k}$.  The $k-1$ vanishing rotational
factors therefore compensate for the divergent meridional factor, and
$\sigma_k$ remains bounded.
The next theorem constructs such a torus and identifies the Sobolev class in
which its homothetic dilations satisfy the flow equation.

\begin{theorem}
\label{thm:main}
For every pair of integers satisfying \eqref{eq:nk-range}, there is a compact
embedded rotational torus $\Sigma^n\subset\R^{n+1}$ whose homothetic
dilations form a shrinking solution of the unnormalised $\sigma_k$-curvature
flow \eqref{eq:sigma-k-flow} in the Sobolev almost-everywhere sense defined in
Section~\ref{sec:weak}.  More precisely, $\Sigma$ has the following properties.
\begin{enumerate}[label=\textup{(\roman*)}]
  \item The torus is smooth away from two rotational latitudes and satisfies
  $\langle X,\nu\rangle=-\sigma_k(S)$ there.
  \item If $\Omega$ is the bounded domain enclosed by $\Sigma$ and
  $S_{\mathrm w}$ is the weak shape operator of $\partial\Omega=\Sigma$, then
  $\langle X,\nu\rangle=-\sigma_k(S_{\mathrm w})$ almost everywhere with
  respect to the $n$-dimensional Hausdorff measure $\mathcal H^n$.
  \item Set $X_t=((k+1)(T-t))^{1/(k+1)}X$ and
  $\Sigma_t=X_t(M)$.  If $\nu_t$ and $S_{\mathrm w,t}$ are the outward unit
  normal and weak shape operator of $\Sigma_t$, respectively, then
  $\langle\partial X_t/\partial t,\nu_t\rangle
  =\sigma_k(S_{\mathrm w,t})$ at $\mathcal H^n$-almost every point, for every
  $t<T$.
  \item Its profile curve has H\"older regularity $C^{1,1/k}$ and belongs
  to the Sobolev class $W^{2,p}$ for every $1\leq p<k/(k-1)$.
\end{enumerate}
If $s$ is arclength along the profile curve and $s_0$ is either singular
parameter, then the profile curve is
not $C^2$ at $s_0$, and the magnitude of its meridional principal curvature
is asymptotic to a positive multiple of $|s-s_0|^{-(k-1)/k}$.
\end{theorem}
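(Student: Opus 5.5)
We will parametrise the profile curve in the half-plane $\{(x,r):r>0\}$ by arclength as $(x(s),r(s))$ with tangent angle $\theta$, so $x'=\cos\theta$ and $r'=\sin\theta$. The rotational curvature is $\kappa_{\mathrm{rot}}=\cos\theta/r$ with multiplicity $n-1$, and $\kappa_{\mathrm{mer}}=-\theta'$. Then
\[
\sigma_k=\binom{n-1}{k}\kappa_{\mathrm{rot}}^k+\binom{n-1}{k-1}\kappa_{\mathrm{rot}}^{k-1}\kappa_{\mathrm{mer}},
\]
and with $\langle X,\nu\rangle = x\sin\theta - r\cos\theta$ (up to orientation) the soliton equation \eqref{eq:self-similar} becomes a first-order ODE for $\theta$. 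Its leading coefficient $\binom{n-1}{k-1}(\cos\theta/r)^{k-1}$ vanishes at vertical tangents. This is the profile system.

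To remove the degeneracy we will introduce the odd-power variable $w=(\cos\theta/r)^{k}$, or equivalently $u=\cos\theta$ with $u^{k}$ as unknown. Because $k$ is odd, $u\mapsto u^k$ is a homeomorphism of $\R$. A computation should turn $\kappa_{\mathrm{rot}}^{k-1}\theta'$ into a multiple of $(u^k)'/(k\sin\theta)$, so the system becomes regular: in $(x,r,v=u^k)$ it reads
\[
v'=F(x,r,v),\qquad x'=v^{1/k},\qquad r'=\pm\sqrt{1-v^{2/k}},
\]
or better, parametrised by $x$ or $r$ near the vertical points. Crossing a vertical tangent then corresponds to $v$ passing transversally through $0$. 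This gives $\cos\theta\sim c|s-s_0|^{1/k}$, which yields $\kappa_{\mathrm{rot}}\sim|s-s_0|^{1/k}$ and $\theta'\sim|s-s_0|^{1/k-1}$. From these asymptotics the final claim and (iv) follow directly. Indeed, $\theta\in C^{0,1/k}$ gives a $C^{1,1/k}$ profile, and $|\theta'|^p$ is integrable exactly when $p(k-1)/k<1$, that is, $p<k/(k-1)$.

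For existence we will use a shooting argument in the spirit of Angenent. We shoot from the axis of symmetry $x=0$ perpendicularly, starting at height $r=R$ with $\theta$ vertical-compatible data, that is, from the point where the curve crosses $x=0$ horizontally. By reflection symmetry in $x$, a closed curve results once the orbit returns to $x=0$ horizontally. The cylinder $r=r_{\mathrm{cyl}}$, determined by $r=\binom{n-1}{k}r^{-k}$, is an explicit solution. The steps, in order, are as follows.
\begin{enumerate}
\item Prove local existence and continuous dependence for the desingularised system through the vertical points.
\item Prove uniform radial bounds $0<c\le r\le C$ and axial bounds along orbits, which prevent escape to the axis or to infinity.
\item Show that for $R$ near a lower threshold the orbit returns to $x=0$ with the angle on one side, and for $R$ near another threshold it returns on the other side.
\item Apply the intermediate value theorem to find $R_*$ for which the return to $x=0$ is perpendicular.
\end{enumerate}
The strict gap between the shooting parameters and $r_{\mathrm{cyl}}$ is used to ensure that $R_*\ne r_{\mathrm{cyl}}$, so the curve is not the cylinder. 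We then reflect to obtain a simple closed curve. Embeddedness follows from monotonicity of $x$ between vertical points, since $\operatorname{sign}\cos\theta$ is fixed on each arc. Exactly two vertical points (singular latitudes) occur, one on the upper and one on the lower arc.

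For (ii)--(iii) we will observe that $\Omega$ has $C^{1,1/k}$, hence Lipschitz, boundary, and that the local graph functions are $W^{2,p}$. The weak shape operator is therefore defined a.e. and agrees with the classical one off the two latitudes, which have $\mathcal H^n$-measure zero. Thus (ii) follows from (i), and (iii) follows by scaling, since $\sigma_k$ is homogeneous of degree $-k$ under dilation and $\partial_t X_t=-((k+1)(T-t))^{-k/(k+1)}X$. The main obstacle is step 3 together with the uniform bounds of step 2: controlling the sign of the return angle at both ends of the shooting interval through the degenerate crossings, while keeping the interval strictly separated from $r_{\mathrm{cyl}}$. I would first try comparison with the cylinder and with the linearisation about it at one end, and a blow-up or limiting-profile analysis at the other.
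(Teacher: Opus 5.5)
Your outline has the right architecture: rotational reduction, an odd-power variable at vertical tangents, perpendicular shooting from the $r$-axis closed up by reflection, blow-up at small radius, linearisation at the cylinder, and then the weak interpretation and scaling for (ii)--(iii). The regularity and scaling parts are essentially correct. There is, however, a genuine gap in steps 3--4, and the uniform bounds of step 2 are asserted without the ingredients that make them hold.

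\textbf{The intermediate value step fails as framed.} Near the cylinder, orbits do \emph{not} return to $x=0$ ``on the other side''. Write $\delta=r_{n,k}-\varepsilon$ and rescale $r-r_{n,k}$ and $\theta$ by $\varepsilon$. The limit equation is $\mathsf Y''=c_{n,k}(s\mathsf Y'-(k+1)\mathsf Y)$, solved by a multiple of $\operatorname{He}_{k+1}(\sqrt{c_{n,k}}\,s)$, and its derivative has a simple positive zero. So these orbits come back to $\theta=0$ while $x>0$, before any vertical tangent. There is no continuous return-angle function on a parameter interval whose endpoint values straddle $\pi$.

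What replaces the intermediate value theorem is a supremum argument. The set $\mathcal S$ of radii whose orbit returns to $x=0$ with $\theta<\pi$ is open, nonempty, and bounded away from $r_{n,k}$. At $\delta_*=\sup\mathcal S\notin\mathcal S$ you must show that limits of the orbits with $\delta_j\uparrow\delta_*$ still reach $x=0$ with terminal angle in $(\pi/2,\pi]$; persistence of a transverse return then excludes angles below $\pi$.

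\textbf{The compactness at $\delta_*$ needs uniform ingredients your plan does not supply.}
\begin{itemize}
\item A lower bound on $x$ at the crossing. Your transversality constant is proportional to $r_0^{k-1}x_0$, so this bound is what gives a common crossing chart.
\item A post-crossing barrier, $du/ds>0$ for small $\theta-\pi/2>0$, forbidding a second vertical tangent. This is also what yields exactly two singular latitudes.
\item A terminal-radius bound. For $r\geq R_0$ one has $d\theta/ds\geq r^k/(2A)$, so if the crossing radius is large the post-crossing arclength is too short to lose the crossing value of $x$.
\item An axial bound, which is the hardest step. The paper uses the forward-invariant set $\{h\leq0\}$ of the slope equation $dq/d\widehat X=\lambda R^{k-1}(1+q^2)^{(k+1)/2}h$, showing that $q=\tan\theta$ increases before the crossing on every shooting orbit. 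This is combined with the cylinder gap, which confines $\delta$ to a compact subinterval of $(0,r_{n,k})$, hence gives a uniform initial slope $q_0>0$ and $x\leq\tau+C_e/q_0$.
\end{itemize}
So the gap is not merely there to keep $R_*\neq r_{\mathrm{cyl}}$ (the cylinder never returns to the axis anyway); it is what makes the axial bound uniform.

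In the limit you must also exclude an interior tangency with $\theta=\pi$ (at $r=r_{n,k}$ this uses uniqueness against the reversed cylinder) and an interior zero of $x$. Together these give the graph structure $x=f(r)>0$ with $r$ increasing, and that is what actually proves embeddedness. Monotonicity of $x$ on each arc does not by itself stop the two arcs from crossing.

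\textbf{A smaller but real point concerns the desingularisation.} With $v=\cos^k\theta$, your system in arclength or in $r$ contains $v^{1/k}$. It appears directly in $x'=v^{1/k}$ and through $\sin\theta=\pm\sqrt{1-v^{2/k}}$ inside $F$, and neither is Lipschitz at $v=0$. So uniqueness through the crossing does not follow from standard ODE theory. Nor can $x$ serve as a parameter there, since it is extremal.

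The fix is to use $v$ itself as the independent variable, as the paper does with $u=(\theta-\theta_0)^k$. Then $dx/dv$, $dr/dv$ and $ds/dv=1/F$ are continuous in $v$ and Lipschitz in the state, provided $F(x_0,r_0,0)=-k r_0^{k-1}x_0/A\neq0$. That is exactly the nondegeneracy $x_0\neq0$, which you must track uniformly along the shooting family.
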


The proof has four main ingredients.  First, at a vertical
tangent we replace
$\phi=\theta-\pi/2$ by the odd-power variable $u=\phi^k$.  The resulting equation
has nonzero speed whenever $x\neq0$, which gives a unique local continuation
and the sharp $C^{1,1/k}$ regularity.  Second, profile curves determined by
the initial data $(x,r,\theta)=(0,\delta,0)$ with $\delta>0$ small are compared
with an explicit scaled limiting system.  This comparison, together with
support-function estimates after the limiting scale, proves that they cross
$\theta=\pi/2$ and return to the $r$-axis.  Third, a linearised calculation
near the cylindrical radius establishes a strict gap between the shooting
set and the cylindrical radius.  Finally, analytic barriers
give uniform control of the terminal radius and of the axial coordinate.  A
compactness argument at the critical shooting parameter then produces a
profile curve intersecting the $r$-axis at angle $\pi$, and reflection closes
the curve.

The most delicate point is the axial compactness.  A direct comparison from
below with the cylindrical solution would force the critical shooting family too
close to the cylinder.  Instead, the strict cylinder gap restricts the initial
radius to a compact subinterval, while the invariant inequality $h>0$
prevents the slope $q=\tan\theta$ from decreasing before the singular
crossing.  Together with the terminal-radius bound, this yields a global
bound for $x$.

The paper is organised as follows.  Section~\ref{sec:reduction} derives the
rotational system and records the classical obstruction and reflection
symmetry.  Section~\ref{sec:crossing} develops the local desingularisation.
Section~\ref{sec:small} establishes non-emptiness of the shooting set.
Section~\ref{sec:compactness} proves the cylinder gap and the two global
compactness estimates.  Section~\ref{sec:critical} constructs and closes the
critical solution.  Section~\ref{sec:weak} gives the precise Sobolev
interpretation.  Section~\ref{sec:curvature-measure} constructs the associated
rotational $\sigma_k$-density measure and proves a smooth approximation
theorem without a residual defect measure.

\section{Rotational reduction and elementary structure}\label{sec:reduction}

Set
\begin{equation}\label{eq:AB}
 \begin{aligned}
 A&=A_k:=\binom{n-1}{k-1},\\
 B&=B_k:=\binom{n-1}{k},
 \end{aligned}
\end{equation}
and let $r_{n,k}=B^{1/(k+1)}$ denote the radius of the elementary cylindrical
solution.  A rotational hypersurface is parametrised by
\begin{equation}\label{eq:rot-param}
 X(s,\omega)=(x(s),r(s)\omega),\qquad \omega\in\Sn^{n-1},
\end{equation}
where $\Sn^m\subset\R^{m+1}$ denotes the unit $m$-sphere, $s$ is arclength,
$x=x(s)$ is the axial coordinate,
$r=r(s)>0$ is the distance from the rotation axis, and
$\theta=\theta(s)$ is the real tangent angle.  Thus
\begin{equation}\label{eq:tangent-angle}
\frac{dx}{ds}=\cos\theta,\qquad \frac{dr}{ds}=\sin\theta.
\end{equation}

The rotational calculation gives the following reduced system.

\begin{proposition}\label{prop:reduction}
With the unit normal
$\nu=(\sin\theta,-\cos\theta\,\omega)$, equation
\eqref{eq:self-similar} is equivalent, away from $\cos\theta=0$, to
\begin{equation}\label{eq:profile-system}
\begin{cases}
\dfrac{dx}{ds}=\cos\theta,\\
\dfrac{dr}{ds}=\sin\theta,\\[2mm]
A\left(\dfrac{\cos\theta}{r}\right)^{k-1}\dfrac{d\theta}{ds}
=x\sin\theta-r\cos\theta
+B\left(\dfrac{\cos\theta}{r}\right)^k.
\end{cases}
\end{equation}
\end{proposition}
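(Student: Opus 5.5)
The plan is to compute the principal curvatures of the rotational parametrisation \eqref{eq:rot-param} with respect to the stated normal, assemble $\sigma_k(S)$ from them, compute the support function $\langle X,\nu\rangle$, and then rearrange the soliton equation \eqref{eq:self-similar} into the third line of \eqref{eq:profile-system}. The first two lines are just the definition \eqref{eq:tangent-angle} of the tangent angle.

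\emph{Normal and support function.} The tangent vectors are $X_s=(\cos\theta,\sin\theta\,\omega)$ and $X_{\omega}=(0,r\,\dot\omega)$ for $\dot\omega\in T_\omega\Sn^{n-1}$. The vector $\nu=(\sin\theta,-\cos\theta\,\omega)$ has unit length and is orthogonal to both, since $\langle\omega,\dot\omega\rangle=0$ and $\cos\theta\sin\theta-\sin\theta\cos\theta=0$. The support function is
\[
\langle X,\nu\rangle=x\sin\theta-r\cos\theta .
\]

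\emph{Principal curvatures, with $S=-D\nu$.}
\begin{itemize}
\item Meridional direction: $\partial_s\nu=\theta'(\cos\theta,\sin\theta\,\omega)=\theta' X_s$. Hence $S X_s=-\theta' X_s$, giving $\kappa_{\mathrm{mer}}=-\theta'$. This matches the convention quoted in the introduction.
\item Rotational directions: $D_{\dot\omega}\nu=(0,-\cos\theta\,\dot\omega)=-(\cos\theta/r)\,X_\omega(\dot\omega)$. Hence the $n-1$ rotational curvatures all equal $\kappa_{\mathrm{rot}}=\cos\theta/r$.
\end{itemize}

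\emph{Elementary symmetric function.} For one eigenvalue $a$ and $n-1$ equal eigenvalues $b$,
\[
\sigma_k=a\binom{n-1}{k-1}b^{k-1}+\binom{n-1}{k}b^k .
\]
Substituting $a=-\theta'$ and $b=\cos\theta/r$ gives
\[
\sigma_k(S)=-A\Bigl(\frac{\cos\theta}{r}\Bigr)^{k-1}\theta'+B\Bigl(\frac{\cos\theta}{r}\Bigr)^{k},
\]
with $A$ and $B$ as in \eqref{eq:AB}.

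\emph{Rearranging.} Equation \eqref{eq:self-similar} reads
\[
x\sin\theta-r\cos\theta=A\Bigl(\frac{\cos\theta}{r}\Bigr)^{k-1}\theta'-B\Bigl(\frac{\cos\theta}{r}\Bigr)^{k}.
\]
Moving the $B$-term to the left-hand side yields exactly the third line of \eqref{eq:profile-system}.

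Conversely, if a curve satisfies \eqref{eq:profile-system}, the same identities show that \eqref{eq:self-similar} holds. The computation is pointwise and reversible, so the two formulations are equivalent.

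The restriction $\cos\theta\neq0$ matters only for interpreting the reduced system as an ODE. When $\cos\theta\neq0$ the coefficient $A(\cos\theta/r)^{k-1}$ is nonzero, so the third line can be solved for $\theta'$. At $\cos\theta=0$ the $\theta'$ term drops out and the ODE degenerates. This is the point the rest of the paper must address.

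There is no real obstacle here. The only care needed is with signs: one must check that the convention $S=-D\nu$ combined with the chosen $\nu$ produces $\kappa_{\mathrm{mer}}=-\theta'$, rather than $+\theta'$, together with $\kappa_{\mathrm{rot}}=+\cos\theta/r$. Both signs are verified directly by the derivatives computed above.
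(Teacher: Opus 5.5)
Your proposal is correct and follows the paper's proof exactly: compute $\kappa_{\mathrm{mer}}=-d\theta/ds$ and $\kappa_{\mathrm{rot}}=\cos\theta/r$ under $S=-D\nu$, assemble $\sigma_k$ with the coefficients $A$ and $B$, compute $\langle X,\nu\rangle=x\sin\theta-r\cos\theta$, and rearrange. You additionally verify the normal and carry out the derivatives of $\nu$ explicitly, which the paper leaves to the reader.
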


\begin{proof}
The principal curvatures in the convention $S=-D\nu$ are
\begin{equation}\label{eq:principal-curvatures}
 \kappa_1=-\frac{d\theta}{ds},
 \qquad
 \kappa_2=\cdots=\kappa_n=\frac{\cos\theta}{r}.
\end{equation}
Consequently,
\[
 \sigma_k
 =B\left(\frac{\cos\theta}{r}\right)^k
 -A\left(\frac{\cos\theta}{r}\right)^{k-1}\frac{d\theta}{ds},
 \qquad
 \langle X,\nu\rangle=x\sin\theta-r\cos\theta.
\]
Substitution in \eqref{eq:self-similar} gives
\eqref{eq:profile-system}.
\end{proof}

The classical obstruction can be stated precisely as follows.

\begin{proposition}\label{prop:no-C2-torus}
There is no regular $C^2$ simple closed profile curve contained in $r>0$
whose rotational hypersurface satisfies \eqref{eq:self-similar} pointwise.
\end{proposition}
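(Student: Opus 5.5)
Suppose, for contradiction, that $\gamma(s)=(x(s),r(s))$ is a regular $C^2$ simple closed profile curve in $\{r>0\}$, parametrised by arclength, whose rotational hypersurface satisfies \eqref{eq:self-similar} pointwise. The curve is compact and $x$ is continuous, so $x$ attains its maximum $x_{\max}$ and its minimum $x_{\min}$ on $\gamma$. The strategy is to show that the equation forces both extremal values to vanish. Then $x\equiv0$ on $\gamma$. But a regular simple closed curve cannot lie in a vertical line, which gives the contradiction.

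Let $s_0$ be a parameter where $x$ is extremal. Since $x$ is $C^1$, we have $dx/ds=\cos\theta(s_0)=0$, so the tangent is vertical and $\sin\theta(s_0)=\pm1$. The formulas \eqref{eq:principal-curvatures} in the proof of Proposition~\ref{prop:reduction} hold at every point of a $C^2$ regular rotational hypersurface, including points with $\cos\theta=0$. Only the division that yields \eqref{eq:profile-system} needs $\cos\theta\ne0$, so I would apply the unreduced equation \eqref{eq:self-similar} at $s_0$.

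At $s_0$ the rotational curvatures $\kappa_2=\dots=\kappa_n=\cos\theta/r$ all vanish, while $\kappa_1=-\theta'(s_0)$ is finite because the curve is $C^2$. Every term of $\sigma_k$ is a product of $k\ge 3\ge2$ distinct curvatures, so each term contains at least one rotational factor. Hence
\[
\sigma_k(S)(s_0)=B\Bigl(\frac{\cos\theta}{r}\Bigr)^k-A\Bigl(\frac{\cos\theta}{r}\Bigr)^{k-1}\frac{d\theta}{ds}=0 .
\]
The support function at $s_0$ is $\langle X,\nu\rangle=x\sin\theta-r\cos\theta=\pm x(s_0)$. Equation \eqref{eq:self-similar} then forces $x(s_0)=0$. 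Applying this at both the maximum and the minimum gives $x_{\max}=x_{\min}=0$.

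So $x\equiv0$ along $\gamma$, and $\gamma$ lies in the line $\{x=0\}$. A continuous injective map from the circle into a line does not exist, so $\gamma$ cannot be a simple closed curve. Even without injectivity, the curve would have $\cos\theta\equiv0$ and hence constant $\theta$, so it could not close up. This is the contradiction.

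The main point to check is that the curvature formulas and the pointwise equation remain valid at vertical tangents for a $C^2$ curve. This holds because the parametrisation \eqref{eq:rot-param} is a $C^2$ immersion and the principal curvatures there are exactly as in \eqref{eq:principal-curvatures}. The remaining steps are straightforward.
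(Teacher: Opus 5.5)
Your proposal is correct and follows the paper's proof essentially step for step. At each extremum of $x$ the rotational curvatures vanish while the meridional one stays finite, so $\sigma_k=0$, the equation forces $x=0$ there, and then $\max x=\min x=0$ would put the curve in a line. Your extra checks are sound refinements rather than a different route: that the curvature formulas remain valid at vertical tangents for a $C^2$ curve, and that one rotational factor per term of $\sigma_k$ already suffices.
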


\begin{proof}
The function $x$ attains a maximum and a minimum on a closed profile curve.  At
either extremum, $\cos\theta=0$ and $\sin\theta=\pm1$.  Since the profile curve is
$C^2$, the meridional principal curvature is finite, whereas every principal
curvature in a rotational direction in \eqref{eq:principal-curvatures}
vanishes.  Every term in $\sigma_k$ contains at least $k-1\geq2$ such
principal curvatures, and hence $\sigma_k=0$.  Equation
\eqref{eq:self-similar} gives
$x\sin\theta=0$, so $x=0$ at every axial extremum.  Therefore
$\max x=\min x=0$.  A regular simple closed curve cannot be contained in the
line $x=0$, which gives a contradiction.
\end{proof}

The system has three elementary solutions.  The $r$-axis gives a hyperplane,
the horizontal line $r=r_{n,k}$ gives the cylinder
$\R\times\Sn^{n-1}(r_{n,k})$, where $\Sn^{n-1}(r_{n,k})$ is the round
$(n-1)$-sphere of radius $r_{n,k}$, and a semicircle of radius
\begin{equation}\label{eq:sphere-radius}
 r_{n,k}^*=(A+B)^{1/(k+1)}
\end{equation}
gives a round sphere.  Direct substitution verifies all three assertions.
The cylindrical profile is a noncompact horizontal line, not a simple closed
profile curve, so it is not covered by Proposition~\ref{prop:no-C2-torus}.

\begin{figure}[htbp]
  \centering
  \includegraphics[width=0.92\textwidth]{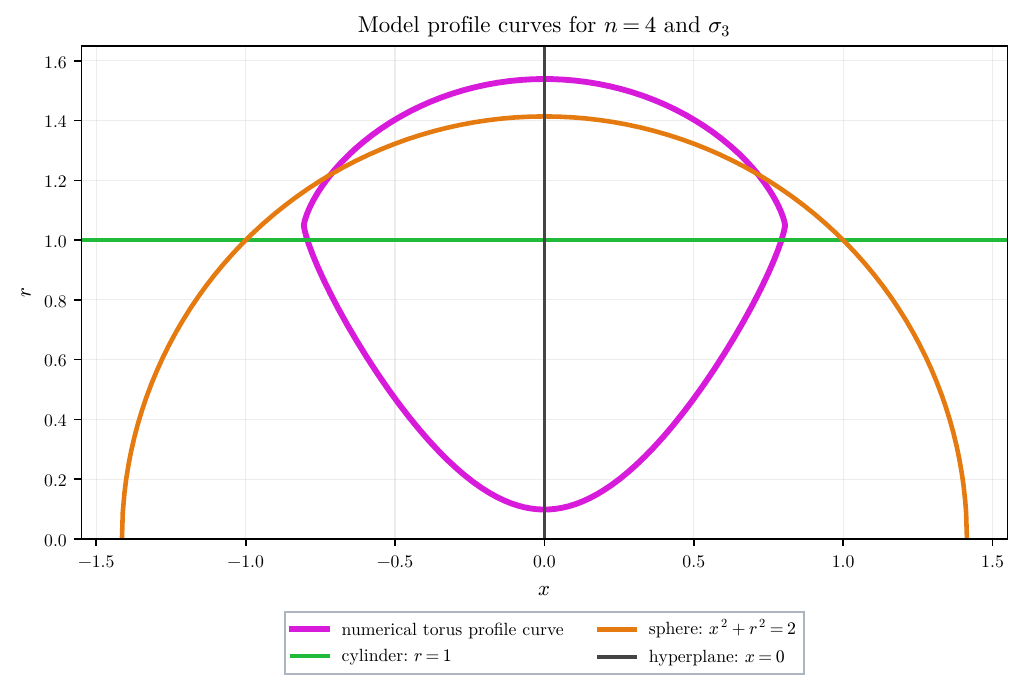}
  \caption{The three exact elementary profile curves and a numerical
  approximation of the closed profile curve for the special case $k=3$ and
  $n=4$.  For this normalisation, the cylinder has $r_{4,3}=1$ and the sphere
  has radius $\sqrt2$.  The magenta curve is included only as a geometric
  illustration; no part of the proof uses its numerical construction.}
  \label{fig:n4-model-profiles}
\end{figure}

The reduced system is invariant under reflection across the $r$-axis.

\begin{proposition}\label{prop:reflection}
If $(x,r,\theta)$ solves \eqref{eq:profile-system} on $[0,L]$, where
$L>0$ is the arclength of the solution segment, then
\begin{equation}\label{eq:reflection}
 (\widetilde x,\widetilde r,\widetilde\theta)(\bar s)
 =(-x(L-\bar s),r(L-\bar s),-\theta(L-\bar s))
\end{equation}
also solves the system.
\end{proposition}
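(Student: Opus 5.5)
We verify the claim by direct substitution, using the chain rule with $\bar s\mapsto s=L-\bar s$, so that $d/d\bar s=-d/ds$. For brevity write $x,r,\theta$ and their $s$-derivatives evaluated at $s=L-\bar s$.

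The two kinematic equations follow at once. First,
\[
\frac{d\widetilde x}{d\bar s}=x'(s)=\cos\theta=\cos\widetilde\theta,
\]
since cosine is even. Second,
\[
\frac{d\widetilde r}{d\bar s}=-r'(s)=-\sin\theta=\sin\widetilde\theta,
\]
since sine is odd. In addition, $\widetilde r=r>0$, so the reflected curve stays in the region $r>0$. Moreover $\cos\widetilde\theta=\cos\theta$, so the reflected curve avoids $\cos\theta=0$ exactly where the original does; the equivalence in Proposition~\ref{prop:reduction} is therefore not disturbed. In fact the third equation below is verified as written, even at points where $\cos\theta=0$.

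For the third equation, note that
\[
\frac{d\widetilde\theta}{d\bar s}=\theta'(s),
\]
because the two sign changes cancel. Also $\cos\widetilde\theta/\widetilde r=\cos\theta/r$. Hence the left side of the third equation for the reflected curve equals $A(\cos\theta/r)^{k-1}\theta'(s)$, which is exactly the original left side.

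On the right side, the sign changes again cancel:
\[
\widetilde x\sin\widetilde\theta=(-x)(-\sin\theta)=x\sin\theta.
\]
The remaining terms are unchanged, because $\widetilde r\cos\widetilde\theta=r\cos\theta$ and $B(\cos\widetilde\theta/\widetilde r)^k=B(\cos\theta/r)^k$. Thus the third equation for the reflected curve is literally the original equation evaluated at $s=L-\bar s$, and it holds there.

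There is no real obstacle in this argument. The only point needing care is the parity bookkeeping: the angle picks up two sign changes (from $\widetilde\theta=-\theta$ and from the reversal of parameter), whereas the radius picks up only one. This is precisely what makes the reflected curve a solution with the same orientation convention for the normal $\nu$.
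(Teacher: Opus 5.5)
Your proof is correct and follows essentially the same route as the paper. Both substitute $s=L-\bar s$, obtain the two kinematic equations from the parity of cosine and sine, and show that both sides of the angular equation are unchanged, because $d\widetilde\theta/d\bar s=\theta'(s)$, $\widetilde x\sin\widetilde\theta=x\sin\theta$, and $\cos\widetilde\theta/\widetilde r=\cos\theta/r$.
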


\begin{proof}
Writing $s=L-\bar s$ and differentiating \eqref{eq:reflection} gives
\begin{align*}
 \frac{d\widetilde x}{d\bar s}(\bar s)
 &=\frac{dx}{ds}(s)=\cos\theta(s)=\cos\widetilde\theta(\bar s),\\
 \frac{d\widetilde r}{d\bar s}(\bar s)
 &=-\frac{dr}{ds}(s)=-\sin\theta(s)=\sin\widetilde\theta(\bar s).
\end{align*}
Moreover,
$\frac{d\widetilde\theta}{d\bar s}(\bar s)=\frac{d\theta}{ds}(s)$.
Hence the left-hand side of the
angular equation is unchanged, while its right-hand side becomes
\[
 \widetilde x\sin\widetilde\theta
 -\widetilde r\cos\widetilde\theta
 +B\left(\frac{\cos\widetilde\theta}{\widetilde r}\right)^k
 =
 x\sin\theta-r\cos\theta
 +B\left(\frac{\cos\theta}{r}\right)^k.
\]
Thus all three equations are preserved.
\end{proof}

Thus it suffices to construct a profile curve starting orthogonally from the
$r$-axis and returning to that axis with tangent angle $\pi$.  Its reflection
then gives a closed rotational profile curve.

Whenever $r>0$ and $\cos\theta\neq0$, solving the third equation in
\eqref{eq:profile-system} for $d\theta/ds$ gives a smooth vector field on the
state space with coordinates $(x,r,\theta)$.  Standard ODE theory gives unique
local integral curves of this vector field, their maximal continuation, and
continuous dependence on initial data and parameters; see
Teschl~\cite[Chapter~2]{Teschl2012}.  At $\cos\theta=0$ this vector-field
formulation is singular, so those results do not apply directly.  The next
section constructs the required continuation in an odd-power angular
variable.

We call the image of a solution of \eqref{eq:profile-system} in the state
space $(x,r,\theta)$ its \emph{solution orbit}, or simply its orbit.  Thus
``orbit'' below has its standard dynamical-systems meaning and does not denote
the planar profile curve itself.

\section{Desingularisation at a vertical tangent}\label{sec:crossing}

At $\theta=\pi/2+m\pi$, $m\in\mathbb Z$, the coefficient of $d\theta/ds$ in
\eqref{eq:profile-system} vanishes.  For odd $k$, the correct local variable
is the $k$-th power of the angular displacement.  Throughout this section,
$u^{1/k}$ denotes the real odd root, for both signs of $u$.

For a limiting state $(x_0,r_0,\theta_0)$ at $s=s_0$, we call a local
continuation a \emph{nondegenerate crossing} if
$x,r\in C^1$, $\theta$ is continuous, the profile system holds classically
for $s\neq s_0$, and
\[
 u=(\theta-\theta_0)^k\in C^1,
 \qquad
 \frac{du}{ds}(s_0)\neq0.
\]
The lemma below proves uniqueness in this class and determines the local
regularity.

\begin{lemma}\label{lem:unique-crossing}
Suppose a solution of \eqref{eq:profile-system} approaches, as $s\to s_0$,
\[
 (x,r,\theta)\longrightarrow(x_0,r_0,\theta_0),
 \qquad r_0>0,
 \qquad \theta_0\in\{\pi/2,-\pi/2\},
\]
and assume
\begin{equation}\label{eq:cross-nondegenerate}
 x_0\sin\theta_0\neq0.
\end{equation}
Then the solution has a unique local continuation through $s_0$ in the class of
nondegenerate crossings.  If $u=(\theta-\theta_0)^k$, then $u$ is strictly
monotone near $s_0$, and
\begin{equation}\label{eq:cross-asymptotic}
 \theta(s)-\theta_0
 \sim
 \left(\frac{k r_0^{k-1}x_0\sin\theta_0}{A}\right)^{1/k}
 (s-s_0)^{1/k},
\end{equation}
where the real $k$-th root is understood.  In particular,
\begin{align}
 x(s)-x_0&=O(|s-s_0|^{(k+1)/k}),\label{eq:x-cross-reg}\\
 r(s)-r_0&=O(|s-s_0|),\label{eq:r-cross-reg}\\
 \left|\frac{d\theta}{ds}(s)\right|&\sim
 \frac1k\left|\frac{k r_0^{k-1}x_0\sin\theta_0}{A}\right|^{1/k}
 |s-s_0|^{-(k-1)/k}.\label{eq:curvature-blowup}
\end{align}
Writing $\gamma=(x,r)$, there are constants $C>0$ and $\eta>0$ such that
\begin{equation}\label{eq:tangent-holder}
 \left|
 \frac{d\gamma}{ds}(s)-\frac{d\gamma}{ds}(t)
 \right|
 \leq C|s-t|^{1/k}
 \qquad
 \text{whenever }|s-s_0|<\eta\text{ and }|t-s_0|<\eta.
\end{equation}
Consequently the profile curve is $C^{1,1/k}$ near the crossing, and the
exponent $1/k$ is optimal.  It is also locally $W^{2,p}$ for every
$1\leq p<k/(k-1)$, but it is not $W^{2,k/(k-1)}$ at the crossing.
\end{lemma}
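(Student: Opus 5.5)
The plan is to rewrite the angular equation in the variable $u=(\theta-\theta_0)^k$ and treat $(x,r,u)$ as an ODE with a Lipschitz right-hand side near $(x_0,r_0,0)$.

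\textbf{Desingularised system.} Set $\phi=\theta-\theta_0$. Then $\cos\theta=-\sin\theta_0\sin\phi$, so
\[
\left(\frac{\cos\theta}{r}\right)^{k-1}=\frac{\sin^{k-1}\phi}{r^{k-1}},
\]
using that $k-1$ is even and $\sin^2\theta_0=1$. Write $\sin\phi=\phi\,g(\phi)$ with $g$ analytic and $g(0)=1$. Since $\phi=u^{1/k}$ with the real odd root, we have $\phi^{k-1}=u^{(k-1)/k}$. Also
\[
\frac{du}{ds}=k\phi^{k-1}\frac{d\phi}{ds}.
\]
Away from $s_0$, the third equation of \eqref{eq:profile-system} therefore becomes
\[
\frac{du}{ds}=\frac{k r^{k-1}}{A\,g(\phi)^{k-1}}\Bigl(x\sin\theta-r\cos\theta+B\Bigl(\frac{\cos\theta}{r}\Bigr)^k\Bigr)=:F(x,r,u).
\]
This works because the factor $\phi^{k-1}$ cancels exactly. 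Together with $x'=\cos\theta$ and $r'=\sin\theta$, every trigonometric expression in $\theta=\theta_0+u^{1/k}$ appears only through $\cos\theta$ and $\sin\theta$.

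\textbf{Main obstacle.} The map $u\mapsto u^{1/k}$ is not Lipschitz at $u=0$, so $F$ and $\cos\theta=-\sin\theta_0\sin(u^{1/k})$ are merely continuous in $u$. Picard's theorem therefore does not apply directly. I will exploit $F(x_0,r_0,0)=k r_0^{k-1}x_0\sin\theta_0/A\neq0$, which is exactly hypothesis \eqref{eq:cross-nondegenerate}, and swap the roles of the variables: use $u$ itself as the independent parameter. On a small neighbourhood, $du/ds$ has a fixed sign, so $u$ is strictly monotone along any nondegenerate crossing. In the parameter $u$ the system reads
\[
\frac{dx}{du}=\frac{\cos\theta}{F},\qquad \frac{dr}{du}=\frac{\sin\theta}{F},\qquad \frac{ds}{du}=\frac1F.
\]
Its right-hand side is continuous in $u$ and smooth (hence Lipschitz) in $(x,r)$. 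Carathéodory/Picard uniqueness applies, since only Lipschitz dependence on the unknowns is needed and mere continuity in the independent variable suffices.

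\textbf{Existence and uniqueness.} Existence follows by solving this system with data $(x_0,r_0,s_0)$ at $u=0$ and inverting $s(u)$. For uniqueness, any nondegenerate crossing has $u\in C^1$ with $u'(s_0)\neq0$, so it can be reparametrised by $u$ and satisfies the same system, on both sides of $s_0$. The limit data are fixed by continuity, which gives uniqueness. One could also argue directly: the given half-solution, reparametrised by $u$, solves the system on one side, and the continuation is the unique solution on the other.

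\textbf{Asymptotics and regularity.} These follow directly.
\begin{itemize}
\item From $u(s)=F_0(s-s_0)+o(s-s_0)$, with $F_0=F(x_0,r_0,0)$, we get \eqref{eq:cross-asymptotic} by taking odd roots.
\item Then $x'=\cos\theta=O(|s-s_0|^{1/k})$ gives \eqref{eq:x-cross-reg}, and $|r'|\le1$ gives \eqref{eq:r-cross-reg}.
\item Differentiating $\phi=u^{1/k}$ gives $\phi'=\tfrac1k u^{(1-k)/k}u'$, which yields \eqref{eq:curvature-blowup}.
\item For \eqref{eq:tangent-holder}: $\gamma'=(\cos\theta,\sin\theta)$ is a Lipschitz function of $\theta$, and $\theta-\theta_0=u^{1/k}$ with $u$ a $C^1$ function. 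Since $a\mapsto a^{1/k}$ is $1/k$-Hölder and $u$ is Lipschitz, the bound holds.
\item Optimality follows from comparing $s$ with $t=s_0$ and using \eqref{eq:cross-asymptotic}.
\item Finally, $|\gamma''|=|\theta'|\sim c|s-s_0|^{-(k-1)/k}$. This lies in $L^p$ exactly when $p(k-1)/k<1$, so the $W^{2,p}$ statement holds for $p<k/(k-1)$ and fails at $p=k/(k-1)$.
\end{itemize}
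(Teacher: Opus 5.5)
Your proposal is correct and follows essentially the same route as the paper. In both, the odd-power variable $u=(\theta-\theta_0)^k$ cancels the degenerate factor, and the nondegeneracy $F(x_0,r_0,0)=k r_0^{k-1}x_0\sin\theta_0/A\neq0$ lets you take $u$ as the independent variable. Uniqueness then follows from Picard with mere continuity in $u$, and the asymptotics, the $C^{1,1/k}$ claim and the $W^{2,p}$ claim follow as you describe. The one step you state without proof, the global $1/k$-H\"older continuity of the real odd root, is proved in the paper by a short factorisation and convexity argument, which gives $|a^{1/k}-b^{1/k}|\leq 2^{(k-1)/k}|a-b|^{1/k}$.
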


\begin{proof}
\medskip\noindent\textbf{Step 1: construct the regular $u$-equation.}
Set
\[
 \varepsilon=\sin\theta_0\in\{-1,1\},
 \qquad \phi=\theta-\theta_0.
\]
Then
\[
 \cos(\theta_0+\phi)=-\varepsilon\sin\phi,
 \qquad
 \sin(\theta_0+\phi)=\varepsilon\cos\phi,
\]
and \eqref{eq:profile-system} becomes
\begin{equation}\label{eq:phi-system}
 \begin{aligned}
 \frac{dx}{ds}&=-\varepsilon\sin\phi,\qquad
 \frac{dr}{ds}=\varepsilon\cos\phi,\\
 \frac{d\phi}{ds}&=\frac{\varepsilon r^{k-1}}{A\sin^{k-1}\phi}
 \left(x\cos\phi+r\sin\phi-B\frac{\sin^k\phi}{r^k}\right).
 \end{aligned}
\end{equation}
For $u=\phi^k$ define
\begin{equation}\label{eq:H-def}
 H(x,r,u)=
 \begin{cases}
 \displaystyle
 \frac{k\varepsilon r^{k-1}}{A}
 \left(\frac{u^{1/k}}{\sin(u^{1/k})}\right)^{k-1}
 \left[x\cos(u^{1/k})+r\sin(u^{1/k})\right.\\[-1mm]
 \displaystyle\hfill\left.
 -B\frac{\sin^k(u^{1/k})}{r^k}\right],&u\neq0,\\[3mm]
 \displaystyle \frac{k\varepsilon r^{k-1}x}{A},&u=0.
 \end{cases}
\end{equation}
For $u\ne0$, equation \eqref{eq:phi-system} gives
$du/ds=H(x,r,u)$.
The quotients
\[
 \left(\frac{\phi}{\sin\phi}\right)^{k-1},\qquad
 \frac{\sin\phi}{\phi},\qquad
 \frac{\sin^k\phi}{\phi^k}
\]
have removable limits at $\phi=0$.  Therefore $H$ is continuous in
$(x,r,u)$ and locally Lipschitz in $(x,r)$, uniformly for $u$ in a compact
interval, as long as $r$ stays bounded away from zero.  Moreover,
\[
 H(x_0,r_0,0)
 =\frac{k r_0^{k-1}x_0\sin\theta_0}{A}\neq0.
\]
Hence, on a sufficiently small neighbourhood with $r>0$, $H$ has fixed sign
and satisfies $|H|\geq c_0>0$.  Using $u$ as the independent variable, the
system becomes
\begin{equation}\label{eq:u-chart}
 \frac{dx}{du}=-\frac{\varepsilon\sin(u^{1/k})}{H(x,r,u)},
 \qquad
 \frac{dr}{du}=\frac{\varepsilon\cos(u^{1/k})}{H(x,r,u)},
 \qquad
 \frac{ds}{du}=\frac1{H(x,r,u)}.
\end{equation}
\medskip\noindent\textbf{Step 2: existence and uniqueness through the crossing.}
Its right-hand side is continuous in $u$ and locally Lipschitz in the state
variables $(x,r,s)$, uniformly on a smaller neighbourhood.  Standard
nonautonomous ODE theory gives a unique solution through
$(x_0,r_0,s_0)$ at $u=0$.  Since $ds/du=1/H$ is continuous, nonzero, and of
fixed sign, $s(u)$ is strictly monotone and has a local inverse.  Composing
with this inverse produces a two-sided continuation in the original
arclength parameter.  Conversely, every continuation in the stated
nondegenerate class satisfies $du/ds=H$ away from $s_0$.  Since both sides
extend continuously to $s_0$, the equality also holds there.  It is bounded
away from zero after the interval is reduced.  Hence $u$ is monotone and the
continuation solves the same initial-value problem \eqref{eq:u-chart} after
reparametrisation.  This proves uniqueness in the claimed class.

\medskip\noindent\textbf{Step 3: crossing asymptotics and regularity.}
Continuity of $H$ gives
\[
 u(s)=
 \frac{k r_0^{k-1}x_0\sin\theta_0}{A}(s-s_0)
 {}+o(|s-s_0|),
\]
which proves \eqref{eq:cross-asymptotic}.  Equations
\eqref{eq:x-cross-reg} and \eqref{eq:r-cross-reg} follow by integration of the
first two equations in \eqref{eq:phi-system}.

We next prove the two-point H\"older estimate.  Along the continued solution,
$du/ds=H(x(s),r(s),u(s))$ is continuous.  After reducing the parameter
interval if necessary, it therefore satisfies $|du/ds|\leq M$ for some
constant $M>0$.  Hence
\begin{equation}\label{eq:u-lipschitz}
 |u(s)-u(t)|\leq M|s-t|.
\end{equation}
For arbitrary real numbers $a$ and $b$, let $\alpha=a^{1/k}$ and
$\beta=b^{1/k}$ denote their
real $k$-th roots.  If $\alpha\beta\geq0$, interchange $\alpha$ and $\beta$
if necessary so that $|\alpha|\geq|\beta|$.  Factorisation gives
\[
 \begin{aligned}
 |\alpha^k-\beta^k|
 &=(|\alpha|-|\beta|)
 \sum_{j=0}^{k-1}|\alpha|^{k-1-j}|\beta|^j\\
 &\geq(|\alpha|-|\beta|)^k
 =|\alpha-\beta|^k.
 \end{aligned}
\]
If $\alpha\beta<0$, convexity gives
\[
 |\alpha-\beta|^k
 =(|\alpha|+|\beta|)^k
 \leq2^{k-1}(|\alpha|^k+|\beta|^k)
 =2^{k-1}|\alpha^k-\beta^k|.
\]
Thus, in both cases,
\begin{equation}\label{eq:odd-root-holder}
 |a^{1/k}-b^{1/k}|
 \leq2^{(k-1)/k}|a-b|^{1/k}.
\end{equation}
Since $\phi=u^{1/k}$, equations \eqref{eq:u-lipschitz} and
\eqref{eq:odd-root-holder} give
\[
 |\theta(s)-\theta(t)|
 \leq(2^{k-1}M)^{1/k}|s-t|^{1/k}.
\]
Since the curve is parametrised by arclength,
$d\gamma/ds=(\cos\theta,\sin\theta)$, and the chord-length identity gives
\[
 \left|
 \frac{d\gamma}{ds}(s)-\frac{d\gamma}{ds}(t)
 \right|
 =2\left|\sin\frac{\theta(s)-\theta(t)}2\right|
 \leq|\theta(s)-\theta(t)|,
\]
which proves \eqref{eq:tangent-holder}.  Taking $t=s_0$, the nonzero
coefficient in \eqref{eq:cross-asymptotic}, the chord-length identity, and
$\sin z/z\to1$ show that the left-hand side is asymptotic to a positive
multiple of $|s-s_0|^{1/k}$.  Thus no exponent greater than $1/k$ is possible.

Finally, $du/ds=k\phi^{k-1}\,d\phi/ds$ gives
\eqref{eq:curvature-blowup}.  Since the norm of the
second derivative of an arclength-parametrised plane curve is $|d\theta/ds|$,
local $L^p$ integrability is equivalent to $(k-1)p/k<1$.  At
$p=k/(k-1)$ one obtains
the divergent integral of $|s-s_0|^{-1}$.
\end{proof}

\begin{figure}[htbp]
  \centering
  \includegraphics[width=\textwidth]{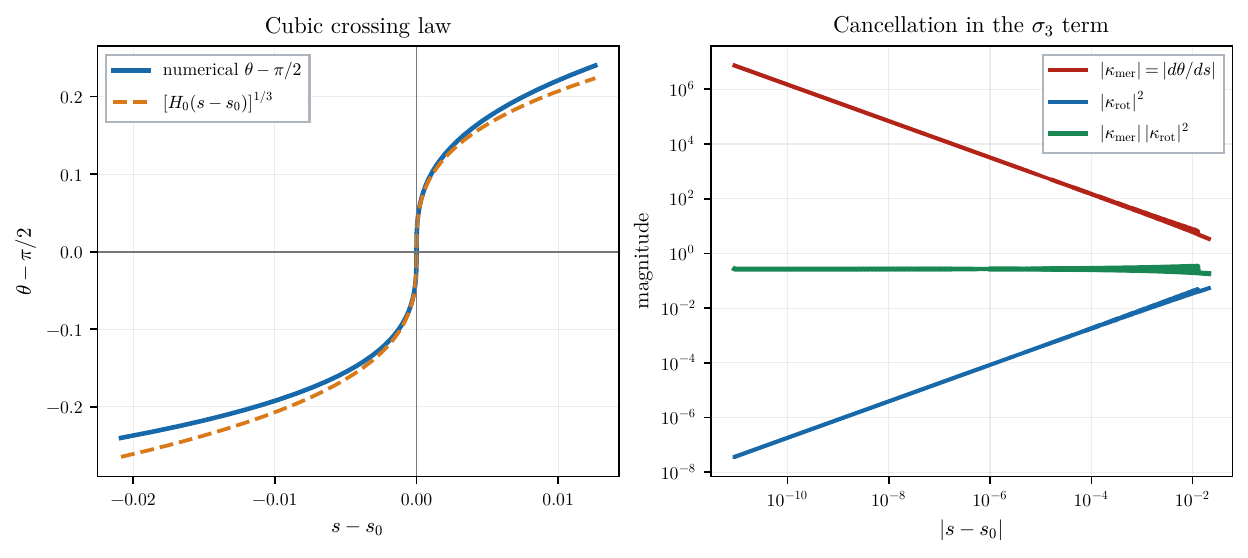}
  \caption{Numerical illustration of the local crossing laws in the special
  case $k=3$ and $n=4$.  The left panel shows
  $\theta-\pi/2\sim[H_0(s-s_0)]^{1/3}$, where
  $H_0:=H(x_0,r_0,0)$.  The right panel shows the blow-up of the meridional
  principal curvature $\kappa_{\mathrm{mer}}=-d\theta/ds$, the vanishing of
  $\kappa_{\mathrm{rot}}^2$, where
  $\kappa_{\mathrm{rot}}=\cos\theta/r$ is the principal curvature in each
  rotational direction, and the bounded product that enters $\sigma_3$.
  This figure visualises the asymptotics proved above and is not used to
  establish them.}
  \label{fig:n4-crossing-scaling}
\end{figure}

The local continuation also depends continuously on the incoming data.

\begin{lemma}\label{lem:crossing-map}
Let
\begin{equation}\label{eq:crossing-compact-data}
 K_0\subset\{(x,r):x>0,\ r>0\}
\end{equation}
be a compact set of crossing data at $\theta=\pi/2$.  There is $u_0>0$ such
that every solution whose data at $u=0$ belong to $K_0$ is represented in the
$u$-chart on the full interval $[-u_0,u_0]$.  On a neighbourhood of the
corresponding incoming data, the map from the section $u=-u_0$ to the section
$u=u_0$, including the elapsed arclength, depends continuously on the
incoming data.
\end{lemma}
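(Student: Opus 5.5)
The plan is to work entirely in the $u$-chart \eqref{eq:u-chart} from the proof of Lemma~\ref{lem:unique-crossing}, with $\theta_0=\pi/2$ and $\varepsilon=1$, and obtain uniformity from compactness of $K_0$.

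\textbf{Uniform neighbourhood.} Since $K_0$ is compact and lies in $\{x>0,r>0\}$, there are $c>0$ and $R<\infty$ with $x\geq 2c$ and $2c\leq r\leq R$ on $K_0$. Let $K_1$ be the closed $c$-neighbourhood of $K_0$, so $x\geq c$ and $r\geq c$ on $K_1$. The function $H$ is continuous, and $H(x,r,0)=kr^{k-1}x/A\geq kc^k/A$ on $K_1$. Uniform continuity on $K_1\times[-1,1]$ then gives $u_1>0$ with $H\geq c_0:=kc^k/(2A)$ on $K_1\times[-u_1,u_1]$. Consequently the right-hand side of \eqref{eq:u-chart} is bounded by $1/c_0$ there. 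It is also continuous in $u$ and Lipschitz in $(x,r)$, uniformly in $u$: the map $(x,r)\mapsto H$ is smooth in $(x,r)$ with coefficients that are bounded continuous functions of $u$, and $1/H$ inherits this because $H\geq c_0$.

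\textbf{Existence on a fixed interval.} Choose $u_0\leq u_1$ so small that $u_0/c_0<c/2$. Fix data $(x_*,r_*)\in K_0$ at $u=0$, with $s=s_*$. By Step~2 of Lemma~\ref{lem:unique-crossing}, the solution of \eqref{eq:u-chart} exists locally. As long as it stays in $K_1$, its $(x,r)$ components move by at most $|u|/c_0<c/2$. A standard continuation argument therefore keeps it in the interior of $K_1$ for all $|u|\leq u_0$, so the solution is defined on the whole of $[-u_0,u_0]$. Since $ds/du=1/H>0$, the map $s(u)$ is a strictly increasing bijection onto an arclength interval. Undoing the reparametrisation, as in Lemma~\ref{lem:unique-crossing}, shows that this is the arclength solution represented on the full $u$-interval. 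By the uniqueness part of that lemma, it is the only nondegenerate crossing with these data.

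\textbf{Continuity of the transition map.} At $u=-u_0$ the incoming data are $(x,r,s)$ with $(x,r)$ in a neighbourhood $U$ of the set of incoming states from $K_0$. For this step I will treat \eqref{eq:u-chart} as a nonautonomous ODE in $(x,r,s)$ that is Lipschitz in the state, uniformly in $u$. Then $(x,r)$ does not depend on $s$, and $s$ enters only additively. Gronwall's inequality gives Lipschitz, hence continuous, dependence of the solution at $u=u_0$ on the data at $u=-u_0$, provided trajectories remain in the region where $H\geq c_0$. The data at $u=0$ are recovered continuously by the same estimate, so a small neighbourhood of the incoming data yields solutions whose $u=0$ data lie within $c/2$ of $K_0$. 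The previous step then applies with $K_0$ replaced by its closed $c/2$-neighbourhood, after shrinking $u_0$ once more. This also gives continuity of the elapsed arclength $s(u_0)-s(-u_0)=\int_{-u_0}^{u_0}du/H$.

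\textbf{Main obstacle.} The delicate point is that the $u$-chart right-hand side is only continuous, not Lipschitz, in $u$. One must therefore check that $u$ is used purely as the time variable, so that Lipschitz dependence in the state variables suffices for both uniqueness and Gronwall's inequality. This works because $u\mapsto(u^{1/k}/\sin u^{1/k})^{k-1}$ and similar quotients are continuous, and $H$ is smooth in $(x,r)$ for each fixed $u$ with derivatives that are continuous in $(x,r,u)$.
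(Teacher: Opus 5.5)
Your proposal is correct and follows essentially the same route as the paper's proof. Both arguments take a compact neighbourhood of $K_0$ on which $H$ is bounded below, choose $u_0$ small relative to the speed bound $1/H$ in the $u$-chart so that solutions launched from $K_0$ stay in that neighbourhood on $[-u_0,u_0]$, apply continuous dependence to a system that is only continuous in $u$ but Lipschitz in the state, and compute the elapsed arclength as $\int_{-u_0}^{u_0}du/H$.

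One small correction: drop the closing ``shrinking $u_0$ once more''. It is unnecessary, because with $u_0/c_0<c/2$ fixed, your Gronwall bound together with a continuation argument already keeps trajectories from a small neighbourhood of the incoming data inside $K_1$ on all of $[-u_0,u_0]$. It is also potentially confusing, since changing $u_0$ after choosing that neighbourhood would move the incoming section.
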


\begin{proof}
At $u=0$, equation \eqref{eq:H-def} gives
\[
 H(x,r,0)=\frac{k r^{k-1}x}{A}
 \geq \min_{K_0}\frac{k r^{k-1}x}{A}>0
\]
on $K_0$.  Choose an open neighbourhood $K$ of $K_0$ whose closure is a
compact subset of $\{x>0,r>0\}$.  By continuity and compactness, after
shrinking $K$ if necessary, there are $u_0>0$ and $h_0>0$ such that
\[
 H(x,r,u)\geq h_0
 \qquad ((x,r)\in\overline K,\ |u|\leq u_0).
\]
The right-hand side of \eqref{eq:u-chart} is uniformly bounded on
$\overline K\times[-u_0,u_0]$.  Since $K_0$ has positive distance from
$\partial K$, a further uniform reduction of $u_0$ ensures that every
solution with initial state in $K_0$ at $u=0$ remains in $K$ throughout
$[-u_0,u_0]$.

On the fixed product set $\overline K\times[-u_0,u_0]$, the right-hand side
of \eqref{eq:u-chart} is
continuous in $u$ and uniformly locally Lipschitz in $(x,r,s)$.
The flow from $u=0$ to $u=-u_0$ maps $K_0$ to a compact set of incoming
data.  Standard continuous dependence gives a neighbourhood of that compact
set on which solutions exist throughout $[-u_0,u_0]$ and depend continuously
on their incoming data.  Evaluation at $u=u_0$ gives continuity of the
outgoing $(x,r)$ data.  The elapsed arclength is
\[
 s(u_0)-s(-u_0)
 =\int_{-u_0}^{u_0}\frac{du}{H(x(u),r(u),u)},
\]
and is continuous by the same solution dependence and the common lower
bound $H\geq h_0$.
\end{proof}

\begin{remark}
The map $\phi\mapsto u=\phi^k$ is a desingularisation, not a smooth change of
coordinates: its inverse is not $C^1$ at the origin.  The nonzero value of
$du/ds(s_0)$ is what makes the continuation unique while allowing the curvature
to blow up.
\end{remark}

\section{Small-radius shooting orbits}\label{sec:small}

For $0<\delta<r_{n,k}$, let $(x_\delta,r_\delta,\theta_\delta)$ denote the
solution with initial data
\begin{equation}\label{eq:small-IVP}
 x_\delta(0)=0,\qquad r_\delta(0)=\delta,\qquad
 \theta_\delta(0)=0.
\end{equation}
Since
\begin{equation}\label{eq:initial-curvature}
 \frac{d\theta_\delta}{ds}(0)=\frac{B-\delta^{k+1}}{A\delta}
 =\frac{n-k}{k\delta}+O(\delta^k),
\end{equation}
the natural initial scale is $s=\delta\tau$.

\subsection{The scaled limit}

Put
\begin{equation}\label{eq:scaled-variables}
 x_\delta(\delta\tau)=\delta\xi(\tau),\qquad
 r_\delta(\delta\tau)=\delta\rho(\tau),\qquad
 \theta_\delta(\delta\tau)=\vartheta(\tau),
\end{equation}
and set
\begin{equation}\label{eq:lambda}
 \lambda=\frac BA=\frac{n-k}{k}>0.
\end{equation}
Here $\tau$ is scaled arclength, while $\xi$, $\rho$, and $\vartheta$ are,
respectively, the scaled axial coordinate, scaled radius, and tangent angle.
The scaled equations are
\begin{equation}\label{eq:scaled-system}
 \frac{d\xi}{d\tau}=\cos\vartheta,\qquad
 \frac{d\rho}{d\tau}=\sin\vartheta,\qquad
 \frac{d\vartheta}{d\tau}=\lambda\frac{\cos\vartheta}{\rho}
 +\frac{\delta^{k+1}\rho^{k-1}}{A\cos^{k-1}\vartheta}
 (\xi\sin\vartheta-\rho\cos\vartheta).
\end{equation}

The first scaled estimate compares the small-radius solutions with the
limiting system on a fixed radial section.

\begin{lemma}\label{lem:scaled-asymptotics}
Fix $\rho_*>1$ and define
\begin{equation}\label{eq:limit-data}
 \vartheta_*=\arccos(\rho_*^{-\lambda}),
 \quad
 \tau_*=\int_1^{\rho_*}\frac{d\rho}
 {\sqrt{1-\rho^{-2\lambda}}},
 \quad
 \xi_*=\int_1^{\rho_*}\frac{\rho^{-\lambda}\,d\rho}
 {\sqrt{1-\rho^{-2\lambda}}}.
\end{equation}
Then, as $\delta\downarrow0$,
\begin{align}
 x_\delta(\delta\tau_*)&=\delta\xi_*+O(\delta^{k+2}),\label{eq:x-scaled-asymp}\\
 r_\delta(\delta\tau_*)&=\delta\rho_*+O(\delta^{k+2}),\label{eq:r-scaled-asymp}\\
 \theta_\delta(\delta\tau_*)&=\vartheta_*+O(\delta^{k+1}).
 \label{eq:theta-scaled-asymp}
\end{align}
\end{lemma}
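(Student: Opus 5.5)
The plan is to treat \eqref{eq:scaled-system} as a regular perturbation, of size $\epsilon:=\delta^{k+1}$, of the limiting system
\[
 \frac{d\xi}{d\tau}=\cos\vartheta,\qquad
 \frac{d\rho}{d\tau}=\sin\vartheta,\qquad
 \frac{d\vartheta}{d\tau}=\lambda\frac{\cos\vartheta}{\rho},
\]
with initial data $(\xi,\rho,\vartheta)(0)=(0,1,0)$.

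\textbf{Step 1: solve the limit explicitly.} Along the limit orbit,
\[
 \frac{d}{d\tau}\bigl(\rho^\lambda\cos\vartheta\bigr)
 =\lambda\rho^{\lambda-1}\sin\vartheta\cos\vartheta
 -\rho^\lambda\sin\vartheta\cdot\lambda\frac{\cos\vartheta}{\rho}=0 .
\]
Hence $\rho^\lambda\cos\vartheta\equiv1$, so $\cos\vartheta=\rho^{-\lambda}$. Since $d\vartheta/d\tau>0$ at $\tau=0$, we have $\vartheta\in(0,\pi/2)$ for $\tau>0$, and therefore $d\rho/d\tau=\sqrt{1-\rho^{-2\lambda}}>0$. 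Using $\rho$ as the parameter, the limit solution reaches $\rho=\rho_*$ at exactly $\tau=\tau_*$, with $\xi=\xi_*$ and $\vartheta=\vartheta_*$ as in \eqref{eq:limit-data}. The integrals converge because $1-\rho^{-2\lambda}\sim 2\lambda(\rho-1)$ near $\rho=1$.

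On $[0,\tau_*]$ the limit orbit satisfies $1\le\rho\le\rho_*$ and $\cos\vartheta\ge\rho_*^{-\lambda}>0$. It therefore lies in a compact set $\mathcal K$ where $\rho\ge 1/2$, $\cos\vartheta\ge\rho_*^{-\lambda}/2$, and $|\xi|$ is bounded.

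\textbf{Step 2: regular perturbation.} On a neighbourhood of $\mathcal K$, the right-hand side of \eqref{eq:scaled-system} has the form $F_0(\xi,\rho,\vartheta)+\epsilon G(\xi,\rho,\vartheta)$. Here $F_0$ is smooth, and
\[
 G=\frac{\rho^{k-1}}{A\cos^{k-1}\vartheta}\,(\xi\sin\vartheta-\rho\cos\vartheta)
\]
is smooth and bounded there, since $\cos\vartheta$ is bounded below. By smooth dependence of ODE solutions on parameters (Teschl), or directly by Gronwall with a Lipschitz constant $L$ for $F_0$ on the neighbourhood, we get
\[
 |Y_\delta(\tau)-Y_0(\tau)|\le \epsilon\,\|G\|_\infty\,\tau\,e^{L\tau}=O(\delta^{k+1})
 \qquad(\tau\in[0,\tau_*]).
\]
This holds as long as $Y_\delta$ stays in the neighbourhood, and a continuity (bootstrap) argument shows it does for small $\delta$. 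The initial data coincide exactly, and at $\tau=0$ the term $\cos\vartheta=1$ causes no singularity.

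\textbf{Step 3: undo the scaling.} Since $\theta_\delta(\delta\tau_*)=\vartheta(\tau_*)$, \eqref{eq:theta-scaled-asymp} follows immediately. Since $x_\delta=\delta\xi$ and $r_\delta=\delta\rho$, the $O(\delta^{k+1})$ errors in $\xi$ and $\rho$ become $O(\delta^{k+2})$, which gives \eqref{eq:x-scaled-asymp} and \eqref{eq:r-scaled-asymp}.

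\textbf{Main obstacle.} The only real point is keeping the perturbed orbit away from $\cos\vartheta=0$, where $G$ blows up. This is why $\rho_*$ is fixed, so that $\vartheta_*<\pi/2$ strictly, and why the bootstrap in Step 2 is needed. Uniformity near $\tau=0$ is automatic because $\rho(0)=1$.
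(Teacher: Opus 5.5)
Your proposal is correct and follows essentially the same route as the paper. Both arguments solve the limiting system through the first integral $\cos\vartheta=\rho^{-\lambda}$, run a Gronwall comparison with a first-exit (bootstrap) argument on a compact neighbourhood of the limit orbit where $\rho\geq1/2$ and $\cos\vartheta$ is bounded below, and then multiply by $\delta$ to undo the scaling. Checking that $\rho^\lambda\cos\vartheta$ is constant directly in $\tau$ is slightly cleaner than the paper's $d/d\rho$ computation, because $d\rho/d\tau=0$ at the initial point.
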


\begin{proof}
On compact subsets on which $\cos\vartheta$ is bounded away from zero,
\eqref{eq:scaled-system} converges in $C^1$ to
\begin{equation}\label{eq:limit-system}
 \frac{d\xi}{d\tau}=\cos\vartheta,\qquad
 \frac{d\rho}{d\tau}=\sin\vartheta,\qquad
 \frac{d\vartheta}{d\tau}=\lambda\frac{\cos\vartheta}{\rho}.
\end{equation}
Along its solution through $(0,1,0)$,
\[
 \frac{d}{d\rho}\log(\cos\vartheta)=-\frac\lambda\rho,
 \qquad\text{hence}\qquad
 \cos\vartheta=\rho^{-\lambda}.
\]
Integration gives exactly \eqref{eq:limit-data}.  The limiting orbit on
$[0,\tau_*]$ satisfies
\[
 1\leq\rho\leq\rho_*,
 \qquad
 \cos\vartheta\geq \rho_*^{-\lambda}=:c_*>0.
\]
Choose a compact tubular neighbourhood $K$ of this orbit on which
$\rho\geq1/2$ and $\cos\vartheta\geq c_*/2$.  If $F_\delta$ and $F_0$
denote the vector fields in \eqref{eq:scaled-system} and
\eqref{eq:limit-system}, respectively, then on $K$
\[
 \|F_\delta-F_0\|\leq C\delta^{k+1},
 \qquad
 \|D F_0\|\leq L.
\]
Here $DF_0$ is the Jacobian of $F_0$ with respect to the state variables, and
the displayed norms are the corresponding Euclidean and operator norms.
Let $\tau_{\mathrm e}$ be the first exit parameter of the $\delta$-orbit from
$K$, truncated at $\tau_*$.  Since the two solutions have the same initial
data, the integral equations give, for $0\leq\tau\leq\tau_{\mathrm e}$,
\[
 |Y_\delta(\tau)-Y_0(\tau)|
 \leq C\delta^{k+1}\tau+
 L\int_0^\tau|Y_\delta(\sigma)-Y_0(\sigma)|\,d\sigma,
\]
where $Y_\delta=(\xi_\delta,\rho_\delta,\vartheta_\delta)$ is the perturbed
scaled solution and $Y_0=(\xi_0,\rho_0,\vartheta_0)$ is the limiting scaled
solution.  The integral form of Gronwall's inequality yields
\[
 \sup_{0\leq\tau\leq\tau_{\mathrm e}}
 |Y_\delta(\tau)-Y_0(\tau)|\leq C_*\delta^{k+1}.
\]
For sufficiently small $\delta$ this is smaller than the distance from the
limiting orbit to $\partial K$, so $\tau_{\mathrm e}=\tau_*$.  Evaluating at
$\tau_*$ gives an $O(\delta^{k+1})$ error in all three scaled variables.
Multiplication by $\delta$ in the first two components of
\eqref{eq:scaled-variables} proves
\eqref{eq:x-scaled-asymp}--\eqref{eq:theta-scaled-asymp}.
\end{proof}

\subsection{Passage through the singular angle}

Define
\begin{equation}\label{eq:Q-def}
 Q=x\sin\theta-r\cos\theta.
\end{equation}
Here $X$ is the position vector of the rotational immersion, so
$Q=\langle X,\nu\rangle$ is its support function.  Isolating it separates
the geometric support term from the
purely rotational degree-$k$ term in the angular equation; its derivative also
has the simple sign identity below.
Whenever $0<\theta<\pi/2$ and $d\theta/ds>0$, direct differentiation gives
\begin{equation}\label{eq:Q-prime}
 \frac{dQ}{ds}=\frac{d\theta}{ds}
 (x\cos\theta+r\sin\theta)>0.
\end{equation}

The support-function identity forces every sufficiently small orbit to reach
the singular angle.

\begin{lemma}\label{lem:small-crosses}
For all sufficiently small $\delta>0$, the orbit
\eqref{eq:small-IVP} reaches $\theta=\pi/2$ at a finite arclength parameter
$s_\delta^+$.  Before that parameter value,
\[
 0<\theta_\delta<\pi/2,\qquad x_\delta>0,\qquad r_\delta>0.
\]
Moreover, for a constant $C=C(n,k)$,
\begin{equation}\label{eq:crossing-bounds}
 r_\delta(s_\delta^+)\leq C,
 \qquad
 x_\delta(s_\delta^+)\longrightarrow0.
\end{equation}
The crossing is nondegenerate.
\end{lemma}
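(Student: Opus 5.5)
Fix $\rho_*>1$ large, to be chosen depending only on $(n,k)$, and apply Lemma~\ref{lem:scaled-asymptotics}.
At $s=\delta\tau_*$ this gives $0<\theta_\delta\approx\vartheta_*<\pi/2$, $x_\delta\approx\delta\xi_*>0$ and $r_\delta\approx\delta\rho_*$.
On $[0,\delta\tau_*]$ the orbit stays in the tubular neighbourhood used in that proof, so there $0\le\theta_\delta<\pi/2$, $r_\delta>0$ and $x_\delta>0$ for $s>0$.
Near $s=0$ the last claim holds because $dx/ds=\cos\theta>0$.

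\textbf{Continuation beyond the scaled section.} Let $s_\delta^+$ be the first parameter after $\delta\tau_*$ at which $\theta_\delta\in\{0,\pi/2\}$ or $r_\delta=0$. On the interval before $s_\delta^+$:
\begin{itemize}
\item $dx/ds=\cos\theta>0$ and $dr/ds=\sin\theta>0$, so $x$ and $r$ increase and stay positive.
\item The angular equation reads $A(\cos\theta/r)^{k-1}\,d\theta/ds=Q+B(\cos\theta/r)^k$.
\end{itemize}
At $\delta\tau_*$ the term $B(\cos\theta/r)^k$ is of order $\delta^{-k}\rho_*^{-(\lambda+1)k}$ while $|Q|=O(\delta)$, so $d\theta/ds>0$ there.

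I will show that $d\theta/ds>0$ persists, using \eqref{eq:Q-prime}. As long as $d\theta/ds>0$, $Q$ increases, so $Q\ge Q(\delta\tau_*)=\delta(\xi_*\sin\vartheta_*-\rho_*\cos\vartheta_*)+O(\delta^{k+2})$.
If $\rho_*$ is chosen so that $\xi_*\sin\vartheta_*>\rho_*^{1-\lambda}$, then $Q(\delta\tau_*)>0$. This holds for large $\rho_*$ when $\lambda\ge1$. For $\lambda<1$, $\xi_*$ grows like $\rho_*^{1-\lambda}/(1-\lambda)>\rho_*^{1-\lambda}$, so it also holds. In both cases the choice is possible.
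Then the right-hand side of the angular equation is strictly positive whenever $d\theta/ds$ would vanish, so $d\theta/ds>0$ on the whole interval. Hence $\theta$ cannot return to $0$, and $s_\delta^+$ is a time where $\theta\to\pi/2$ or an escape time.

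\textbf{Finite arclength.} From $Q>c\delta>0$ I get $d\theta/ds\ge A^{-1}Q\,(r/\cos\theta)^{k-1}\ge c\delta\, r^{k-1}/A$, and $r\ge\delta\rho_*/2$. So $\theta$ increases at rate at least $c'\delta^k$, and reaches $\pi/2$ within arclength $O(\delta^{-k})$. This bound is too crude for the estimates on $r$ and $x$, so I will use a better one.

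\textbf{Bounds $r\le C$ and $x\to0$ (main obstacle).} The positivity of $Q$ gives $x\sin\theta>r\cos\theta$, so $\cos\theta<x/r$. Since $r$ and $x$ are parametrised by the monotone angle, I compare $dx=\cot\theta\,dr$. The plan is:
\begin{itemize}
\item Consider the quantity $E=(\cos\theta/r)^k$-type. The angular equation gives $A(\cos\theta/r)^{k-1}d\theta/dr\ge B(\cos\theta/r)^k/\sin\theta$, which is the limit-system inequality $d\log\cos\theta/d\log r\le-\lambda$ once $Q\ge0$. Hence $\cos\theta\le(\delta\rho_*/r)^{\lambda}(1+o(1))$ persists past $\delta\tau_*$.
\item Then $x(s^+)=x(\delta\tau_*)+\int\cot\theta\,dr\le\delta\xi_*+\int_{\delta\rho_*}^{r^+}C(\delta\rho_*/r)^\lambda\,dr$, integrated in $r$.
\end{itemize}
This integral is small only if $r^+$ is bounded. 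For that I use the $Q$ term: once $r\ge1$, the term $Q\ge0$ together with $r\cos\theta\le\ldots$ gives $d\theta/ds\gtrsim r^{k-1}Q/(\cos\theta)^{k-1}$. Combined with $Q$ increasing and $\cos\theta$ small, this forces $\theta\to\pi/2$ before $r$ exceeds a constant $C(n,k)$. I expect this to need a separate barrier argument, and it is the step I am least sure of. With $r^+\le C$, the integral is $O(\delta^{\min(\lambda,1)}\log(1/\delta))\to0$.

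\textbf{Nondegeneracy.} Finally $x(s^+)>0$ because $x$ is increasing, and $r(s^+)\ge\delta\rho_*>0$. So \eqref{eq:cross-nondegenerate} holds at $\theta_0=\pi/2$, and Lemma~\ref{lem:unique-crossing} gives a nondegenerate crossing.
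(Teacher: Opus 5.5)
Your qualitative part is sound and is essentially the paper's Step~1: the same positive limiting support value at a fixed section, monotonicity of $Q$ and $\theta$, and nondegeneracy via Lemma~\ref{lem:unique-crossing}. Your inequality $\frac{d}{dr}\log\cos\theta\le-\lambda/r$ is exactly the paper's observation that $W=\rho^\lambda z$ decreases.

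\textbf{The gap.} The uniform radius bound $r_\delta(s_\delta^+)\le C(n,k)$ is the main quantitative content of the lemma, and your proof of $x_\delta(s_\delta^+)\to0$ depends on it. You leave it as a heuristic. The statement ``$Q$ increasing and $\cos\theta$ small forces $\theta\to\pi/2$'' gives nothing uniform in $\delta$. The only lower bound you have is $Q\ge Q(\delta\tau_*)\approx c\delta$, which degenerates as $\delta\to0$. The natural barrier is $\frac{d}{dr}\cos^k\theta=-\frac kA r^{k-1}Q-\frac{kB}{A}\frac{\cos^k\theta}{r}\le-\frac{kc\delta}{A}r^{k-1}$. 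Started at $\delta\tau_*$, where $\cos\theta\approx\rho_*^{-\lambda}$ is of order one, it yields only a crossing radius of order $\delta^{-1/k}$. You must actually check that the smallness of $\cos\theta$ beats the smallness of $Q$.

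\textbf{How to close it.} Your own ingredients suffice, with one quantitative check.
\begin{enumerate}
\item Integrating your inequality gives $\cos\theta(r)\le\cos\theta(r_*)(r_*/r)^\lambda\le2\delta^\lambda r^{-\lambda}$ before the crossing. Here $r_*=r_\delta(\delta\tau_*)\approx\delta\rho_*$ and $\cos\theta(r_*)\approx\rho_*^{-\lambda}$.
\item If the crossing has not occurred by $r=1$, start the barrier there. This gives $\cos^k\theta(r)\le2^k\delta^{k\lambda}-\frac{c\delta}{A}(r^k-1)$.
\item Add the continuation argument of the paper's Step~3: the branch cannot terminate while $\cos\theta>0$ and $r$ is below the barrier radius. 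Then the crossing occurs before $r^k=1+2^kAc^{-1}\delta^{k\lambda-1}$.
\item This is bounded precisely because $k\lambda=n-k\ge1$. That exponent check is the missing idea.
\end{enumerate}
With the radius bound in hand, your $\int\cot\theta\,dr$ estimate does give $x_\delta(s_\delta^+)\to0$.

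\textbf{Comparison with the paper.} Completed this way, your route is simpler. The paper runs to the growing target $\rho_\delta=\delta^{-a}$ and needs the two-sided control $W\approx1$ from $\frac{d}{d\rho}W^k$. It also needs the refined lower bound \eqref{eq:cQ-lower} on $\cQ(\rho_\delta)$ before applying the same barrier. That refinement is unavoidable at that scale when $\lambda<1$, since $\rho_\delta^{-k\lambda}/\delta=\delta^{(\lambda-1)/(1+(k-1)\lambda)}$ is unbounded. Your version needs only the one-sided bound on $\cos\theta$ and the crude bound $Q\gtrsim\delta$. As submitted, however, the step is missing.

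\textbf{Minor point.} On $(0,\delta\tau_*]$ you need $\theta_\delta>0$ strictly, not just $\theta_\delta\ge0$. This follows from $d\vartheta/d\tau\ge\lambda c_*/(2\rho)-O(\delta^{k+1})>0$ in the tubular neighbourhood.
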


\begin{proof}
\medskip\noindent\textbf{Step 1: qualitative crossing.}
Choose a fixed $\rho_0>1$ so large that the limiting quantity
\begin{equation}\label{eq:limit-Q}
 q_{\mathrm{lim}}:=\xi(\rho_0)\sqrt{1-\rho_0^{-2\lambda}}
 -\rho_0^{1-\lambda}
\end{equation}
is positive.  The subscript records that this is the support function of the
limiting scaled orbit at the fixed section $\rho=\rho_0$; it is unrelated to
the later slope lower bound.  This choice is possible because its leading behaviour is a positive
multiple of $\rho^{1-\lambda}$ when $0<\lambda<1$, is $\log\rho-1$ when
$\lambda=1$, and tends to a positive constant when $\lambda>1$.
At the terminal scaled parameter in Lemma~\ref{lem:scaled-asymptotics}, the perturbed
radial coordinate is $\rho_0+O(\delta^{k+1})$.  Since the limiting terminal angle
is strictly positive, $d\rho/d\tau=\sin\vartheta$ has a common positive lower
bound in a neighbourhood of that point.  Consequently the perturbed orbit
meets the exact section $\rho=\rho_0$ at a unique parameter value
$\tau_{0,\delta}=\tau_0+O(\delta^{k+1})$, and continuous dependence on this
$O(\delta^{k+1})$ parameter interval preserves all three
$O(\delta^{k+1})$ scaled errors.
Put $s_0=\delta\tau_{0,\delta}$.  Then
\begin{equation}\label{eq:Q-start}
 Q(s_0)=\delta q_{\mathrm{lim}}+O(\delta^{k+2})
 \geq\frac{q_{\mathrm{lim}}}{2}\delta>0.
\end{equation}
Equations \eqref{eq:Q-prime} and \eqref{eq:profile-system} imply that $Q$
and $\theta$ remain strictly increasing as long as $\theta<\pi/2$: indeed,
$Q>0$ makes the right-hand side of the angular equation positive.  Since
$dr/ds=\sin\theta>0$ there, $r$ is a valid parameter and
\begin{equation}\label{eq:cos-power}
 \frac{d}{dr}\cos^k\theta
 \leq-\frac{kQ(s_0)}A r^{k-1}.
\end{equation}
If the orbit stayed in $0<\theta<\pi/2$, integration would give
\[
 \cos^k\theta(r)
 \leq \cos^k\theta(s_0)
 -\frac{Q(s_0)}A\bigl(r^k-r^k(s_0)\bigr),
\]
whose right-hand side becomes negative at a finite radius.  The solution
cannot terminate earlier while $r$ remains below that radius and
$\theta$ remains in a compact subinterval of $(0,\pi/2)$, by the regular ODE
continuation theorem.  Hence it reaches $\theta=\pi/2$ at a finite arclength
parameter.

\medskip\noindent\textbf{Step 2: comparison up to a growing target scale.}
It remains to obtain bounds uniform as $\delta\to0$.  Let
\begin{equation}\label{eq:rho-delta}
 \rho_\delta=\delta^{-a},\qquad
 a=\bigl(1+(k-1)\lambda\bigr)^{-1}.
\end{equation}
Before the crossing use $\rho$ as independent variable and set
\begin{equation}\label{eq:zQscaled}
 z=\cos\theta_\delta,\qquad
 \cQ=\xi\sqrt{1-z^2}-\rho z,\qquad W=\rho^\lambda z.
\end{equation}
Here $z$ is the horizontal tangent component, so the vertical crossing is
exactly $z=0$; $\cQ=Q/\delta$ is the scale-free support function; and $W$
measures the deviation from the limiting first integral
$z=\rho^{-\lambda}$, for which $W\equiv1$.  We later differentiate $W^k$
because the $k$-th power cancels the apparent $z^{-(k-1)}$ singularity in the equation for
$W$.
The exact scaled equations are
\begin{align}\label{eq:growing-system}
 \frac{d\xi}{d\rho}&=\frac{z}{\sqrt{1-z^2}},
 \qquad
 \frac{dz}{d\rho}=-\lambda\frac z\rho
 -\frac{\delta^{k+1}\rho^{k-1}}{Az^{k-1}}\cQ,
 \notag\\
 \frac{dW}{d\rho}&=-\frac{\delta^{k+1}}{A}
 \rho^{\lambda+k-1}z^{-(k-1)}\cQ.
\end{align}
By \eqref{eq:Q-prime}, $\cQ\geq q_{\mathrm{lim}}/2$.  In particular, the last equation
in \eqref{eq:growing-system} shows that $W$ is decreasing.  Moreover,
Lemma~\ref{lem:scaled-asymptotics} gives
$W(\rho_0)=1+O(\delta^{k+1})$.  Enlarge the already fixed $\rho_0$, if necessary,
so that $2\rho_0^{-\lambda}<1$.  For sufficiently small $\delta$, as long as
the orbit has not crossed $\theta=\pi/2$,
\[
 0<W(\rho)\leq W(\rho_0)\leq2,
 \qquad
 z(\rho)\leq2\rho^{-\lambda}.
\]
Thus $\sin\theta=\sqrt{1-z^2}$ has a fixed positive lower bound.  Since
$0<\cQ=\xi\sqrt{1-z^2}-\rho z\leq\xi$, it follows that
\[
 \xi(\rho)\leq C
 \begin{cases}
 \rho^{1-\lambda},&0<\lambda<1,\\
 \log\rho,&\lambda=1,\\
 1,&\lambda>1,
 \end{cases}
\]
The apparent singular factor $z^{-(k-1)}$ disappears after taking the
$k$-th power of $W$:
\begin{equation}\label{eq:W-power}
 \frac{d}{d\rho}W^k
 =-\frac{k\delta^{k+1}}{A}\rho^{k\lambda+k-1}\cQ.
\end{equation}
Integrating this identity up to any point before the crossing or the target
value $\rho_\delta$ gives
\begin{equation}\label{eq:W-error}
 0\leq W^k(\rho_0)-W^k(\rho)\leq C\delta^{k+1}
 \begin{cases}
 \rho^{k+1+(k-1)\lambda},&0<\lambda<1,\\
 \rho^{2k}\log\rho,&\lambda=1,\\
 \rho^{k(1+\lambda)},&\lambda>1.
 \end{cases}
\end{equation}
At the target scale $\rho=\rho_\delta$ the three upper bounds are respectively
\[
 O\left(\delta^{\frac{k(k-1)\lambda}{1+(k-1)\lambda}}\right),\qquad
 O(\delta^{k-1}|\log\delta|),\qquad
 O\left(\delta^{\frac{1+(k^2-k-1)\lambda}{1+(k-1)\lambda}}\right),
\]
and hence tend to zero.  If the orbit crossed $\theta=\pi/2$ before reaching
$\rho_\delta$, then $W\to0$ at that crossing.  Letting $\rho$ tend to the
crossing value in \eqref{eq:W-error} would give
$W^k(\rho_0)=o(1)$, contradicting $W(\rho_0)=1+O(\delta^{k+1})$.
Consequently the orbit reaches $\rho_\delta$ first.  The same estimate is
uniform on the entire preceding interval and gives
\begin{equation}\label{eq:W-one}
 z=\rho^{-\lambda}(1+\eta_\delta(\rho)),
 \qquad
 \sup_{\rho_0\leq\rho\leq\rho_\delta}
 |\eta_\delta(\rho)|\longrightarrow0
\end{equation}
uniformly up to that point.  It follows that
\begin{equation}\label{eq:cQ-lower}
 \cQ(\rho_\delta)\geq
 \begin{cases}
 c\rho_\delta^{1-\lambda},&0<\lambda<1,\\
 c\log\rho_\delta,&\lambda=1,\\
 c,&\lambda>1.
 \end{cases}
\end{equation}
for a constant $c>0$ independent of $\delta$.  We include the calculation.
Having first fixed $\rho_0$ as above, enlarge it if necessary so that
$2\rho_0^{-\lambda}<1$.  On $[\rho_0,\rho_\delta]$,
\eqref{eq:growing-system} and \eqref{eq:W-one} give
\begin{equation}\label{eq:xi-growing-asymptotic}
 \frac{d\xi}{d\rho}
 =\frac{\rho^{-\lambda}(1+\eta_\delta)}
 {\sqrt{1-\rho^{-2\lambda}(1+\eta_\delta)^2}}
 =\rho^{-\lambda}
 \left(1+O(\|\eta_\delta\|_\infty)+O(\rho^{-2\lambda})\right).
\end{equation}
\medskip\noindent\textbf{Case 1: $0<\lambda<1$.}
Integration yields
\[
 \xi(\rho_\delta)
 =\left(\frac{1}{1-\lambda}
 +o_\delta(1)+o_{\rho_\delta}(1)\right)
 \rho_\delta^{1-\lambda}.
\]
Here the second error includes the fixed lower endpoint and the integral of
$O(\rho^{-3\lambda})$, both of which are
$o(\rho_\delta^{1-\lambda})$.  Since
\[
 \sqrt{1-z^2}=1+o(1),
 \qquad
 \rho_\delta z(\rho_\delta)
 =(1+o(1))\rho_\delta^{1-\lambda},
\]
we obtain
\[
 \cQ(\rho_\delta)
 =\left(\frac{\lambda}{1-\lambda}+o(1)\right)
 \rho_\delta^{1-\lambda},
\]
which gives the first line of \eqref{eq:cQ-lower}.

\medskip\noindent\textbf{Case 2: $\lambda=1$.}
Equation \eqref{eq:xi-growing-asymptotic} instead gives
\[
 \xi(\rho_\delta)=(1+o(1))\log\rho_\delta,
 \qquad
 \rho_\delta z(\rho_\delta)=1+o(1),
\]
and hence the second line.

\medskip\noindent\textbf{Case 3: $\lambda>1$.}
The monotonicity $Q\geq Q(s_0)$ already gives
$\cQ=Q/\delta\geq q_{\mathrm{lim}}/2$, proving the third line.  Thus the constant $c$ is
obtained by first fixing $\rho_0$ and then taking $\delta$ sufficiently small.

\medskip\noindent\textbf{Step 3: force the crossing after the target.}
At $r=\delta\rho_\delta$, put
$r_t=\delta\rho_\delta$, $z_t=z(\rho_\delta)>0$, and
$Q_\delta=\delta\cQ(\rho_\delta)>0$, and write $x_t>0$ for the
$x$-coordinate of this target point.  Thus $Q_\delta$ is not a new function:
it is the support function $Q$ evaluated at the target.  Its purpose is to
pair the quantitative lower bound for $Q$ with the small value of $z_t$ and
thereby obtain a crossing-radius bound uniform in $\delta$.  We first verify that the subsequent
crossing used below actually occurs.  At the target $Q=Q_\delta$.  As long
as $Q>0$ on the regular pre-crossing branch, the angular equation gives
$d\theta/ds>0$; then \eqref{eq:Q-prime} gives $dQ/ds>0$.  Thus $Q$ cannot have a
first return to zero and, in fact, $Q\geq Q_\delta$.  Consequently
\[
 \sin\theta\geq\sqrt{1-z_t^2}>0
\]
throughout this segment.  Direct substitution of the angular equation gives,
at every regular pre-crossing point,
\begin{equation}\label{eq:post-target-power-derivative}
\begin{aligned}
 \frac{d}{dr}z^k
 &=-kz^{k-1}\frac{d\theta}{ds}\\
 &=-\frac{k}{A}r^{k-1}Q-\frac{kB}{A}\frac{z^k}{r}\\
 &\leq-\frac{kQ_\delta}{A}r^{k-1}.
\end{aligned}
\end{equation}
Here the last line uses $Q\geq Q_\delta$ together with
$B>0$, $r>0$, and $z>0$.  Integrating
\eqref{eq:post-target-power-derivative} only to an arbitrary regular
pre-crossing point gives
\begin{equation}\label{eq:post-target-power}
 z^k(r)\leq z_t^k-\frac{Q_\delta}{A}(r^k-r_t^k).
\end{equation}
Set
\[
 r_\sharp^k=r_t^k+\frac{A z_t^k}{Q_\delta}.
\]
This is precisely the radius at which the right-hand side of
\eqref{eq:post-target-power} vanishes; it is introduced only as a finite
barrier forcing $z$ to reach zero.
The branch cannot terminate at a finite parameter while $r<r_\sharp$ and
$z>0$.
Indeed, an infinite endpoint is excluded by the displayed positive lower
bound for $dr/ds=\sin\theta$.  At a finite endpoint, $x$ and $r$ are bounded
with $x\geq x_t>0$ and $r\geq r_t>0$; if $z$ stays away from zero,
ordinary ODE continuation applies, while $z\to0$ is precisely the prescribed
nondegenerate crossing.  Thus, if no crossing occurred first, the branch
would reach $r=r_\sharp$.  If $z\to0$ there, this is already the desired
crossing.  Otherwise, taking the regular-side limit in
\eqref{eq:post-target-power} gives $z^k(r_\sharp)\leq0$, contrary to
$z(r_\sharp)>0$.  The crossing therefore exists and occurs no later than
$r_\sharp$.  Since
$z_t^k=\rho_\delta^{-k\lambda}(1+o(1))$, this proves
\begin{equation}\label{eq:r-cross-bound}
 r_\delta^k(s_\delta^+)
 \leq(\delta\rho_\delta)^k
 +A\frac{\rho_\delta^{-k\lambda}}{Q_\delta}(1+o(1)).
\end{equation}
For $0<\lambda<1$, the last quotient is bounded because
$\delta^{-1}\rho_\delta^{-(1+(k-1)\lambda)}=1$; for $\lambda=1$ it is even
smaller, and for $\lambda>1$ one has
\[
 \delta^{-1}\rho_\delta^{-k\lambda}
 =\delta^{(\lambda-1)/(1+(k-1)\lambda)}\longrightarrow0.
\]
This proves the radius bound.

\medskip\noindent\textbf{Step 4: bounds for the crossing coordinates.}
Before $r=\delta\rho_\delta$, the preceding estimate for $\xi$ gives
$x_\delta=\delta\xi=o(1)$ in each of the three regimes.  Afterwards,
$dx/dr=\cot\theta$.  Since $\theta$ is increasing, $\cot\theta$ is
nonnegative and no larger than its value at
$r=\delta\rho_\delta$; by \eqref{eq:W-one}, that value tends to zero.
The remaining $r$-length is uniformly bounded by
\eqref{eq:r-cross-bound}.  Hence the additional change in $x$ is $o(1)$,
proving
\eqref{eq:crossing-bounds}.  Since $dx/ds>0$ before the crossing,
$x_\delta(s_\delta^+)>0$, and Lemma~\ref{lem:unique-crossing} applies.
\end{proof}

\subsection{Return to the \texorpdfstring{$r$}{r}-axis}

Continue the orbit uniquely through $\pi/2$.  Let $s_1(\delta)$ be the first
positive arclength parameter at which $x=0$, $\theta=0$, or $\theta=\pi$; if
no such event occurs, take the maximal endpoint of the existence interval.
Before $s_1(\delta)$ we have
$x>0$, $0<\theta<\pi$, and $dr/ds>0$.

The following estimate controls both coordinates before the first stopping
event.

\begin{lemma}
\label{lem:small-global-bound}
For all sufficiently small $\delta$ and $0<s<s_1(\delta)$,
\begin{equation}\label{eq:small-global-bounds}
 0<x_\delta(s)\leq x_\delta(s_\delta^+)\longrightarrow0,
 \qquad 0<r_\delta(s)\leq C(n,k).
\end{equation}
\end{lemma}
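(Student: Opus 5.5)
Since $0<\theta<\pi$ on $(0,s_1(\delta))$, the radius is strictly increasing there. The angle $\theta$ is below $\pi/2$ before $s_\delta^+$ and above $\pi/2$ after it, so $dx/ds=\cos\theta$ is positive before the crossing and negative after. Hence $x$ increases up to $s_\delta^+$ and then decreases. With $x>0$ before $s_1(\delta)$ (the definition of the stopping time), this gives $0<x_\delta(s)\le x_\delta(s_\delta^+)$, and the limit $x_\delta(s_\delta^+)\to0$ is \eqref{eq:crossing-bounds}. Before the crossing, $r_\delta\le r_\delta(s_\delta^+)\le C$ by Lemma~\ref{lem:small-crosses}. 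So the whole content of the lemma is a uniform bound for $r$ on the post-crossing segment $(s_\delta^+,s_1(\delta))$.

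\textbf{Post-crossing barrier.} On this segment $\pi/2<\theta<\pi$ and $dr/ds=\sin\theta>0$, so I use $r$ as the parameter and set $w=-\cos\theta\in(0,1)$. The computation that produced \eqref{eq:post-target-power-derivative} is valid at any regular point and does not use the sign of $\cos\theta$. It gives
\[
 \frac{d}{dr}\cos^k\theta=-\frac kA r^{k-1}Q-\frac{kB}{A}\frac{\cos^k\theta}{r}.
\]
Because $k$ is odd, $\cos^k\theta=-w^k$. Multiplying by the integrating factor $r^{k\lambda}$ with $\lambda=B/A$ then yields
\[
 \frac{d}{dr}\bigl(r^{k\lambda}w^k\bigr)=\frac kA r^{k\lambda+k-1}Q .
\]
Next I bound $Q$ from below. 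Here $Q=x\sin\theta-r\cos\theta=x\sin\theta+rw$, and both terms are nonnegative because $x>0$ and $\sin\theta>0$. Therefore $Q\ge rw$.

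\textbf{Closing the inequality.} Put $G=r^{k\lambda}w^k$, so that $w=G^{1/k}r^{-\lambda}$. The identity above becomes the differential inequality
\[
 G'\ge \frac kA r^{k\lambda+k-\lambda}G^{1/k}.
\]
Hence $(G^{(k-1)/k})'\ge\frac{k-1}{A}r^{k\lambda+k-\lambda}$. We have $G(r_c)=0$, where $r_c=r_\delta(s_\delta^+)\le C$ is the crossing radius; the inequality is used on the open regular branch and extended by continuity. Integrating from $r_c$ gives a lower bound of order $r^{k\lambda+k-\lambda+1}-r_c^{k\lambda+k-\lambda+1}$. On the other hand, $w<1$ gives the upper bound $G^{(k-1)/k}\le r^{(k-1)\lambda}$. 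Since $k\lambda+k-\lambda+1>(k-1)\lambda$, these two bounds are incompatible once $r$ exceeds a constant depending only on $n$, $k$, and the uniform bound $C$ for $r_c$. This gives $r_\delta\le C(n,k)$ on the whole post-crossing segment, up to $s_1(\delta)$ or the maximal endpoint.

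\textbf{Main obstacle.} The delicate step is this post-crossing radial bound. Two points need care. First, the odd-power identity must be applied only on the regular branch $\theta\in(\pi/2,\pi)$ and combined with the crossing data through Lemma~\ref{lem:unique-crossing}, where $w\to0$. Second, the bound must hold whichever stopping event defines $s_1(\delta)$, including a possible premature maximal endpoint. I would handle both by proving the inequality pointwise on $(s_\delta^+,s)$ for each $s<s_1(\delta)$.
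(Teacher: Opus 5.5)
Your argument is correct in substance, but one step is asserted without proof: that $\theta$ stays above $\pi/2$ on all of $(s_\delta^+,s_1(\delta))$. Lemma~\ref{lem:unique-crossing} gives this only just after the crossing. Both $x_\delta\le x_\delta(s_\delta^+)$ and your use of $w>0$ depend on it. The paper proves it with a separate barrier: in a strip $0<\phi\le\eta$ the bracket in \eqref{eq:H-def} is positive, so $du/ds>0$ and the orbit cannot return to $\phi=0$. Your own identity supplies this at once, and you should say so explicitly. On a maximal interval $(s_\delta^+,s_2)$ where $\pi/2<\theta<\pi$, you have $Q=x\sin\theta+rw>0$, so $G=r^{k\lambda}w^k$ is strictly increasing there. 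Hence $G$ cannot tend to $0$ at $s_2$. Since $\theta\neq\pi$ before $s_1(\delta)$, it follows that $s_2=s_1(\delta)$.

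For the radial bound you take a different route from the paper. The paper argues by fast turning. For $r\ge R_0=\max\{1,(2B)^{1/(k+1)}\}$, \eqref{eq:small-post-cross-fast-angle} gives $d\theta/ds\ge r^k/(2A)$. Less than $\pi/2$ of angle remains, so once the orbit reaches $\overline R_\delta$ the remaining arclength is at most $A\pi/\overline R_\delta^k$, and $dr/ds\le1$ gives $r\le\overline R_\delta+A\pi/R_0^k$. You instead carry the integrating-factor identity behind \eqref{eq:W-power} to the post-crossing side, using $\cos^k\theta=-w^k$. You then use $Q\ge rw$ to obtain a Bernoulli-type inequality and set the resulting growth against the ceiling $G^{(k-1)/k}<r^{(k-1)\lambda}$. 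Your computation checks out. With no case split it gives the explicit bound $r^{k+1}<r_\delta(s_\delta^+)^{k+1}+B+(k+1)A/(k-1)$ on the whole post-crossing segment, and the same monotone $G$ also gives the one-way crossing. The paper's local turning estimate has a different advantage: it also bounds the post-crossing arclength. That bound is reused as \eqref{eq:post-cross-fast-angle} in Lemma~\ref{lem:endpoint-radius}, where the budget $x_\delta(s_\delta^+)\ge m$ is converted into a bound on the crossing radius. Your identity does not give that arclength bound directly.
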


\begin{proof}
After the nondegenerate crossing, Lemma~\ref{lem:unique-crossing} gives
$\theta>\pi/2$ locally.  This strict inequality persists until the stopping
event.  To see this without assuming monotonicity of $\theta$, put
\[
 \phi=\theta-\pi/2,\qquad u=\phi^k.
\]
Choose $\eta>0$ so small that
\[
 B\sin^{k-1}\eta<\frac12 r^{k+1}(s_\delta^+).
\]
On the post-crossing segment, $r\geq r(s_\delta^+)$ and $x>0$.  Therefore,
whenever $0<\phi\leq\eta$,
\[
 x\cos\phi+r\sin\phi-B\frac{\sin^k\phi}{r^k}
 \geq
 \frac{\sin\phi}{r^k}\bigl(r^{k+1}-B\sin^{k-1}\phi\bigr)>0.
\]
Equation \eqref{eq:H-def} gives $du/ds>0$ in this strip.  The orbit therefore
cannot cross the level $\phi=\eta$ downwards after leaving it, and cannot
return to $\phi=0$.  Hence $\pi/2<\theta<\pi$ before the stop.  It follows
that $dx/ds=\cos\theta<0$ after $s_\delta^+$, while $dx/ds>0$ before it; thus
\[
 0<x_\delta(s)\leq x_\delta(s_\delta^+)
\]
for $0<s<s_1(\delta)$.

For the radial bound, write $\zeta=-\cos\theta\in(0,1)$ on the post-crossing
segment and set $R_0=\max\{1,(2B)^{1/(k+1)}\}$.  Whenever $r\geq R_0$, the angular
equation gives
\begin{equation}\label{eq:small-post-cross-fast-angle}
 \frac{d\theta}{ds}
 =\frac{r^{k-1}}{A\zeta^{k-1}}
 \left(x\sin\theta+r\zeta-B\frac{\zeta^k}{r^k}\right)
 \geq\frac{r^k}{2A\zeta^{k-2}}\geq\frac{r^k}{2A}.
\end{equation}
Put
\[
 \overline R_\delta
 =\max\{R_0,r_\delta(s_\delta^+)\}.
\]
If the orbit never reaches $\overline R_\delta$, there is nothing to prove.
After its first contact with that radius, $r\geq\overline R_\delta$ and the
remaining change in $\theta$ before the stopping event is less than $\pi/2$.
Integration of \eqref{eq:small-post-cross-fast-angle}, first from a point
strictly after the singular crossing if necessary and then by passage to the
limit, shows that the remaining arclength is at most
$A\pi/\overline R_\delta^k$.  Since $0<dr/ds\leq1$,
\[
 r_\delta(s)
 \leq\overline R_\delta+\frac{A\pi}{\overline R_\delta^k}
 \leq\overline R_\delta+\frac{A\pi}{R_0^k}.
\]
Lemma~\ref{lem:small-crosses} completes the proof.
\end{proof}

These bounds force sufficiently small orbits to return to the $r$-axis before
their tangent angle reaches $\pi$.

\begin{lemma}
\label{lem:small-returns}
For every sufficiently small $\delta>0$, $s_1(\delta)<\infty$ and
\begin{equation}\label{eq:small-return-event}
 x_\delta(s_1(\delta))=0,\qquad
 0<\theta_\delta(s_1(\delta))<\pi.
\end{equation}
\end{lemma}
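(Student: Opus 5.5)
The plan is to rule out every alternative stopping event and to show that the existence interval cannot end first. Before $s_1(\delta)$ the orbit stays in the post-crossing region, where Lemma~\ref{lem:small-global-bound} gives
\[
0<x\le x_\delta(s_\delta^+)=o(1),\qquad \pi/2<\theta<\pi,\qquad r\le C(n,k).
\]
The event $\theta=0$ is then impossible. What remains is to show three things: (a) the solution does not stop at a finite parameter for another reason; (b) it does not run for infinite arclength; and (c) the event $\theta=\pi$ does not occur before $x=0$.

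\medskip\noindent\textbf{Arclength bound.} First I would show that $s_1(\delta)$ is finite by bounding the arclength of the post-crossing branch. Set $\zeta=-\cos\theta\in(0,1)$. The angular equation gives
\[
\frac{d\theta}{ds}=\frac{r^{k-1}}{A\zeta^{k-1}}\Bigl(x\sin\theta+r\zeta-B\frac{\zeta^k}{r^k}\Bigr).
\]
The right-hand side is positive as long as $r^{k+1}>B\zeta^{k-1}$. Since $r$ is increasing ($dr/ds=\sin\theta>0$) and $r\ge r_\delta(s_\delta^+)$, this holds once $\zeta$ is small, which is the same strip argument used in Lemma~\ref{lem:small-global-bound}. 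It is not clear it holds globally. The key comparison is with the cylinder. If $r<r_{n,k}$ along a stretch, the term $r\zeta-B\zeta^k/r^k$ can become negative, so $\theta$ can decrease; but a decrease of $\theta$ toward $\pi/2$ is blocked by the $u$-strip argument.

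To get finite arclength I would use $r$ as a parameter instead. Since $r$ is increasing and bounded by $C(n,k)$, the total arclength is $\int dr/\sin\theta$. This is finite unless $\theta$ approaches $\pi$. So the solution can only fail to reach $x=0$ by letting $\theta\to\pi$ or by having $\sin\theta$ degenerate.

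For (a), a finite endpoint with $\theta$ inside a compact subinterval of $(\pi/2,\pi)$, $r$ bounded away from zero and $x>0$ is excluded by regular ODE continuation. Near $\theta=\pi/2$ the $u$-chart of Lemma~\ref{lem:unique-crossing} applies, because $H$ has a fixed sign there.

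\medskip\noindent\textbf{Main obstacle: excluding $\theta\to\pi$ before $x=0$.} Suppose $\theta$ reaches $\pi$ (or tends to $\pi$) while $x>0$. Near $\theta=\pi$ we have $\zeta\approx1$. Because $x=o(1)$ and $r\le C$, the equation reads $d\theta/ds\approx r^{k-1}(r-B/r^k)/A$.

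I would split into two cases according to whether the radius at that point lies above or below $r_{n,k}$.

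In the case $r>r_{n,k}$ near $\theta=\pi$, $\theta$ increases at a rate bounded below. Meanwhile the horizontal distance travelled satisfies $dx/ds=\cos\theta\le-c$ on the portion where $\theta\ge 3\pi/4$, say. The key quantitative point is that $x$ started at $o(1)$, so only an $o(1)$ arclength is available before $x$ hits $0$. I would show that the arclength needed for $\theta$ to go from $3\pi/4$ to $\pi$ is bounded below uniformly. The reason is that $d\theta/ds$ is bounded above, since $r\le C$, $\zeta$ is bounded below there, and $x=o(1)$. Hence $x$ hits $0$ first.

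The remaining danger is the passage from $\theta$ near $\pi/2$ to $3\pi/4$. This must be handled by the same mechanism, using $dx/ds=\cos\theta<0$ once $\theta$ is bounded away from $\pi/2$. I expect the case where $\theta$ lingers near $\pi/2$, or where $r$ lies below $r_{n,k}$ so that $d\theta/ds$ is small, to be the hardest. There I would compare against the cylinder solution $r\equiv r_{n,k}$ and use the radius bound together with monotonicity of $r$.

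The conclusion is that $x$ must vanish at a finite $s_1(\delta)$ with $\theta\in(0,\pi)$.
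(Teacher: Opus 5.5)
Your architecture matches the paper's: an $x$-budget argument excludes a fixed neighbourhood of $\theta=\pi$, then $dr/ds$ bounded below together with $r\le C(n,k)$ gives finite arclength, and regular continuation with the $u$-barrier finishes. As written, however, the proof does not close. You run the decisive budget argument only in the case $r>r_{n,k}$. You defer the cases ``$r<r_{n,k}$'' and ``$\theta$ lingering near $\pi/2$'' to a comparison with the cylinder, and you flag these as the hardest part. That comparison is never specified, so this is a genuine gap. The lower bound on $d\theta/ds$ that you invoke for $r>r_{n,k}$ plays no role in the argument.

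The missing observation is that your own estimate does not depend on the case. On the post-crossing segment, with $\zeta=-\cos\theta$,
\[
 \frac{d\theta}{ds}
 =\frac{r^{k-1}}{A\zeta^{k-1}}\,(x\sin\theta+r\zeta)-\frac{B\zeta}{Ar}
 \le\frac{2^{(k-1)/2}C^{k-1}(1+C)}{A}=:M
\]
whenever $3\pi/4\le\theta\le5\pi/6$, $r\le C$ and $0<x\le1$, because the last term is nonpositive. Neither the position of $r$ relative to $r_{n,k}$ nor any lower bound on $r$ enters.

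Suppose the orbit reached $\theta=5\pi/6$ at some $s_c\le s_1(\delta)$. Let $s_b$ be the last time before $s_c$ with $\theta=3\pi/4$. Then $s_c-s_b\ge\pi/(12M)$ and $dx/ds\le-1/\sqrt2$ on $[s_b,s_c]$. Hence $x$ drops by at least $\pi/(12\sqrt2M)$, contradicting $0\le x\le x_\delta(s_\delta^+)\to0$. End the window below $\pi$, as here, so that an asymptotic approach to $\pi$ is also excluded. A slow angle, which is what worried you when $r<r_{n,k}$, only costs more $x$.

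Once $\theta<5\pi/6$, lingering near $\pi/2$ is harmless. The bounds $dr/ds>1/2$ and $r\le C$ bound the arclength. The $u$-barrier, $r\ge r_\delta(s_\delta^+)>0$ and $\theta<5\pi/6$ keep the orbit in a compact regular set. Continuation therefore forces a stopping event, which can only be $x=0$ with $\pi/2<\theta<5\pi/6$.

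This reproduces the paper's Steps~1--3 with one simplification. The paper uses a two-sided bound $|d\theta/ds|\le M$ on a compact regular set, so it must first prove $r\ge c_0$ at the contact with $\theta=3\pi/4$ via \eqref{eq:ra-lower}. Your one-sided bound makes that step unnecessary. This is worth having: for $\lambda>1$, \eqref{eq:r-cross-bound} shows the crossing radius tends to zero, so no uniform lower bound on $r$ is available for free.
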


\begin{proof}
\medskip\noindent\textbf{Step 1: exclude entry into a fixed neighbourhood of
$\theta=\pi$.}
Suppose, for contradiction, that there is a sequence $\delta_j\downarrow0$
for which the orbit does not return as asserted.  Suppress the sequence index,
set $\varepsilon_\delta:=x_\delta(s_\delta^+)$, and write
$0<x\leq\varepsilon_\delta\to0$ and $r\leq C$, using
Lemma~\ref{lem:small-global-bound}.  On the post-crossing segment set
$\alpha=\pi-\theta$.

Suppose the orbit reaches $\alpha=\pi/6$.  At its first earlier contact with
$\alpha=\pi/4$, say at parameter value $s_a$ and radius $r_a$, one has
$\frac{d\alpha}{ds}(s_a)\leq0$, hence
$\frac{d\theta}{ds}(s_a)\geq0$.  Substitution of
$\theta=3\pi/4$ in the angular equation gives
\begin{equation}\label{eq:ra-lower}
 B\leq2^{(k-1)/2}\bigl(r_a^{k+1}+r_a^k x(s_a)\bigr),
\end{equation}
so $r_a\geq c_0>0$ for small $\delta$.  Let $s_b$ be the last contact with
$\alpha=\pi/4$ before the first contact $s_c$ with $\alpha=\pi/6$.  On
$[s_b,s_c]$,
\[
 \frac{\pi}{6}\leq\alpha\leq\frac{\pi}{4},\qquad
 c_0\leq r\leq C,\qquad 0<x\leq1.
\]
This is a compact subset of the regular ODE region, so $|d\theta/ds|\leq M$
there with $M$ independent of $\delta$.  Since the net change of $\theta$
is $\pi/12$,
\[
 s_c-s_b\geq\frac{\pi}{12M}.
\]
Moreover, $dx/ds=-\cos\alpha\leq-1/\sqrt2$ on this interval, forcing $x$ to
decrease by at least $\pi/(12\sqrt2 M)$, contrary to
$x\leq\varepsilon_\delta\to0$.  Thus the orbit cannot reach
$\alpha=\pi/6$ before returning, and consequently
\begin{equation}\label{eq:r-speed-lower}
 \theta\leq5\pi/6,\qquad \frac{dr}{ds}=\sin\theta\geq1/2.
\end{equation}
\medskip\noindent\textbf{Step 2: force a finite return to the $r$-axis.}
If the orbit never returned, \eqref{eq:r-speed-lower} would contradict the
uniform bound on $r$.

\medskip\noindent\textbf{Step 3: exclude termination before a stopping event.}
It remains to exclude termination at a finite parameter before any stopping
event occurs.  For each fixed orbit, the positive $u$ barrier established in
Lemma~\ref{lem:small-global-bound} prevents a return to the singular section
once the orbit has left a sufficiently small crossing neighbourhood.
Together with the uniform bounds on $x$ and $r$ and
\eqref{eq:r-speed-lower}, the orbit then remains in a compact subset of the
regular ODE region until either $x=0$ or a recorded angular event occurs.
The continuation theorem excludes a finite maximal parameter endpoint without such an
event.  Finally, \eqref{eq:r-speed-lower} excludes $\theta=\pi$ first.
Hence \eqref{eq:small-return-event} holds.
\end{proof}

\section{The shooting set and global compactness}\label{sec:compactness}

If some orbit already reaches $x=0$ and $\theta=\pi$ simultaneously, the
desired half-profile curve has been found.  We therefore assume that no such orbit
exists and define the strict shooting set
\begin{equation}\label{eq:shooting-set}
 \sS=\left\{\delta\in(0,r_{n,k}):
 s_1(\delta)<\infty,\ x_\delta(s_1(\delta))=0,\
 \theta_\delta(s_1(\delta))<\pi\right\}.
\end{equation}
Lemma~\ref{lem:small-returns} shows that $\sS\neq\varnothing$.  Set
\begin{equation}\label{eq:delta-star}
 \delta_*=\sup\sS.
\end{equation}
We do not assume that $\sS$ is an interval or that $\delta_*\in\sS$.

\begin{figure}[htbp]
  \centering
  \includegraphics[width=0.96\textwidth]{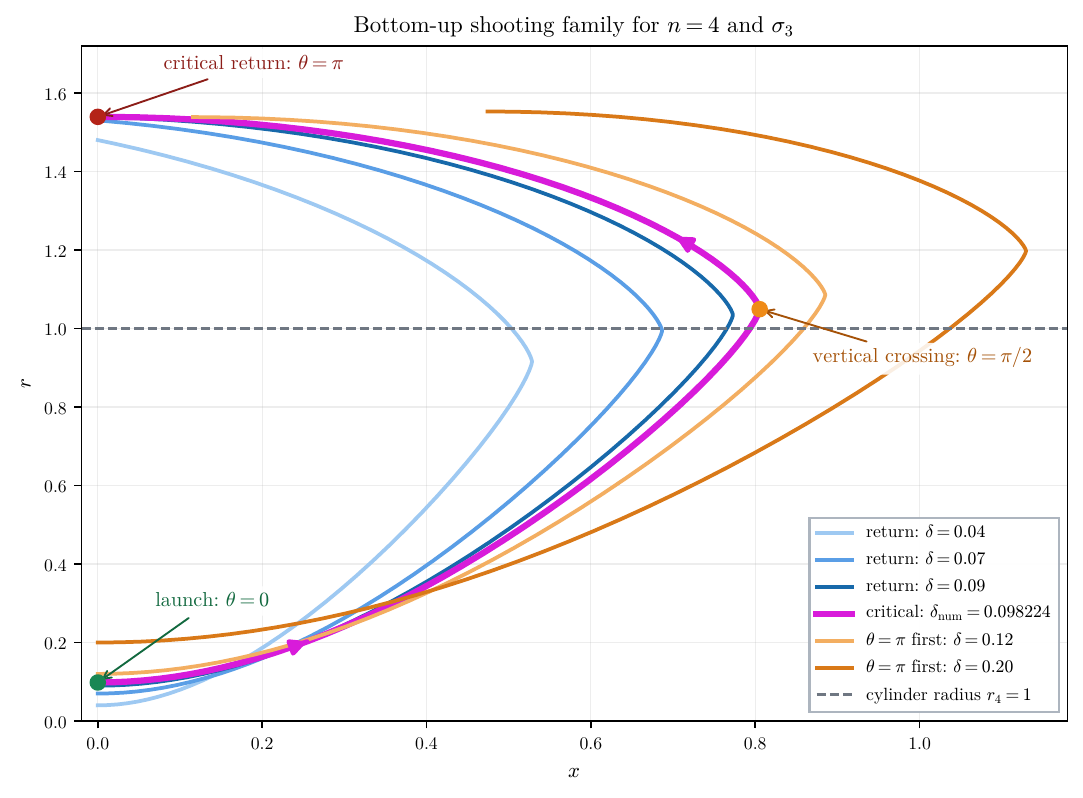}
  \caption{Numerical illustration of the bottom-up shooting family in the
  special case $k=3$ and $n=4$.  Each displayed profile curve is the
  $(x,r)$-projection of a solution orbit with initial state
  $(0,\delta,0)$, and the arrows indicate increasing arclength.  Blue profile
  curves return to $x=0$ before $\theta=\pi$, orange profile curves reach
  $\theta=\pi$ while $x>0$, and the magenta profile curve approximates the
  projection of the separating orbit at
  $\delta_{\mathrm{num}}\approx0.0982244941$.  The numerical value and the
  displayed profile curves are not used in the proof.}
  \label{fig:n4-shooting-family}
\end{figure}

We first record a consequence of the crossing analysis that will be used
throughout this section.  If $\delta\in\sS$, then the orbit must cross
$\theta=\pi/2$: otherwise $dx/ds=\cos\theta$ could not change sign and the
orbit could not return to $x=0$.  At its first crossing, $x>0$ and $r>0$, so
Lemma~\ref{lem:unique-crossing} applies.  This crossing is one-way.  Indeed,
with $\phi=\theta-\pi/2$ and $u=\phi^k$, choose $\eta>0$ so small that
\[
 B\sin^{k-1}\eta<\frac12 r^{k+1}(s_\delta^+).
\]
Since $r$ is increasing and $x>0$ before the terminal event, for
$0<\phi\leq\eta$ one has
\begin{equation}\label{eq:shooting-positive-u}
 x\cos\phi+r\sin\phi-B\frac{\sin^k\phi}{r^k}
 \geq
 \frac{\sin\phi}{r^k}\bigl(r^{k+1}-B\sin^{k-1}\phi\bigr)>0.
\end{equation}
Thus $du/ds>0$ in this strip.  The orbit cannot cross $\phi=\eta$ downwards
and cannot return to $\phi=0$.  Consequently every shooting orbit has a
unique vertical tangent, $x$ increases before it and decreases afterwards,
and its terminal angle belongs to $(\pi/2,\pi)$.

The one-way crossing and continuous dependence imply stability of the return
event under perturbation of the initial radius.

\begin{lemma}\label{lem:S-open}
The set $\sS$ is open in $(0,r_{n,k})$.
\end{lemma}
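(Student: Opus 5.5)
Fix $\delta_0\in\sS$ and write $s_1=s_1(\delta_0)$. The plan is to split the orbit into three pieces and show each is stable under perturbation of $\delta$: the regular pre-crossing segment, the singular crossing, and the regular post-crossing segment up to the return. On each piece we use continuous dependence on initial data, and we use the transversality of the terminal event to keep the return event inside $\sS$.

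\textbf{Pre-crossing segment.} The initial state $(0,\delta,0)$ lies in the regular region, since $r=\delta>0$ and $\cos\theta=1$. Before the crossing the orbit $\delta_0$ satisfies $0<\theta<\pi/2$. Let $s^+$ be its crossing parameter and $(x_0,r_0)$ its crossing data, with $x_0>0$ and $r_0>0$. Take $K_0=\{(x_0,r_0)\}$ in Lemma~\ref{lem:crossing-map}; this gives $u_0>0$. The section $u=-u_0$, that is $\theta=\pi/2-u_0^{1/k}$, is reached at a finite parameter along a compact regular arc. That arc has $r$ bounded below and $\cos\theta$ bounded below, so standard continuous dependence applies. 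For $\delta$ near $\delta_0$, the orbit therefore reaches the same section at a nearby point. It does so transversally, because $d\theta/ds>0$ there: the right-hand side of the angular equation equals $x\sin\theta-r\cos\theta+B(\cos\theta/r)^k$, which is near $x_0>0$ when $\theta$ is close to $\pi/2$. The orbit also stays in $0<\theta<\pi/2$ and $x>0$ before reaching the section. To exclude an earlier event we use uniform closeness on the compact arc, since $\theta$ stays away from $0$ and $\pi/2$ after the initial instant and $\theta'(0)>0$.

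\textbf{Crossing and post-crossing segment.} Lemma~\ref{lem:crossing-map} carries the incoming data at $u=-u_0$ continuously to $u=+u_0$, together with the elapsed arclength. From $u=+u_0$ onward the orbit $\delta_0$ stays in the regular region until $s_1$. The one-way barrier \eqref{eq:shooting-positive-u} gives $\theta\in(\pi/2,\pi)$, and in fact $\theta\geq\pi/2+\eta$ once the orbit leaves the strip. On $[\,s(u_0),s_1]$, $r$ is increasing and positive, and $\theta(s_1)<\pi$. This is a compact arc in the regular region, so continuous dependence gives nearby orbits that are uniformly close up to $s_1+\epsilon$.

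\textbf{Return event.} At $s_1$ we have $dx/ds=\cos\theta(s_1)<0$, so the zero of $x$ is transversal. For nearby $\delta$ the perturbed $x$ therefore changes sign near $s_1$ and stays positive before that, because on the earlier compact arc $x$ is bounded below away from the endpoints. The terminal angle is close to $\theta(s_1)\in(\pi/2,\pi)$. On the whole arc, $\theta$ stays away from $0$ and $\pi$, so no other stopping event occurs first. To apply the same barrier to nearby orbits we choose a uniform $\eta$; this is possible because their crossing radii are close to $r_0$.

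The main obstacle is the bookkeeping at the singular section. Uniform continuous dependence through $\theta=\pi/2$ comes only from Lemma~\ref{lem:crossing-map} in the $u$-chart, so the plan depends on a clean gluing of the three flows at the transversal sections $u=\pm u_0$. We also must make sure the perturbed orbits have no second vertical tangent. The uniform version of the barrier \eqref{eq:shooting-positive-u} handles this.

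Combining the three pieces, $s_1(\delta)<\infty$, $x_\delta(s_1(\delta))=0$ and $\theta_\delta(s_1(\delta))<\pi$ for all $\delta$ near $\delta_0$. Hence $\sS$ is open in $(0,r_{n,k})$.
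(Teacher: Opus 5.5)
Your proposal is correct and follows essentially the same route as the paper: continuous dependence on the regular pre-crossing arc up to the section $u=-u_0$, then the section map of Lemma~\ref{lem:crossing-map} through the vertical tangent, then regular continuous dependence after $u=u_0$. It closes with a transverse sign change of $x$ near $s_1$, with the angle kept in a compact subinterval of $(\pi/2,\pi)$ and the barrier \eqref{eq:shooting-positive-u} excluding a second vertical tangent. State one point explicitly: near $s=0$ both $x$ and $\theta$ start on stopping sets, so positivity of $x$ there comes from $dx/ds(0)=1$, and positivity of $\theta$ from $d\theta/ds(0)>0$, uniformly for nearby $\delta$, exactly as the paper does.
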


\begin{proof}
Fix $\bar\delta\in\sS$.  The corresponding orbit crosses $\pi/2$
nondegenerately.  Choose regular incoming and outgoing points with
$u=-u_0$ and $u=u_0$, and extend the reference orbit slightly beyond its
terminal parameter.  The regular flow from the initial point to the incoming
section, the fixed-section map of Lemma~\ref{lem:crossing-map}, and the
regular flow from the outgoing section to this final parameter value all depend
continuously on $\delta$.  At the reference terminal point,
\[
 x=0,\qquad \pi/2<\theta<\pi,\qquad \frac{dx}{ds}=\cos\theta<0.
\]
Choose parameters $s_-<s_1(\bar\delta)<s_+$ in the short regular extension such
that
\[
 x_{\bar\delta}(s_-)>0,\qquad x_{\bar\delta}(s_+)<0,
\]
while $\theta$ remains in a compact subinterval of $(\pi/2,\pi)$.
Continuous dependence preserves these strict inequalities for all
$\delta$ sufficiently close to $\bar\delta$.  The intermediate value theorem
then gives a zero of $x_\delta$ between the corresponding parameter values.

It remains to exclude an earlier stopping event.  On a common short initial
interval, $d\theta/ds(0)>0$ and $dx/ds(0)=1$ give $x>0$ and $\theta>0$ uniformly for
nearby initial radii.  Remove that interval and small fixed neighbourhoods of
the incoming and outgoing sections.  On each remaining compact piece before
$s_-$, the reference orbit has a positive distance from
\[
 x=0,\qquad \theta=0,\qquad \theta=\pi.
\]
This distance persists under regular continuous dependence; on the crossing
neighbourhood the fixed $u$-chart and
\eqref{eq:shooting-positive-u} give uniform separation from the stopping sets.
Finally, on $[s_-,s_+]$ the angle remains in a compact subinterval of
$(\pi/2,\pi)$ and $dx/ds<0$, so the zero supplied above is unique in that
window.  Thus it occurs before either angular stopping event, and every
sufficiently nearby parameter belongs to $\sS$.
\end{proof}

\subsection{The terminal-radius bound}

For shooting parameters bounded away from zero, the terminal radii are
uniformly bounded.

\begin{lemma}\label{lem:endpoint-radius}
For every $\delta_0\in(0,\delta_*)$ there is
$C_e=C_e(n,k,\delta_0)$ such that
\begin{equation}\label{eq:endpoint-radius-bound}
 r_\delta(s_1(\delta))\leq C_e
 \quad\text{for all }\delta\in\sS\cap[\delta_0,\delta_*).
\end{equation}
\end{lemma}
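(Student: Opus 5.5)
We bound the terminal radius by splitting each shooting orbit at its unique vertical tangent and controlling the two halves separately.

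\textbf{Pre-crossing half.} For $\delta\in\sS$, the discussion preceding Lemma~\ref{lem:S-open} shows that the orbit has a unique vertical tangent at $s_\delta^+$. Before it, $0<\theta<\pi/2$ and $x$ increases; after it, $\pi/2<\theta<\pi$ and $x$ decreases. Throughout, $dr/ds=\sin\theta>0$. Note that $\delta\geq\delta_0$ is needed: for $\delta$ near $0$, the crossing radius is controlled only through Lemma~\ref{lem:small-crosses}, whereas here we want an argument uniform on $[\delta_0,\delta_*)$.

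I would first bound the crossing radius $r_\delta(s_\delta^+)$ uniformly in $\delta\in[\delta_0,\delta_*)$, reusing the support-function mechanism from Step~3 of Lemma~\ref{lem:small-crosses}. At $s=0$ we have $Q=-\delta<0$, so $Q$ is not positive at the start. I would therefore split the pre-crossing segment according to the sign of $Q$.
\begin{enumerate}[label=\textup{(\alph*)}]
  \item \emph{While $Q\leq0$.} The angular equation gives
  \[
   A z^{k-1}r^{-(k-1)}\,\frac{d\theta}{ds}=Q+B\frac{z^k}{r^k},
   \qquad z=\cos\theta,
  \]
  and $Q=x\sin\theta-rz\geq -rz$. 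If $r$ becomes large while $Q\leq0$, then $x\sin\theta\leq rz$. Since $x$ is increasing, this forces $z$ to be bounded below. I would then show that the term $B z^k/r^k$ cannot keep $\theta$ increasing, so $\theta$ must decrease; when $\theta$ reaches $0$, the stopping event $\theta=0$ contradicts $\delta\in\sS$. Equivalently, a regular phase portrait analysis should show that $\{Q\leq0,\ 0<\theta<\pi/2\}$ is left at a bounded radius, otherwise $\theta$ reaches $0$.
  \item \emph{Once $Q>0$.} Here \eqref{eq:Q-prime} shows that $Q$ stays positive and increasing. To get a crossing bound from \eqref{eq:post-target-power}, I need a quantitative lower bound on $Q$ at some point of radius $\geq c(\delta_0)$.
\end{enumerate}
I expect this quantitative lower bound to be the main obstacle. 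What I would try first is a contradiction-compactness argument. Take a sequence $\delta_j\in\sS\cap[\delta_0,\delta_*)$ with crossing radii tending to infinity, and pass to a limit $\delta_j\to\bar\delta\in[\delta_0,\delta_*]$ using continuous dependence on compact regular pieces. The limiting orbit is then a regular solution defined for all large $r$ with $0<\theta<\pi/2$. On such an orbit $z^k$ satisfies the differential inequality \eqref{eq:post-target-power-derivative} once $Q>0$, which forces a crossing at finite radius. If instead $Q\leq0$ forever, $\theta$ is driven to $0$. Both alternatives give a contradiction, and the finite-radius crossing of the limit bounds the radii along the sequence by continuous dependence.

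\textbf{Post-crossing half.} Here I would copy the second half of the proof of Lemma~\ref{lem:small-global-bound} verbatim. Set $\zeta=-\cos\theta\in(0,1)$. For $r\geq R_0=\max\{1,(2B)^{1/(k+1)}\}$, using $x>0$ and $\sin\theta>0$, \eqref{eq:small-post-cross-fast-angle} gives $d\theta/ds\geq r^k/(2A)$. The remaining angular budget is less than $\pi/2$, so the remaining arclength after $r$ reaches $\overline R=\max\{R_0,r_\delta(s_\delta^+)\}$ is at most $A\pi/\overline R^{\,k}$. This yields
\[
 r_\delta(s_1(\delta))\leq \max\{R_0,\ r_\delta(s_\delta^+)\}+A\pi R_0^{-k},
\]
and combining with the pre-crossing bound gives $C_e$.
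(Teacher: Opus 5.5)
Your post-crossing half is correct. It is the second half of Lemma~\ref{lem:small-global-bound}, and it gives $r_\delta(s_1(\delta))\le\max\{R_0,r_\delta(s_\delta^+)\}+A\pi R_0^{-k}$. The gap is that the uniform bound on the crossing radius $r_\delta(s_\delta^+)$ is never established, and the route you sketch for it cannot close. You try to extract that bound from pre-crossing information alone. In your compactness argument the limit orbit at $\bar\delta$ stays in $0<\theta<\pi/2$ forever. You then rely on the dichotomy ``$Q>0$ somewhere forces a crossing, $Q\le0$ forever drives $\theta$ to $0$''. The second half of that dichotomy is unproved, and there is a genuine third possibility.

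Grant that $\theta$ increases before the crossing; this is Lemma~\ref{lem:h-barrier}, and it passes to the limit. Then \eqref{eq:Q-prime} only gives $Q\uparrow0$, and the angular equation only needs $-Bz^k/r^k\le Q<0$. In the variables of \eqref{eq:q-system}, put $w=\widehat Xq-R$ and $g=R^{-k}(1+q^2)^{-(k-1)/2}$. Then $h=w+g$ and $dh/d\widehat X=\mu h+dg/d\widehat X$. Along $R\approx a\widehat X$ one has $\mu\sim c\,\widehat X^{k}$ and $dg/d\widehat X\sim-c'\widehat X^{-k-1}$. The non-explosive branch $h\sim\widehat X^{-2k-1}>0$, $q\to a>0$, $Q\sim-c''r^{-k}$ is therefore a consistent regime. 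It describes an orbit asymptotic to a cone through the origin that meets neither $\theta=0$ nor $\theta=\pi/2$. Such separatrices are exactly where the crossing radii of neighbouring orbits blow up. Nothing in the pre-crossing dynamics prevents $\bar\delta$ from being one, so the contradiction you want cannot come from the pre-crossing segment.

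The missing ingredient is the return condition $x_\delta(s_1(\delta))=0$ itself. The initial radii lie in the compact interval $[\delta_0,r_{n,k}]$, so uniform local existence gives $\eta>0$ with $\cos\theta_\delta\ge1/2$ on $[0,\eta]$. Hence $x_\delta(s_\delta^+)\ge m:=\eta/2$; this is why $\delta\ge\delta_0$ is needed, not the small-$\delta$ analysis. Now suppose $R_\delta=r_\delta(s_\delta^+)\ge R_0$. After the crossing $r\ge R_\delta$, so $d\theta/ds\ge R_\delta^k/(2A)$. Since the terminal angle is less than $\pi$, the post-crossing arclength is at most $A\pi/R_\delta^k$. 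But returning to $x=0$ with $|dx/ds|\le1$ costs arclength at least $m$. Hence $R_\delta\le(A\pi/m)^{1/k}$, and your post-crossing estimate then finishes the proof. Near-conical orbits that cross at huge radius are excluded precisely this way: they reach $\theta=\pi$ while $x>0$, so they do not lie in $\sS$.
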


\begin{proof}
The initial points with $\delta\in[\delta_0,r_{n,k}]$ form a compact subset of
the regular region.  Uniform local existence gives $\eta>0$ such that
\[
 |\theta_\delta(s)|\leq\pi/3,\qquad
 \frac{dx_\delta}{ds}(s)=\cos\theta_\delta(s)\geq1/2
 \quad (0\leq s\leq\eta)
\]
for every initial radius in this interval.  Every shooting orbit reaches its
unique vertical tangent after this initial interval, and hence
\begin{equation}\label{eq:crossing-x-lower}
 x_\delta(s_\delta^+)\geq m:=\eta/2>0.
\end{equation}

Write $c=-\cos\theta>0$.  If
$r\geq R_0:=\max\{1,(2B)^{1/(k+1)}\}$, then
\begin{align}
 \frac{d\theta}{ds}
 &=\frac{r^{k-1}}{Ac^{k-1}}
 \left(x\sin\theta+rc-B\frac{c^k}{r^k}\right)\notag\\
 &\geq\frac{r^{k-1}}{Ac^{k-1}}\,c\left(r-\frac B{r^k}\right)
 \geq\frac{r^k}{2Ac^{k-2}}\geq\frac{r^k}{2A}.
 \label{eq:post-cross-fast-angle}
\end{align}

Put $R_\delta=r_\delta(s_\delta^+)$.

\medskip\noindent\textbf{Case 1: $R_\delta\geq R_0$.}
Then
$r_\delta(s)\geq R_\delta$ after the crossing.  Since the terminal angle is
less than $\pi$, integration of \eqref{eq:post-cross-fast-angle} gives
\[
 s_1(\delta)-s_\delta^+
 \leq\frac{A\pi}{R_\delta^k}.
\]
On the other hand, the orbit must lose the full amount
$x_\delta(s_\delta^+)$ before returning to $x=0$, while $|dx/ds|\leq1$.  Thus
\[
 m\leq x_\delta(s_\delta^+)
 \leq s_1(\delta)-s_\delta^+
 \leq\frac{A\pi}{R_\delta^k},
\]
and consequently $R_\delta\leq(A\pi/m)^{1/k}$.

\medskip\noindent\textbf{Case 2: $R_\delta<R_0$.}
The radius is bounded by $R_0$ until it first reaches
that value.  From that point onward,
\eqref{eq:post-cross-fast-angle} bounds the remaining arclength by
$A\pi/R_0^k$.  Combining the two cases and using $0<dr/ds\leq1$ gives the
explicit uniform estimate
\[
 r_\delta(s_1(\delta))
 \leq
 \max\left\{R_0,\left(\frac{A\pi}{m}\right)^{1/k}\right\}
 +\frac{A\pi}{R_0^k}.
\]
\end{proof}

\subsection{A strict gap from the cylinder}

Linearisation at the cylindrical solution excludes a full interval of
near-cylinder initial radii from the shooting set.

\begin{lemma}\label{lem:cylinder-gap}
There is $\varepsilon_0>0$ such that
\begin{equation}\label{eq:cylinder-gap}
 (r_{n,k}-\varepsilon_0,r_{n,k})\cap\sS=\varnothing.
\end{equation}
Consequently,
\begin{equation}\label{eq:delta-star-gap}
 0<\delta_*\leq r_{n,k}-\varepsilon_0<r_{n,k}.
\end{equation}
\end{lemma}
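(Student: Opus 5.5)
We linearise at the cylinder, where $\delta$ is close to $r_{n,k}$ and the orbit starts at $(0,\delta,0)$. Write $r=r_{n,k}+w$ and keep $\theta$ small. For $\cos\theta$ near $1$ the angular equation reads
\[
A\frac{d\theta}{ds}\approx r^{k-1}\bigl(x\theta-r\bigr)+\frac{B}{r}+\dots ,
\]
and its linearisation at $(w,\theta)=(0,0)$ is
\[
\frac{dw}{ds}=\theta,\qquad
A r_{n,k}^{k-1}\frac{d\theta}{ds}=-(k+1)w+x\,r_{n,k}^{k-1}\,\theta+\dots .
\]
Here we use $\partial_r\bigl(-r^k+B r^{-1}\bigr)\big|_{r_{n,k}}=-(k+1)r_{n,k}^{k-1}$. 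Since $dx/ds\approx 1$, we may take $x\approx s$, so the linearised equation is
\[
w''-\frac{s}{A}\,w'+\frac{k+1}{A}\,w=0,\qquad w(0)=-\varepsilon,\ w'(0)=0,\qquad \varepsilon=r_{n,k}-\delta .
\]
This is a Hermite-type equation, $w''-a s w'+b w=0$ with $a=1/A$ and $b=(k+1)/A$. Its solution is even, and after the substitution $t=s\sqrt{a/2}$ it becomes a Hermite/Kummer function with index $\nu=b/a=k+1$. Since $k+1$ is even, the even solution $w=-\varepsilon\,p(s)$ is a polynomial of degree $k+1$ in $s$ with positive leading coefficient (after normalising the sign), and it has finitely many zeros.

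The plan has three steps.

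\textbf{Step 1 (linear picture).} Compute the even polynomial solution $p$ of the linearised problem and record that $p(s)\to+\infty$ like $s^{k+1}$. With $w=-\varepsilon p$, the perturbation therefore eventually makes $r<r_{n,k}$ and $w'<0$, so $\theta<0$ at some finite $s_0$ for which $\theta$ and $w$ are nondegenerate: $\theta(s_0)=-\varepsilon c_1$ with $c_1>0$. If $p$ has intermediate sign changes, the relevant fact is the sign at a large fixed time $S_0$, where $p'(S_0)>0$ strictly.

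\textbf{Step 2 (transfer to the nonlinear orbit).} On the fixed interval $[0,S_0]$ the orbit stays in the regular region, because $\theta=O(\varepsilon)$ and $r$ is near $r_{n,k}$. Smooth dependence on $\delta$ then gives
\[
\theta_\delta(s)=\varepsilon\,\partial_\delta\theta+O(\varepsilon^2),
\]
uniformly on $[0,S_0]$. Hence $\theta_\delta(S_0)<0$ for all small $\varepsilon>0$, while $\theta_\delta$ stays in $(-\pi/2,\pi/2)$ and $x_\delta>0$ on $(0,S_0]$.

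Step 1 alone does not finish the argument, because $\theta$ may become negative and later positive again at intermediate times. The definition of $s_1(\delta)$ stops the orbit at the first event among $x=0$, $\theta=0$ and $\theta=\pi$. Since $\theta_\delta(0)=0$ and $d\theta_\delta/ds(0)=(B-\delta^{k+1})/(A\delta)>0$, we have $\theta>0$ initially. The first return to $\theta=0$ therefore occurs before $S_0$, at a positive parameter where $x>0$. This return is itself a stopping event, so $s_1(\delta)$ is attained with $x_\delta(s_1)>0$. That excludes $\delta$ from $\sS$, which requires $x=0$ at the stopping time. In fact this argument only needs the first positive zero of $p'$, i.e.\ of $\theta$ in the linearisation, to exist and be simple; the behaviour of $p$ at large $s$ is not used.

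\textbf{Step 3 (conclusion).} Choose $s_0$ slightly after the first simple zero $s^\ast$ of $p'$ on $(0,\infty)$. Such a zero exists because $p'(s)\sim(-\text{const})\,s$ near $0$, $p'\to+\infty$, and $p'$ is a polynomial, so its first sign change is at an odd-order zero. Then $\theta_\delta(s_0)<0<\theta_\delta$ on an early interval, so $\theta_\delta$ vanishes on $(0,s_0)$ while $x_\delta\geq s_0/2>0$ there. This gives $\delta\notin\sS$ for $\varepsilon<\varepsilon_0$, and \eqref{eq:delta-star-gap} follows with $\delta_*>0$ by Lemma~\ref{lem:small-returns}.

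The main obstacle is justifying the linearisation with the coefficient $x\approx s$ and making the remainder uniform on $[0,S_0]$. I would handle it by differentiating the regular vector field in $\delta$ at $\delta=r_{n,k}$, where the orbit is exactly the cylinder $x=s$, $r=r_{n,k}$, $\theta=0$. The variational equation is then exactly the Hermite equation above, with no approximation, and the $O(\varepsilon^2)$ remainder comes from $C^2$ dependence on a compact regular set.
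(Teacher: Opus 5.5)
Your route is the paper's. The variational equation along the exact cylinder orbit $x=s$, $r=r_{n,k}$, $\theta=0$ is the paper's scaled limit system, and its solution is a rescaled $\operatorname{He}_{k+1}$. A return of $\theta_\delta$ to $0$ at a point with $x_\delta>0$ then excludes $\delta$ from $\sS$. Your intermediate-value transfer, using only $d\theta_\delta/ds(0)>0$ and $\theta_\delta(s_0)<0$, is fine, and slightly lighter than the paper's because the first zero need not be located.

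\textbf{The gap.} It sits in the step that carries the content: the existence of a positive sign change of $p'$. You argue that $p'<0$ near $0^+$ and $p'\to+\infty$. But the normalisation $w(0)=-\varepsilon$ fixes $p(s)=\operatorname{He}_{k+1}(\sqrt{c_{n,k}}\,s)/\operatorname{He}_{k+1}(0)$, so there is no sign left to normalise. Moreover $\operatorname{He}_{k+1}(0)=(-1)^{(k+1)/2}k!!$.
\begin{itemize}
\item For $k\equiv1\pmod 4$ (for instance $k=5$, where $\operatorname{He}_6(0)=-15$) the leading coefficient of $p$ is negative and $p'\to-\infty$.
\item Then $p'$ has the same sign near $0^+$ and at infinity, so your argument produces no zero.
\item For the same reason, the claim in Steps 1--2 that $p'(S_0)>0$ at a large time is false for these $k$.
\end{itemize}
Degree and parity cannot rescue this: $1-s^2-s^4$ is even of degree $4$ with derivative negative on $(0,\infty)$. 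Simplicity of zeros of $p'$ is automatic from the ODE: if $p'=p''=0$ at a point, then $p=0$ there, so $p\equiv0$. Existence, however, needs information specific to Hermite polynomials.

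\textbf{The fix.} The missing input is the one the paper supplies. Since $\frac{d}{dt}\operatorname{He}_m=m\operatorname{He}_{m-1}$, $p'$ is a nonzero multiple of $\operatorname{He}_k(\sqrt{c_{n,k}}\,s)$. The polynomial $\operatorname{He}_k$ has $k$ distinct real zeros, by the interlacing argument from the three-term recurrence. Being odd with $k\geq3$, it has a smallest positive zero, which is simple; this is your $s^\ast$. With that inserted, your Step 3 works for every odd $k\geq3$.

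\textbf{A minor slip.} The linearised angular equation is $A\,d\theta/ds=r_{n,k}^{k-1}\bigl(s\theta-(k+1)w\bigr)$. Hence $a=c_{n,k}=r_{n,k}^{k-1}/A$ and $b=(k+1)c_{n,k}$. Dropping the factor $r_{n,k}^{k-1}$ is harmless, since only $b/a=k+1$ matters.
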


\begin{proof}
\medskip\noindent\textbf{Step 1: derive the exact scaled system.}
Write $\delta=r_{n,k}-\varepsilon$ and introduce
\begin{equation}\label{eq:cylinder-scaled-vars}
 \mathsf X_\varepsilon=x_\delta,\qquad
 \mathsf Y_\varepsilon=\frac{r_\delta-r_{n,k}}{\varepsilon},\qquad
 \widehat\Theta_\varepsilon=\frac{\theta_\delta}{\varepsilon}.
\end{equation}
The sans-serif letters distinguish the axial and radial perturbation variables
from the immersion $X$ and from the normalised coordinates introduced later;
the hat on $\widehat\Theta_\varepsilon$ distinguishes the scaled angle from
the unscaled tangent angle $\theta_\delta$.
In these variables the exact system is
\begin{equation}\label{eq:cylinder-full-scaled-system}
\begin{aligned}
 \frac{d\mathsf X_\varepsilon}{ds}
 &=\cos(\varepsilon\widehat\Theta_\varepsilon),\\
 \frac{d\mathsf Y_\varepsilon}{ds}
 &=\frac{\sin(\varepsilon\widehat\Theta_\varepsilon)}{\varepsilon},\\
 \frac{d\widehat\Theta_\varepsilon}{ds}
 &=\frac{(r_{n,k}+\varepsilon\mathsf Y_\varepsilon)^{k-1}}
 {A\cos^{k-1}(\varepsilon\widehat\Theta_\varepsilon)}
 \left[
 \mathsf X_\varepsilon
 \frac{\sin(\varepsilon\widehat\Theta_\varepsilon)}{\varepsilon}
 +G(\varepsilon,\mathsf Y_\varepsilon,\widehat\Theta_\varepsilon)
 \right],
\end{aligned}
\end{equation}
where
\begin{equation}\label{eq:cylinder-G}
 G(\varepsilon,Y,\Theta)
 :=
 \frac{1}{\varepsilon}
 \left[
 -(r_{n,k}+\varepsilon Y)\cos(\varepsilon\Theta)
 +\frac{r_{n,k}^{k+1}\cos^k(\varepsilon\Theta)}
 {(r_{n,k}+\varepsilon Y)^k}
 \right].
\end{equation}
Denote the bracketed numerator in \eqref{eq:cylinder-G} by
$F(\varepsilon,Y,\Theta)$, so that $G=F/\varepsilon$ for
$\varepsilon\neq0$.
The function $G$ is the angular numerator after subtracting the cylindrical
equilibrium and dividing by the perturbation size $\varepsilon$; introducing
it makes the first-order cylinder defect and its removable limit explicit.
The apparent quotients in \eqref{eq:cylinder-full-scaled-system} are
removable.  Indeed, $\sin(\varepsilon\Theta)/\varepsilon$ extends smoothly
with value $\Theta$ at $\varepsilon=0$.  The numerator defining $G$ is a
smooth function of $(\varepsilon,Y,\Theta)$ that vanishes identically at
$\varepsilon=0$; the integral identity
\[
 \frac{F(\varepsilon,Y,\Theta)}{\varepsilon}
 =\int_0^1
 \frac{\partial F}{\partial\varepsilon}
 (t\varepsilon,Y,\Theta)\,dt
\]
therefore gives a smooth extension, and direct differentiation yields
\begin{equation}\label{eq:cylinder-G-limit}
 G(0,Y,\Theta)=-(k+1)Y.
\end{equation}
Equivalently, since $B=r_{n,k}^{k+1}$,
\begin{equation}\label{eq:cylinder-expansion}
 -r\cos\theta+B\frac{\cos^k\theta}{r^k}
 =-(k+1)(r-r_{n,k})+O((r-r_{n,k})^2+\theta^2).
\end{equation}
\medskip\noindent\textbf{Step 2: solve the limiting cylinder perturbation.}
It follows from the displayed exact system that, on every fixed compact set
on which $r_{n,k}+\varepsilon Y>0$ and
$\cos(\varepsilon\Theta)\neq0$, the scaled vector field extends smoothly to
$\varepsilon=0$ and converges in $C^1$ to
\begin{equation}\label{eq:cylinder-limit-system}
 \frac{d\mathsf X}{ds}=1,\qquad
 \frac{d\mathsf Y}{ds}=\widehat\Theta,\qquad
 \frac{d\widehat\Theta}{ds}
 =c_{n,k}(\mathsf X\widehat\Theta-(k+1)\mathsf Y),\qquad
 c_{n,k}=\frac{r_{n,k}^{k-1}}{A},
\end{equation}
with $(\mathsf X,\mathsf Y,\widehat\Theta)(0)=(0,-1,0)$.  Since
$\mathsf X(s)=s$, the remaining equation is
\begin{equation}\label{eq:cylinder-Y}
 \frac{d^2\mathsf Y}{ds^2}
 =c_{n,k}\left(s\frac{d\mathsf Y}{ds}-(k+1)\mathsf Y\right),
 \qquad \mathsf Y(0)=-1,\qquad \frac{d\mathsf Y}{ds}(0)=0.
\end{equation}
Let $\operatorname{He}_m$ denote the probabilists' Hermite polynomial.  Since
$k+1$ is even, the solution is
\begin{equation}\label{eq:cylinder-polynomial}
 \mathsf Y(s)=-\frac{\operatorname{He}_{k+1}(\sqrt{c_{n,k}}\,s)}
 {\operatorname{He}_{k+1}(0)},\qquad
 \widehat\Theta=\frac{d\mathsf Y}{ds}.
\end{equation}
For completeness, the identities
$\frac{d}{dt}\operatorname{He}_m(t)=m\operatorname{He}_{m-1}(t)$ and
$\operatorname{He}_m(t)=t\operatorname{He}_{m-1}(t)
-(m-1)\operatorname{He}_{m-2}(t)$
show inductively that every $\operatorname{He}_m$ has $m$ distinct real
zeros: evaluation of the recurrence at consecutive zeros of
$\operatorname{He}_{m-1}$ gives alternating signs, and the two outer zeros
follow from the leading term; see also
Szeg\H{o}~\cite[Chapter~V]{Szego1975}.  Thus $\widehat\Theta$ has a first positive
zero $s_{\mathrm c}$.
Because $\frac{d\widehat\Theta}{ds}(0)=c_{n,k}(k+1)>0$ and this zero is simple,
\begin{equation}\label{eq:cylinder-transverse-zero}
 \widehat\Theta>0\text{ on }(0,s_{\mathrm c}),\qquad
 \widehat\Theta(s_{\mathrm c})=0,\qquad
 \frac{d\widehat\Theta}{ds}(s_{\mathrm c})<0,\qquad
 \mathsf X(s_{\mathrm c})=s_{\mathrm c}>0.
\end{equation}

\medskip\noindent\textbf{Step 3: transfer the first return of the angle.}
The $C^1$ convergence is important near the common zero at $s=0$.
To make the first-zero assertion explicit, choose $t_0>0$, $\eta>0$, and
$a_0,a_1>0$ so small that
\[
 \frac{d\widehat\Theta}{ds}\geq2a_0\quad\text{on }[0,t_0],
 \qquad
 \widehat\Theta>0\quad\text{on }[t_0,s_{\mathrm c}-\eta],
\]
and $d\widehat\Theta/ds<-2a_1$ on
$[s_{\mathrm c}-\eta,s_{\mathrm c}+\eta]$.  The $C^1$ convergence gives
\[
 \frac{d\widehat\Theta_\varepsilon}{ds}\geq a_0\quad\text{on }[0,t_0],
\]
so $\widehat\Theta_\varepsilon>0$ there after the common zero at $0$.  Uniform
$C^0$ convergence preserves positivity on $[t_0,s_{\mathrm c}-\eta]$, while
strict monotonicity on $[s_{\mathrm c}-\eta,s_{\mathrm c}+\eta]$ gives a
unique transverse zero $s_{\mathrm c,\varepsilon}\to s_{\mathrm c}$.  The
same convergence gives
\[
 \mathsf X_\varepsilon(s_{\mathrm c,\varepsilon})\longrightarrow s_{\mathrm c}>0,
\]
and, because $\widehat\Theta_\varepsilon$ stays uniformly bounded, also keeps
$0<\theta_\delta<\pi/2$ on $(0,s_{\mathrm c,\varepsilon})$.  Hence $dx_\delta/ds>0$
there and $r_\delta>0$.  Thus the first stopping event after the parameter
value zero is
$\theta_\delta(s_{\mathrm c,\varepsilon})=0$ at a point with $x_\delta>0$, rather than a
return to the $r$-axis.  Hence $\delta\notin\sS$ for every sufficiently small
$\varepsilon>0$.
\end{proof}

\subsection{An invariant slope barrier and axial compactness}

Before the first crossing, introduce
\begin{equation}\label{eq:normalised-vars}
 R=\frac r{r_{n,k}},\qquad \widehat X=\frac x{r_{n,k}},\qquad q=\tan\theta.
\end{equation}
Using $B=r_{n,k}^{k+1}$ and $B/A=\lambda$, system
\eqref{eq:profile-system} becomes
\begin{align}\label{eq:q-system}
 \frac{dR}{d\widehat X}&=q,\qquad
 \frac{dq}{d\widehat X}=\lambda R^{k-1}(1+q^2)^{(k+1)/2}h,
 \notag\\
 h&=\widehat Xq-R+\frac1{R^k(1+q^2)^{(k-1)/2}}.
\end{align}
Thus $q=dr/dx$ is the slope of the pre-crossing graph, while $h$ is the
normalised angular driving term: its sign is exactly the sign of
$dq/d\widehat X$.
The purpose of $h$ is therefore to turn monotonicity of the slope into a
first-contact barrier statement.

\begin{lemma}\label{lem:h-barrier}
In the region $q>0$, the set $\{h\leq0\}$ is forward invariant.  Every orbit
with parameter in $\sS$ satisfies $h>0$ before its first vertical tangent,
and therefore $dq/d\widehat X>0$ there.
\end{lemma}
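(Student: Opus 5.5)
The plan is to differentiate $h$ along solutions of \eqref{eq:q-system}, with $\widehat X$ as the independent variable, and check the sign of $dh/d\widehat X$ on the level set $\{h=0\}$. A direct computation, writing $q'=dq/d\widehat X$, gives
\[
 \frac{dh}{d\widehat X}
 =q'\Bigl(\widehat X-\frac{(k-1)q}{R^k(1+q^2)^{(k+1)/2}}\Bigr)
 -\frac{kq}{R^{k+1}(1+q^2)^{(k-1)/2}} .
\]
The two terms $\pm q$ coming from $d(\widehat Xq)/d\widehat X$ and $-dR/d\widehat X$ cancel.

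By \eqref{eq:q-system}, $q'=0$ exactly where $h=0$. Hence at every point of $\{h=0\}\cap\{q>0\}$ we get
\[
 \frac{dh}{d\widehat X}=-\frac{kq}{R^{k+1}(1+q^2)^{(k-1)/2}}<0 .
\]
Forward invariance of $\{h\le0\}$ then follows by a first-contact argument. Suppose $h(\widehat X_0)\le0$ and $h(\widehat X_1)>0$ for some $\widehat X_1>\widehat X_0$ while $q>0$. Let $\widehat X_2$ be the last zero of $h$ in $[\widehat X_0,\widehat X_1)$. Then $h>0$ on $(\widehat X_2,\widehat X_1]$, which forces $dh/d\widehat X(\widehat X_2)\ge0$ and contradicts the strict negativity above. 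Before the first vertical tangent we have $dx/ds=\cos\theta>0$, so forward in $\widehat X$ is the same as forward in arclength.

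For the second assertion, fix $\delta\in\sS$ and look at the pre-crossing branch.
\begin{itemize}
\item At $s=0$ we have $q=0$, $\widehat X=0$ and $R=\delta/r_{n,k}<1$, so $h(0)=R^{-k}-R>0$. Hence $q'>0$ initially and $q>0$ immediately afterwards.
\item As long as $h>0$, $q$ increases, so $q>0$ persists on the whole interval where $h>0$.
\item Suppose $h$ vanished at some pre-crossing point with $q>0$. By the invariance just shown, $h\le0$ from then on, and therefore $q$ is nonincreasing thereafter.
\item In that case $q$ is bounded above by its value $q_1$ at that point, so $\theta\le\arctan q_1<\pi/2$ for the rest of the branch.
\item The orbit therefore could never reach $\theta=\pi/2$. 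It would either stay in the regular region with $dx/ds>0$, or $q$ would drop to $0$, i.e.\ $\theta=0$ with $x>0$, which is a stopping event other than $x=0$.
\end{itemize}
Both alternatives contradict $\delta\in\sS$. Indeed, every shooting orbit must cross $\theta=\pi/2$ (otherwise $x$ cannot return to $0$), and it must do so before any angular stopping event. Consequently $h>0$ throughout the pre-crossing branch, and $dq/d\widehat X=\lambda R^{k-1}(1+q^2)^{(k+1)/2}h>0$ there.

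The only delicate points are the sign computation, which is a routine calculation once one notices the cancellation at $h=0$, and the care needed because the invariance statement is restricted to $q>0$. The latter is handled by the observation that $q$ remains positive until $h$ first vanishes.
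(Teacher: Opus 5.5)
Your proposal is correct and follows essentially the same route as the paper: the same derivative identity \eqref{eq:h-derivative}, strict negativity of $dh/d\widehat X$ on $\{h=0\}\cap\{q>0\}$, a first-contact argument for forward invariance, and a contradiction between $h(0)>0$ and the fact that every $\delta\in\sS$ must reach a vertical tangent. The differences are minor. You explicitly check that $q>0$ at the first zero of $h$, and you bypass the paper's continuation argument: you observe that $\theta\le\arctan q_1<\pi/2$ keeps $x$ increasing, so $x$ can never return to $0$.
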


\begin{proof}
Along \eqref{eq:q-system},
\begin{equation}\label{eq:h-derivative}
 \frac{dh}{d\widehat X}
 =\left(\widehat X-\frac{(k-1)q}{R^k(1+q^2)^{(k+1)/2}}\right)
 \frac{dq}{d\widehat X}
 -\frac{kq}{R^{k+1}(1+q^2)^{(k-1)/2}}.
\end{equation}
On $h=0$, one has $dq/d\widehat X=0$ and therefore
$dh/d\widehat X<0$.  The vector field points
strictly into $\{h<0\}$.  A standard first-contact argument therefore makes
$\{h\leq0\}$ forward invariant as long as $q>0$.  Inside that set,
$dq/d\widehat X\leq0$, so if an orbit enters it at
$\widehat X=\widehat X_0$, then
\begin{equation}\label{eq:h-region-q-bound}
 0<q(\widehat X)\leq q(\widehat X_0)
\end{equation}
for as long as $q>0$.  This bound also supplies the required continuation.
Indeed, since $dR/d\widehat X=q$,
\[
 R(\widehat X_0)\leq R(\widehat X)
 \leq R(\widehat X_0)+q(\widehat X_0)(\widehat X-\widehat X_0).
\]
Thus on every finite $\widehat X$-interval the pair $(R,q)$ remains in a compact
subset of $\{R>0,\ q\geq0\}$.  The right-hand side of
\eqref{eq:q-system} is smooth there, including at $q=0$, so the regular ODE
continuation theorem excludes a finite maximal $\widehat X$-value while $q$ remains
positive.  Consequently the forward orbit either reaches $q=0$ at a regular
point or exists for arbitrarily large $\widehat X$ with $q$ bounded by
\eqref{eq:h-region-q-bound}.  In neither case can it reach the vertical
tangent $q=+\infty$; in the former case it has reached $\theta=0$ first.

For an orbit launched with $\delta<r_{n,k}$, its initial value is
\[
 h(0)=-R(0)+R(0)^{-k}
 =\frac{1-R(0)^{k+1}}{R(0)^k}>0.
\]
If a shooting orbit had a first contact with $h=0$ before its vertical
tangent, the preceding invariance would prevent it from ever reaching
$q=+\infty$.  But the one-way crossing observation shows that every
$\delta\in\sS$ must reach a vertical tangent.  This contradiction proves
$h>0$, and the middle equation of \eqref{eq:q-system} then gives
$dq/d\widehat X>0$.
\end{proof}

Combining the one-sided barrier with the terminal-radius estimate gives the
required uniform bound in the axial direction.

\begin{proposition}\label{prop:x-compactness}
For every fixed pair $(n,k)$ satisfying \eqref{eq:nk-range} and every
$\delta_0\in(0,\delta_*)$, there is $C_x=C_x(n,k,\delta_0)$ such that
\begin{equation}\label{eq:x-compactness}
 \sup_{0<s<s_1(\delta)}x_\delta(s)\leq C_x
 \quad\text{for all }\delta\in\sS\cap[\delta_0,\delta_*).
\end{equation}
\end{proposition}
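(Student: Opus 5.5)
Since $x_\delta$ increases before the unique vertical tangent at $s_\delta^+$ and decreases afterwards (the one-way crossing observation preceding Lemma~\ref{lem:S-open}), we have
\[
\sup_{0<s<s_1(\delta)}x_\delta(s)=x_\delta(s_\delta^+).
\]
So it suffices to bound the crossing coordinate $x_\delta(s_\delta^+)$ uniformly for $\delta\in\sS\cap[\delta_0,\delta_*)$. Before the crossing the profile is a graph $r=r(x)$ with slope $q=\tan\theta$. By Lemma~\ref{lem:cylinder-gap}, the initial radii lie in the compact interval $[\delta_0,r_{n,k}-\varepsilon_0]$. By Lemma~\ref{lem:h-barrier}, $h>0$ and hence $q$ is nondecreasing in $\widehat X$ on the whole pre-crossing graph. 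The idea is that a monotone slope, combined with the terminal-radius bound of Lemma~\ref{lem:endpoint-radius}, prevents a long horizontal excursion: once the slope becomes positive it can never decrease, so $r$ grows at least linearly in $x$. Since $r\le r_\delta(s_1(\delta))\le C_e$ (recall $dr/ds>0$ throughout for shooting orbits), the remaining $x$-length is bounded.

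To make this quantitative, fix a short initial interval. From \eqref{eq:initial-curvature}, $d\theta/ds(0)=(B-\delta^{k+1})/(A\delta)$, which is bounded below by a positive constant $\kappa_0$ depending only on $\delta_0$ and $\varepsilon_0$, uniformly for $\delta\in[\delta_0,r_{n,k}-\varepsilon_0]$. The initial points form a compact subset of the regular region, so uniform local existence and a uniform $C^2$ bound give a fixed $x$-distance $\widehat X_1>0$ and a slope $q_1>0$, both depending only on $(n,k,\delta_0)$, such that $q(\widehat X_1)\ge q_1$ for every such $\delta$. This uses that every shooting orbit crosses only after this interval, as in \eqref{eq:crossing-x-lower}. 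Monotonicity of $q$ then gives $q\ge q_1$ on $[\widehat X_1,\widehat X(s_\delta^+))$. Integrating $dR/d\widehat X=q$ yields
\[
R(\widehat X)\ge R(\widehat X_1)+q_1(\widehat X-\widehat X_1)\ge q_1(\widehat X-\widehat X_1).
\]
Combined with $r\le C_e$, this gives $\widehat X(s_\delta^+)\le \widehat X_1+C_e/(r_{n,k}q_1)$, and therefore the bound \eqref{eq:x-compactness} with an explicit $C_x$.

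The main point to check is the uniform positive lower bound $q_1$ on the slope at a fixed $x$-distance. This is exactly where the cylinder gap is needed. As $\delta\to r_{n,k}$ the initial curvature $(B-\delta^{k+1})/(A\delta)$ degenerates to zero, and no uniform $q_1$ would exist. Restricting to $\delta\le r_{n,k}-\varepsilon_0$ keeps it bounded below. I would carry this step out by a Taylor estimate,
\[
\theta(s)\ge \kappa_0 s-Ms^2
\]
on $[0,\eta]$, where $M$ is a uniform bound on $d^2\theta/ds^2$ over the compact set of initial data; this yields $\theta(\eta)\ge\kappa_0\eta/2$ for small fixed $\eta$. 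A secondary concern is that the bound $r\le C_e$ is stated at the terminal point and must also hold along the orbit. This follows since $r$ is increasing up to $s_1(\delta)$, so the terminal radius controls $r$ along the pre-crossing graph.
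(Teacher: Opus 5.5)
Your proposal is correct and follows essentially the same route as the paper. The cylinder gap confines $\delta$ to $[\delta_0,r_{n,k}-\varepsilon_0]$, where the initial curvature is uniformly positive; this gives a uniform slope lower bound after a fixed initial interval, which Lemma~\ref{lem:h-barrier} propagates up to the vertical tangent, and the terminal-radius bound of Lemma~\ref{lem:endpoint-radius} then caps the axial excursion. The only cosmetic difference is that you measure the initial interval in $\widehat X$ and integrate $dR/d\widehat X=q$, whereas the paper uses a fixed arclength $\tau$ and integrates $dx=dr/q$, obtaining $C_x=\tau+C_e/q_0$.
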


\begin{proof}
By \eqref{eq:delta-star-gap}, all relevant initial parameters lie in the
compact interval
\begin{equation}\label{eq:initial-compact-interval}
 I=[\delta_0,r_{n,k}-\varepsilon_0]\Subset(0,r_{n,k}).
\end{equation}
Equation \eqref{eq:initial-curvature} is strictly decreasing in $\delta$,
because
\[
 \frac{d}{d\delta}\frac{B-\delta^{k+1}}{A\delta}
 =-\frac{B+k\delta^{k+1}}{A\delta^2}<0.
\]
It therefore has a positive lower bound $m_0$ on $I$.  Uniform local
existence and continuity of the smooth vector field near the compact initial
set provide $\tau>0$, independent of $\delta\in I$, such that the solutions
stay in a common regular neighbourhood and
\begin{equation}\label{eq:uniform-initial-leaving}
 \frac{d\theta_\delta}{ds}(s)\geq m_0/2,\qquad
 0\leq\theta_\delta(s)\leq\pi/4
 \quad (0\leq s\leq\tau).
\end{equation}
Thus, for $q_0=\tan(m_0\tau/2)>0$,
\begin{equation}\label{eq:q0}
 q_\delta(\tau)\geq q_0,\qquad x_\delta(\tau)\leq\tau.
\end{equation}

Let $s_\delta^+$ be the first vertical tangent.  By
Lemma~\ref{lem:h-barrier}, $dq/d\widehat X>0$ along the portion with
$s\in[\tau,s_\delta^+)$.  Here $\widehat X$ is a valid independent variable
because $dx/ds=\cos\theta>0$ before the crossing.
Thus $q\geq q_0$.  Since $dr/ds=\sin\theta>0$ and the one-way crossing
observation gives
$r_\delta(s_\delta^+)\leq r_\delta(s_1(\delta))$, one has
\begin{align}
 x_\delta(s_\delta^+)-x_\delta(\tau)
 &=\int_{r_\delta(\tau)}^{r_\delta(s_\delta^+)}\frac{dr}{q_\delta(r)}
 \notag\\
 &\leq\frac{r_\delta(s_\delta^+)-r_\delta(\tau)}{q_0}
 \leq\frac{C_e}{q_0},\label{eq:x-integral-bound}
\end{align}
where Lemma~\ref{lem:endpoint-radius} was used in the last step.  The
one-way crossing observation also shows that $x$ increases before the
crossing and decreases afterwards.  Hence its global maximum is the crossing
value, bounded by
$\tau+C_e/q_0$.
\end{proof}

\begin{remark}
The constant in Proposition~\ref{prop:x-compactness} may depend on the fixed
pair $(n,k)$; no uniformity as either parameter varies is asserted.
\end{remark}

\section{The critical orbit}\label{sec:critical}

We now pass to the supremal shooting parameter.  The only nonstandard issue
is compactness through the singular angular section.

The compactness statement needed at the supremal parameter is the following.

\begin{lemma}\label{lem:critical-connection}
Let $\delta_j\in\sS$, $\delta_j<\delta_*$, and
$\delta_j\uparrow\delta_*$.  After passing to a subsequence, the corresponding
orbits converge to a limiting critical orbit segment.  In regular regions,
convergence is understood in the coordinates $(x,r,\theta)$, while near
$\theta=\pi/2$ we use the coordinate $u=(\theta-\pi/2)^k$.  Its terminal angle belongs
to $(\pi/2,\pi]$.  If that angle is strictly less than $\pi$, the transverse
return to the $r$-axis persists under perturbation of the initial radius.
\end{lemma}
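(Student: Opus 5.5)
The plan is to work with the reference orbit at $\delta_*$ directly and use the $\delta_j$-orbits only as comparison objects, rather than extracting an abstract limit. By \eqref{eq:delta-star-gap}, $\delta_*\in(0,r_{n,k})$, so the initial point $(0,\delta_*,0)$ lies in the regular region. Fix $\delta_0<\delta_1\le\delta_j$. Then Lemma~\ref{lem:endpoint-radius} and Proposition~\ref{prop:x-compactness} confine every $\delta_j$-orbit, up to its return time, to the box $0\le x\le C_x$, $0<r\le C_e$. The crossing bound \eqref{eq:crossing-x-lower} gives a uniform lower bound $x_{\delta_j}(s_{\delta_j}^+)\ge m>0$. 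Since $r$ is increasing, $r\ge\delta_0$ throughout. Lemma~\ref{lem:h-barrier} gives $h>0$ and $q\ge q_0$ before the crossing, hence $\theta\ge\arctan q_0$ there. So the crossing data $(x_{\delta_j},r_{\delta_j})(s_{\delta_j}^+)$ lie in a fixed compact set $K_0\subset\{x\ge m,\ \delta_0\le r\le C_e\}$, to which Lemma~\ref{lem:crossing-map} applies with a uniform $u_0$.

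Next I control the pieces in turn. The pre-crossing piece from $s=0$ up to the incoming section $u=-u_0$ stays in a compact regular set: $\cos\theta\ge\sin(u_0^{1/k})$ there, $\theta\ge0$, $r\in[\delta_0,C_e]$, and $x\in[0,C_x]$. Its arclength is bounded because $x$ increases at speed at least $\cos\theta$ bounded below and $x\le C_x$. By Arzel\`a--Ascoli (equivalently, continuous dependence on $\delta$ over a uniformly bounded parameter interval), this piece converges to the $\delta_*$-orbit, which therefore also reaches $u=-u_0$. Passing to a subsequence, the crossing data converge in $K_0$, and Lemma~\ref{lem:crossing-map} carries the convergence through to the outgoing section $u=u_0$, including arclength.

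On the post-crossing piece, the barrier \eqref{eq:shooting-positive-u} is uniform: $r\ge\delta_0$ gives an $\eta$ independent of $j$. So the $\delta_j$-orbits stay in $\phi\ge\min(\eta,u_0^{1/k})$, in the box above, with $dx/ds=\cos\theta<0$ and arclength at most $C_x/\sin(\cdots)$, since $x$ must drop from at most $C_x$ to $0$ at speed $|\cos\theta|$ bounded below away from $\theta=\pi$. \textbf{This is the main obstacle}: near $\theta=\pi$ the speed $|dx/ds|$ stays close to $1$, but $\cos\theta$ bounded below fails near $\theta=\pi/2+\eta$. There I will use $|dx/ds|\ge|\sin\eta|$, which suffices, so the post-crossing arclength is uniformly bounded by $C_x/\sin\eta$. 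The vector field is regular on $\{\eta\le\phi\le\pi/2,\ r\ge\delta_0,\ x\ge0\}$ except possibly at $\theta=\pi$, where it is still regular because $\cos\theta=-1\neq0$. Uniform convergence on compact parameter intervals then gives convergence to the $\delta_*$-orbit. The terminal data converge to a point with $x=0$ and $\theta\in[\pi/2+\eta,\pi]$, and no earlier stopping event can occur for the limit because the limit inherits $x\ge0$ and the strict barriers. This yields the terminal angle in $(\pi/2,\pi]$.

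Finally, persistence: if the limit terminal angle is less than $\pi$, then $dx/ds=\cos\theta<0$ at the return, so the return is transverse. The argument of Lemma~\ref{lem:S-open} applies verbatim at $\delta_*$ and gives a whole neighbourhood of $\delta_*$ in $\sS$. In the later section this will contradict $\delta_*=\sup\sS$, forcing the terminal angle to equal $\pi$.
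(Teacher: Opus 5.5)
Your compactness argument is sound and is essentially the paper's three-chart scheme: a regular pre-crossing piece, the fixed $u$-chart of Lemma~\ref{lem:crossing-map}, and a regular post-crossing piece. Two variations are harmless: you bound arclength on each piece by a lower bound for $|\cos\theta|$ instead of the paper's total-variation estimate, and you identify the limit with the $\delta_*$-orbit by uniqueness. The point you flag as the main obstacle is not one, since on the whole post-crossing range $|\cos\theta|=\sin\phi\geq\sin\eta'$ with $\eta'=\min\{\eta,u_0^{1/k}\}$.

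The genuine gap is your claim that no earlier stopping event can occur for the limit because it inherits $x\geq0$ and the strict barriers. From $\theta_j<\pi$ you only obtain $\theta_*\leq\pi$. Neither the $h$-barrier nor the $u$-barrier \eqref{eq:shooting-positive-u} says anything near $\theta=\pi$. So nothing in your argument rules out an interior tangential contact of the limiting orbit with $\theta=\pi$ at a point with $x>0$.

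This matters for the persistence claim. The argument of Lemma~\ref{lem:S-open} needs the reference orbit to stay a positive distance from $\{\theta=\pi\}$ on the compact part before $s_-$. If the $\delta_*$-orbit grazed $\theta=\pi$ in the interior, orbits with $\delta>\delta_*$ could cross $\theta=\pi$ before returning to $x=0$, and they would not lie in $\sS$. Saying that Lemma~\ref{lem:S-open} applies verbatim at $\delta_*$ presupposes that the $\delta_*$-orbit already has the event structure of an element of $\sS$, which is exactly what has to be proved.

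The missing step is short but necessary.
\begin{itemize}
\item At an interior point with $\theta_*=\pi$, the function $\theta_*$ attains its maximum, so $d\theta_*/ds=0$ there.
\item At $\theta=\pi$ the angular equation reads $d\theta/ds=\frac{r^k}{A}\bigl(1-\frac{B}{r^{k+1}}\bigr)$, which forces $r=r_{n,k}$. The paper instead treats the case $r\neq r_{n,k}$ as a transverse intersection that would persist to the approximants.
\item The state then coincides with that of the explicit reverse cylinder $x=x_0-(s-s_0)$, $r=r_{n,k}$, $\theta=\pi$, which is a regular solution.
\item Uniqueness of the regular flow forward and backward in $s$ makes the coincidence set open and closed in the connected post-crossing interval. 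Hence $\theta_*\equiv\pi$ back to the outgoing section, contradicting $\theta_*<\pi$ there.
\end{itemize}

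You also need two further facts, both of which your setup supports.
\begin{itemize}
\item $x_*>0$ on the interior, from strict monotonicity of $x_*$ on each side of the crossing together with $x_*(s_*)=0$.
\item $\theta_*>0$, from the initial departure together with $q_*\geq q_0$ after $\tau$. Note that $q\geq q_0$ holds only after the initial interval $[0,\tau]$, not on the whole pre-crossing segment as you wrote.
\end{itemize}
Together these give positive distance from all stopping sets up to $s_-$. Only then does your appeal to Lemma~\ref{lem:S-open} go through.
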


\begin{proof}
\medskip\noindent\textbf{Step 1: uniform state and arclength bounds.}
Choose $\delta_0\in(0,\delta_*)$ and discard finitely many terms.  The bounds
of Lemma~\ref{lem:endpoint-radius} and Proposition~\ref{prop:x-compactness}
give
\begin{equation}\label{eq:xr-critical-bounds}
 0<r_j\leq C,\qquad 0\leq x_j\leq C.
\end{equation}
Since $x_j$ is monotone on each side of its unique vertical tangent and $r_j$
is increasing, let $\operatorname{Var}_{[0,s_{1,j}]}(f)$ denote the total
variation of $f$ on $[0,s_{1,j}]$.  Then
\begin{equation}\label{eq:arclength-critical-bound}
 s_{1,j}\leq\operatorname{Var}_{[0,s_{1,j}]}(x_j)
 +\operatorname{Var}_{[0,s_{1,j}]}(r_j)
 \leq2\sup x_j+r_j(s_{1,j})-\delta_j\leq C.
\end{equation}
The first inequality follows by integrating
\[
 1=\sqrt{\left(\frac{dx_j}{ds}\right)^2+
 \left(\frac{dr_j}{ds}\right)^2}
 \leq \left|\frac{dx_j}{ds}\right|+\left|\frac{dr_j}{ds}\right|.
\]
The one-way crossing observation in Section~\ref{sec:compactness} gives
$\operatorname{Var}_{[0,s_{1,j}]}(x_j)=2\max x_j$, while
$dr_j/ds>0$ gives
$\operatorname{Var}_{[0,s_{1,j}]}(r_j)=r_j(s_{1,j})-\delta_j$.

\medskip\noindent\textbf{Step 2: common crossing sections and post-crossing
separation.}
The first crossing is uniformly nondegenerate.  Indeed, uniform local
existence near the compact initial set gives $\tau>0$ such that
$|\theta_j|\leq\pi/3$ and $dx_j/ds\geq1/2$ on $[0,\tau]$.  Since the first
vertical tangent occurs later and $x_j$ is increasing before it,
\begin{equation}\label{eq:critical-crossing-lower}
 x_j(s_j^+)\geq x_j(\tau)\geq\tau/2,\qquad
 \delta_0\leq r_j(s_j^+)\leq C.
\end{equation}
Thus all crossing data belong to the compact set
\[
 K_0=[\tau/2,C]\times[\delta_0,C]
 \Subset\{(x,r):x>0,\ r>0\}.
\]
In the $u$-chart of Section~\ref{sec:crossing},
\[
 \frac{du}{ds}=H(x,r,u),\qquad H(x,r,0)=\frac{k r^{k-1}x}{A}.
\]
Thus $H$ has a common positive lower bound near all crossing data, and
Lemma~\ref{lem:crossing-map} supplies a common incoming section, a common
outgoing section, and continuous dependence of both the state and elapsed
arclength.

We also need a uniform post-crossing separation.  Choose
$\phi_0\in(0,\pi/4)$ such that
$B\sin^{k-1}\phi_0<\delta_0^{k+1}/2$.  For
$0<\phi=\theta-\pi/2\leq\phi_0$,
\begin{equation}\label{eq:positive-u-barrier}
 x\cos\phi+r\sin\phi-B\frac{\sin^k\phi}{r^k}
 \geq\frac{\sin\phi}{r^k}(r^{k+1}-B\sin^{k-1}\phi)>0.
\end{equation}
Hence $du/ds>0$.  On the common crossing chart one also has
$H\geq h_0>0$ and $|dx/du|\leq C_0$.  Choose
\[
 0<u_1<
 \min\left\{u_0,\phi_0^k,\frac{\tau}{4C_0}\right\}.
\]
The level $u=u_1$ is reached while $x_j\geq\tau/4$, because the loss in $x$
between $u=0$ and $u=u_1$ is at most $C_0u_1$.  No orbit can cross this
level downwards later: at a first downward crossing one would have
$du/ds\leq0$, contradicting \eqref{eq:positive-u-barrier}.  Therefore
\begin{equation}\label{eq:post-cross-separation}
 \theta_j\geq\pi/2+u_1^{1/k}
\end{equation}
after leaving the crossing chart.

\medskip\noindent\textbf{Step 3: compactness in the three fixed charts.}
We now spell out the three-chart compactness argument, including the varying
terminal parameters.  Let $a_j$ be the arclength parameter at which the orbit
reaches the incoming section $u=-u_0$, and let $b_j$ be the arclength parameter
at which it reaches the fixed
regular outgoing section $u=u_1$.  After passing to a subsequence, the bounds
above give
\[
 s_{1,j}\longrightarrow s_*,
 \qquad
 a_j\longrightarrow a_*,
\]
and the incoming data at $u=-u_0$ converge.  Before that section,
\[
 0\leq\theta_j\leq\pi/2-u_0^{1/k},\qquad
 \delta_0\leq r_j\leq C,\qquad 0\leq x_j\leq C.
\]
The regular vector field and its first derivatives are bounded on this
compact set.  Since the incoming section is a positive distance from the
singular angle, uniform local existence extends all pre-crossing segments a
common short arclength interval past $a_j$.  Standard continuous dependence
on the initial radius, applied on the resulting fixed parameter interval,
gives convergence of
the pre-crossing segments to a regular solution on $[0,a_*]$.  Evaluation at
$a_j\to a_*$ identifies its endpoint with the limiting incoming data.

On the fixed interval $[-u_0,u_1]$, Lemma~\ref{lem:crossing-map} gives
convergence of $(x_j,r_j,s_j)$ in the $u$-chart.  In particular,
\[
 b_j\longrightarrow b_*
\]
and the data at $u=u_1$ converge.  After reaching this section,
\eqref{eq:post-cross-separation} places the solution orbits in the compact
regular set
\[
 \begin{aligned}
 K_{\rm reg}
 &:=[0,C]\times[\delta_0,C]\times
 [\pi/2+u_1^{1/k},\pi],\\
 K_{\rm reg}&\Subset\{r>0,\ \cos\theta\neq0\}.
 \end{aligned}
\]
Set
\[
 \ell_j=s_{1,j}-b_j,\qquad \ell_*=s_*-b_*.
\]
Here $\ell_j$ is the post-crossing arclength of the $j$th orbit and
$\ell_*$ is its limit; these symbols are unrelated to the extinction time
$T$ in the homothetic flow.  Then $\ell_j\to\ell_*$.  Moreover,
$\ell_*>0$: at $u=u_1$ one has
$x_j\geq\tau/4$, while $|dx_j/ds|\leq1$ and $x_j(s_{1,j})=0$, so
$\ell_j\geq\tau/4$.

Write
\[
 Z_j(\sigma)=(x_j,r_j,\theta_j)(b_j+\sigma),
 \qquad 0\leq \sigma\leq \ell_j.
\]
Thus $Z_j$ is the post-crossing state curve translated to a common initial
parameter, and $\sigma$ is its translated arclength parameter.
Their terminal states belong to the compact regular set
\[
 K_{\rm end}
 =\{0\}\times[\delta_0,C]\times
 [\pi/2+u_1^{1/k},\pi].
\]
Because $K_{\rm end}$ is compact and separated from $r=0$ and
$\cos\theta=0$, uniform local existence provides $\eta>0$ and a compact set
\[
 K_{\rm ext}\Subset\{r>0,\ \cos\theta\neq0\}
\]
such that every regular solution starting in $K_{\rm end}$ is defined for
an arclength interval of length $\eta$ and remains in $K_{\rm ext}$.  Extend
each $Z_j$ from $\ell_j$ to $\ell_j+\eta$ and set
\[
 K_{\rm post}:=K_{\rm reg}\cup K_{\rm ext}.
\]
Thus each extended post-crossing orbit remains in the fixed compact regular
set $K_{\rm post}$.  The vector field is uniformly Lipschitz on a
neighbourhood of this set.  Since $\ell_j\to\ell_*$, for all large $j$ the
extended orbits are defined on the common interval
\[
 [0,L_0],\qquad L_0:=\ell_*+\eta/2,
\]
and continuous dependence on the convergent data at $u=u_1$ gives
\[
 Z_j\longrightarrow Z_*
 \quad\text{uniformly on }[0,L_0].
\]
For a state vector $Z=(x,r,\theta)$, let $[Z]_x$ denote its first, or axial,
component.  Since $\ell_j\to\ell_*$ and
$[Z_j(\ell_j)]_x=x_j(b_j+\ell_j)=x_j(s_{1,j})=0$, continuity and uniform
convergence give
\begin{equation}\label{eq:terminal-parameter-limit}
 [Z_*(\ell_*)]_x=0.
\end{equation}
The pre-crossing, $u$-chart, and post-crossing limits agree on their fixed
overlap sections and therefore form one limiting orbit segment
\[
 (x_*,r_*,\theta_*):[0,s_*]\longrightarrow
 [0,C]\times[\delta_0,C]\times[0,\pi].
\]
In the original arclength coordinate,
\eqref{eq:terminal-parameter-limit} is $x_*(s_*)=0$.
On the post-crossing segment, \eqref{eq:post-cross-separation} gives
$dx_*/ds=\cos\theta_*<0$.  Hence $x_*(s)>0$ for every post-crossing
$s<s_*$.

\medskip\noindent\textbf{Step 4: exclude an interior contact with
$\theta=\pi$.}
The limit cannot meet $\theta=\pi$ at an interior point with $x>0$.  At such a
point
\begin{equation}\label{eq:theta-at-pi}
 \frac{d\theta}{ds}=\frac{r^k}{A}\left(1-\frac{B}{r^{k+1}}\right).
\end{equation}
\medskip\noindent\textbf{Case 1: $r\neq r_{n,k}$.}
The intersection is transverse and would persist for the
approximating shooting orbits by regular continuous dependence, contradicting
$0<\theta_j<\pi$ before their terminal events.

\medskip\noindent\textbf{Case 2: $r=r_{n,k}$.}
The explicit
reverse cylinder
\[
 x=x_0-(s-s_0),\qquad r=r_{n,k},\qquad \theta=\pi
\]
passes through the point.  Let $[b_*,s_0]$ be the connected regular segment
from the fixed outgoing section to the alleged contact, and let
$Z_{\mathrm{cyl}}(s)$ denote
this reverse cylinder with the same state at $s_0$.  Define
\[
 E=\{s\in[b_*,s_0]:Z_*(s)=Z_{\mathrm{cyl}}(s)\}.
\]
The set $E$ is nonempty because $s_0\in E$, and it is closed by continuity.
At every $s\in E$ the vector field is smooth; local uniqueness both forward
and backward in the arclength parameter shows that $E$ contains a relative neighbourhood of
$s$.  Thus $E$ is also open in the connected interval $[b_*,s_0]$, so
$E=[b_*,s_0]$.  In particular, $\theta_*\equiv\pi$ at the outgoing section,
contradicting $\theta_*(b_*)=\pi/2+u_1^{1/k}<\pi$.

It follows from \eqref{eq:post-cross-separation} that the terminal angle lies
in $(\pi/2,\pi]$.  If it is less than $\pi$, then
$dx_*/ds(s_*)<0$ and the terminal
intersection with $x=0$ is transverse.  On the post-crossing segment one has
$dx_*/ds<0$.  The preceding exclusion of an interior contact with $\theta=\pi$,
together with the strict terminal inequality, also places the entire
post-crossing angle in a compact subinterval of $(\pi/2,\pi)$.

\medskip\noindent\textbf{Step 5: persistence of a transverse terminal return.}
Extend the limiting regular solution a short arclength interval past $s_*$ and choose
$s_-<s_*<s_+$ such that
\[
 x_*(s_-)>0,\qquad x_*(s_+)<0,
\]
while $\theta_*$ remains in a compact subinterval of $(\pi/2,\pi)$.
Choose a small fixed initial interval on which the uniform initial-departure
estimate gives $x>0$ and $\theta>0$.  From the end of that interval to the
incoming section, Lemma~\ref{lem:h-barrier} and the uniform lower bound for
$q$ give the same strict inequalities in the limit.  Across the crossing
chart $x$ stays positive, and on the post-crossing segment positivity was
proved above.  Hence, on the compact remainder of the limiting orbit up to
$s_-$, there is a positive distance from the stopping sets $x=0$,
$\theta=0$, and $\theta=\pi$.  The pre-crossing regular flow, the fixed $u$
crossing map, and the post-crossing regular flow compose to a map continuous
in the initial radius on these fixed sections and parameter intervals.  Therefore
the initial departure, the uniform separation from the stopping sets up to
$s_-$, the two
signs at $s_-$ and $s_+$, and the angular separation on $[s_-,s_+]$ all
persist for every sufficiently close initial radius.  The intermediate value
theorem, together with $dx/ds<0$ on the terminal window, then gives a unique
transverse return to $x=0$ before either angular stopping event.
\end{proof}

\begin{proof}[Proof of Theorem~\ref{thm:main}, geometric part]
By Lemma~\ref{lem:S-open}, the number $\delta_*<r_{n,k}$ cannot itself belong to
$\sS$; otherwise parameters slightly larger than $\delta_*$ would also
belong to $\sS$.  Choose
\[
 \delta_j\in\sS,\qquad \delta_j<\delta_*,\qquad
 \delta_j\uparrow\delta_*.
\]
Lemma~\ref{lem:critical-connection} gives a critical orbit with
\begin{equation}\label{eq:critical-end-data}
 (x_*(0),r_*(0),\theta_*(0))=(0,\delta_*,0),
 \qquad
 x_*(s_*)=0,\quad \theta_*(s_*)\in(\pi/2,\pi].
\end{equation}
If $\theta_*(s_*)<\pi$, the persistence statement of
Lemma~\ref{lem:critical-connection} places some $\delta>\delta_*$ in
$\sS$, contradicting \eqref{eq:delta-star}.  Hence
\begin{equation}\label{eq:critical-angle-pi}
 \theta_*(s_*)=\pi.
\end{equation}

On $(0,s_*)$ one has $x_*>0$ and $dr_*/ds>0$.  These strict inequalities follow
from the event structure, not merely from pointwise convergence.  Before the
vertical tangent, the uniform initial departure and
Lemma~\ref{lem:h-barrier} give $0<\theta_*<\pi/2$, hence
$dx_*/ds>0$ and $dr_*/ds>0$.  After the outgoing crossing section,
\eqref{eq:post-cross-separation} and the exclusion of an interior contact
with $\theta=\pi$ give $\pi/2<\theta_*<\pi$, so $dx_*/ds<0$ and
$dr_*/ds>0$.  A hypothetical interior zero of $x_*$ on this latter segment
would be transverse and would persist for the approximating shooting orbits
before their terminal parameters, a contradiction.  Thus the half-profile
curve is the positive graph $x=f(r)>0$.

Reflect it across the $r$-axis using Proposition~\ref{prop:reflection}.  The
endpoint tangents are horizontal and oppositely directed, so the reflected
curve is $C^1$ at its two intersections with the $r$-axis.  Since the
half-profile curve is a graph with strictly increasing $r$, it has no
self-intersections; its
reflected negative graph meets it only at the two endpoints.  The resulting
closed curve is therefore simple.

The two intersections with the $r$-axis are in fact smooth joining points.
On the reflected half use the real angle lift
\[
 \widetilde\theta(\bar s)=2\pi-\theta_*(s_*-\bar s).
\]
At the outer intersection with the $r$-axis this lift equals $\pi$, matching
the terminal state of the original half-profile curve.  At the inner
intersection with the $r$-axis it equals $2\pi$,
which matches the initial angle after replacing its real lift $0$ by
$2\pi$.  The profile system is smooth at both horizontal tangencies and is
$2\pi$-periodic in the angle.  Local uniqueness therefore identifies the two
sides at each intersection as restrictions of one smooth regular ODE orbit.

It follows that the closed curve is smooth except at the two reflected
vertical tangencies.  At these
points Lemma~\ref{lem:unique-crossing} gives the unique continuation and the
$C^{1,1/k}\cap W^{2,p}$ regularity for $p<k/(k-1)$.  More explicitly,
\eqref{eq:tangent-holder} applies near each singular parameter, while the
unit tangent field is smooth, and hence locally Lipschitz, at every regular
parameter.  These neighbourhoods form an open cover of the compact parameter
circle.  Let $d_{\mathrm{arc}}$ denote the shorter arclength distance between
two parameter values on this circle.  Choose a finite subcover and let
$\ell_{\mathrm L}>0$ be a Lebesgue number for it.  After enlarging a common
constant $C$, every pair with $d_{\mathrm{arc}}(s,t)<\ell_{\mathrm L}$
satisfies the corresponding local $1/k$-H\"older estimate.  If
$d_{\mathrm{arc}}(s,t)\geq\ell_{\mathrm L}$, the unit length of the two
tangent vectors gives the same estimate with constant
$2\ell_{\mathrm L}^{-1/k}$.  Thus, for the closed profile curve $\gamma$,
\[
 \left|
 \frac{d\gamma}{ds}(s)-\frac{d\gamma}{ds}(t)
 \right|
 \leq C\,d_{\mathrm{arc}}(s,t)^{1/k}.
\]
on the closed profile curve.  This proves the global $C^{1,1/k}$ assertion in
Theorem~\ref{thm:main}\textup{(iv)}.

Rotation about the $x$-axis
produces an embedded hypersurface: equality of two rotated points first gives
equality of their $(x,r)$ profile coordinates, hence equality of their profile
parameters, and then equality of their spherical variables because $r>0$.
Its topology is that of the product of unit spheres
$\Sn^1\times\Sn^{n-1}$.  It satisfies \eqref{eq:self-similar} classically
away from the two singular latitudes.  The almost-everywhere statement and
the flow interpretation are proved in Section~\ref{sec:weak}.

With the original half-profile curve followed by its reflected half, the closed
profile curve is counterclockwise in the $(x,r)$-plane.  Its outward planar normal
is therefore $(dr/ds,-dx/ds)=(\sin\theta,-\cos\theta)$, so the normal
$\nu=(\sin\theta,-\cos\theta\,\omega)$ used throughout is precisely the
outward normal of the bounded solid torus.
\end{proof}

\section{Sobolev almost-everywhere interpretation of the flow}\label{sec:weak}

Antonini~\cite[Definition~3 and equation~(2.8)]{Antonini2024} defines the weak
second fundamental form of a Lipschitz boundary whose local graph functions
lie in $W^{2,1}$.
For a graph $x_{n+1}=\varphi(y)$ with base variable $y\in\R^n$, his convention gives
\[
 \mathcal B_{ij}=-\frac{\partial^2\varphi/\partial y_i\partial y_j}
 {\sqrt{1+|\nabla\varphi|^2}}
\]
almost everywhere.  Here the domain lies locally below the graph and
$\nu=(-\nabla\varphi,1)/\sqrt{1+|\nabla\varphi|^2}$ is its outward unit
normal.  Thus $\mathcal B(U,V)=\langle D_U\nu,V\rangle$.  Since our shape
operator is $S=-D\nu$, its weak counterpart is
$S_{\mathrm w}=-g^{-1}\mathcal B$, where $g$ is the induced metric.

We use the following notion of solution for the constructed hypersurface.

\begin{definition}\label{def:Sobolev-ae}
Let $\Sigma=\partial\Omega$ be a compact embedded $C^1\cap W^{2,1}$
hypersurface with continuous unit normal $\nu$ and weak shape operator
$S_{\mathrm w}$.  It is a Sobolev almost-everywhere solution of
\eqref{eq:self-similar} if $\sigma_k(S_{\mathrm w})\in L^1(\Sigma)$ and
\[
 \langle X,\nu\rangle=-\sigma_k(S_{\mathrm w})
\]
at $\mathcal H^n$-almost every point.
Here $C^1\cap W^{2,1}$ means that, after a rigid rotation if necessary,
$\Sigma$ is locally the graph of a function belonging to
$C^1\cap W^{2,1}$.
\end{definition}

The torus constructed above satisfies this definition.

\begin{proposition}
\label{prop:weak-interpretation}
The torus constructed in Section~\ref{sec:critical} is a Sobolev
almost-everywhere solution.  In fact,
$\sigma_k(S_{\mathrm w})\in L^\infty(\Sigma)$.
\end{proposition}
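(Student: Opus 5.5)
The plan is to verify the three requirements of Definition~\ref{def:Sobolev-ae} in turn: local graph regularity $C^1\cap W^{2,1}$, identification of the weak shape operator $S_{\mathrm w}$ with the classical one away from the two singular latitudes, and boundedness of $\sigma_k(S_{\mathrm w})$. Since the singular latitudes form a set of $\mathcal H^n$-measure zero, the equation $\langle X,\nu\rangle=-\sigma_k(S_{\mathrm w})$ then holds almost everywhere by part (i) of Theorem~\ref{thm:main}, which was proved classically in Section~\ref{sec:critical}.

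\textbf{Local graphs.} Away from the singular latitudes the torus is smooth, so only neighbourhoods of the two vertical tangencies need attention. There $\theta$ is near $\pm\pi/2$, so $r$ is a strictly monotone $C^1$ function of $s$ by \eqref{eq:r-cross-reg}, and the profile curve is locally a graph $x=f(r)$. Near a point of the latitude, choose coordinates in which the hypersurface is the graph of $x=f(|y|)$ over $y\in\R^n$ near a point with $|y|=r_0>0$. Since $f'(r)=\cot\theta$ vanishes at $r_0$ and the tangent is $C^{0,1/k}$ by \eqref{eq:tangent-holder}, we get $f\in C^{1,1/k}$. Moreover $f''(r)=-\csc^3\theta\,(d\theta/ds)$ is bounded by a multiple of $|r-r_0|^{-(k-1)/k}$ by \eqref{eq:curvature-blowup}, which is integrable. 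To see that this is the weak second derivative and not just the pointwise one, use that $f'$ is absolutely continuous: it is $C^1$ off $r_0$ and continuous at $r_0$, with integrable derivative. The composition $y\mapsto f(|y|)$ is then $C^1\cap W^{2,1}$ near $|y|=r_0$, because $|y|$ is smooth there.

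\textbf{Weak shape operator.} By Antonini's formula, $S_{\mathrm w}$ is computed almost everywhere from the a.e.\ Hessian. On the complement of the latitude this Hessian agrees with the classical one, so $S_{\mathrm w}=S$ there, with eigenvalues \eqref{eq:principal-curvatures}. The main point to check is the sign and orientation convention. The domain should lie on the correct side of the graph for the outward normal $\nu=(\sin\theta,-\cos\theta\,\omega)$; this was identified at the end of the proof of Theorem~\ref{thm:main}. With that convention, $S_{\mathrm w}=-g^{-1}\mathcal B$ matches $S=-D\nu$.

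\textbf{Boundedness.} Write $\sigma_k(S)=B\kappa_{\mathrm{rot}}^k+A\kappa_{\mathrm{rot}}^{k-1}\kappa_{\mathrm{mer}}$. The first term is bounded. For the second, $\kappa_{\mathrm{rot}}^{k-1}\kappa_{\mathrm{mer}}=-(\cos\theta/r)^{k-1}d\theta/ds$, and by Step~1 of Lemma~\ref{lem:unique-crossing} this equals a bounded multiple of $du/ds=H$, since $\cos^{k-1}\theta=\sin^{k-1}\phi\sim\phi^{k-1}$. A simpler route is to read boundedness directly off the equation: $\sigma_k=-\langle X,\nu\rangle$ off the latitudes, and $|\langle X,\nu\rangle|\le|X|$ is bounded on the compact torus. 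This gives $\sigma_k(S_{\mathrm w})\in L^\infty\subset L^1$.

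I expect the only real obstacle to be the identification of the distributional Hessian with the pointwise one across the latitude. This is handled by the absolute continuity of $f'$ argued above.
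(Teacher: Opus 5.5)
Your proposal is correct and follows essentially the same route as the paper. Both arguments write each singular latitude as the graph $y\mapsto f(|y|)$ with $|f''(r)|\le C|r-r_0|^{-(k-1)/k}$, identify $S_{\mathrm w}$ with $S$ off the $\mathcal H^n$-null latitudes, and read $L^\infty$ boundedness off $\sigma_k(S_{\mathrm w})=-\langle X,\nu\rangle$. Your absolute-continuity argument for $f'$ spells out why the pointwise and weak second derivatives agree, a point the paper leaves implicit; one citation slip is that the monotonicity of $r$ near $s_0$ comes from $dr/ds=\sin\theta\approx\pm1$, not from \eqref{eq:r-cross-reg}.
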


\begin{proof}
Near a singular parameter $s_0$ one has
$dr/ds(s_0)=\sin\theta(s_0)=\pm1$.  The inverse function theorem for the
$C^1$ function $r(s)$ therefore permits us to write the profile curve as
$x=f(r)$.  On the punctured neighbourhood,
\[
 \frac{df}{dr}=\frac{dx/ds}{dr/ds}=\cot\theta,
 \qquad
 \frac{d^2f}{dr^2}=\frac1{dr/ds}\frac{d}{ds}\cot\theta
 =-\frac{d\theta/ds}{\sin^3\theta}.
\]
Lemma~\ref{lem:unique-crossing} gives
$|d\theta/ds(s)|=O(|s-s_0|^{-(k-1)/k})$.  Since $dr/ds(s_0)=\pm1$, the quantities
$|r-r_0|$ and $|s-s_0|$ are comparable near $s_0$, and hence
\begin{equation}\label{eq:weak-f-second}
 \left|\frac{d^2f}{dr^2}(r)\right|\leq C|r-r_0|^{-(k-1)/k}.
\end{equation}

The corresponding hypersurface patch is the graph
$y\mapsto f(|y|)$ over an annulus in $\R^n$ containing $|y|=r_0>0$.
Writing $\rho=|y|$, one has
\[
 \frac{\partial^2}{\partial y_i\partial y_j}\bigl(f(\rho)\bigr)
 =
 \frac{d^2f}{d\rho^2}(\rho)\frac{y_i y_j}{\rho^2}
 +\frac{(df/d\rho)(\rho)}{\rho}
 \left(\delta_{ij}-\frac{y_i y_j}{\rho^2}\right).
\]
Here $\delta_{ij}$ is the Kronecker delta, $df/d\rho$ is continuous and
bounded, $\rho$ is bounded away from zero, and
\eqref{eq:weak-f-second} belongs to $L^p$ for $p<k/(k-1)$.
The nonzero leading coefficient in
\eqref{eq:cross-asymptotic}--\eqref{eq:curvature-blowup} gives the matching
lower bound $|d^2f/dr^2(r)|\geq c|r-r_0|^{-(k-1)/k}$ after shrinking the punctured
neighbourhood.  Thus the range $p<k/(k-1)$ is sharp.
Consequently, each singular patch belongs to $W^{2,p}$ for those exponents
and, in particular, to $W^{2,1}$.  The remaining patches are smooth, so a finite covering gives the
claimed global regularity.  Since the hypersurface is compact, connected,
embedded, and $C^1$, the Jordan--Brouwer separation theorem identifies it
with the boundary of the bounded complementary component.  It therefore has
a global weak shape operator as in Definition~\ref{def:Sobolev-ae}.

On every smooth patch, weak derivatives agree with classical derivatives,
and hence $S_{\mathrm w}=S$ almost everywhere there.  Each singular latitude
is a copy of $\Sn^{n-1}$ inside the $n$-dimensional hypersurface, so the two
singular latitudes have zero $\mathcal H^n$ measure.  Therefore
\[
 \sigma_k(S_{\mathrm w})
 =\sigma_k(S)
 =-\langle X,\nu\rangle
\]
almost everywhere.  Although the individual coefficients of
$S_{\mathrm w}$ need not be bounded, this particular degree-$k$ combination
equals almost everywhere the continuous function $-\langle X,\nu\rangle$
on the compact hypersurface.  It consequently belongs to
$L^\infty(\Sigma)$ and in particular to $L^1(\Sigma)$.
\end{proof}

The self-similar equation now gives the corresponding almost-everywhere flow.

\begin{proposition}\label{prop:weak-flow}
For
\[
 a(t)=((k+1)(T-t))^{1/(k+1)},\qquad X_t=a(t)X,
\]
let $\nu_t$ and $S_{\mathrm w,t}$ denote the outward unit normal and weak
shape operator of $X_t(M)$, respectively.  Then
\[
 \left\langle\frac{\partial X_t}{\partial t},\nu_t\right\rangle
 =\sigma_k(S_{\mathrm w,t})
\]
at almost every spatial point for each $t<T$.
\end{proposition}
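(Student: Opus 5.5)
The plan is to reduce Proposition~\ref{prop:weak-flow} to Proposition~\ref{prop:weak-interpretation} by a scaling argument. For a fixed $t<T$, the set $\Sigma_t=X_t(M)=a(t)\Sigma$ is a dilation of $\Sigma$ by the positive factor $a(t)$. It is therefore again a compact embedded $C^1\cap W^{2,1}$ boundary, namely of $a(t)\Omega$. Its outward unit normal at $a(t)X(p)$ is $\nu_t=\nu(p)$, since dilation preserves normal directions and the enclosed domain.

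The first step is to compute how the weak shape operator transforms. We work in a local graph chart, writing $\Sigma$ as $x_{n+1}=\varphi(y)$ with $\varphi\in C^1\cap W^{2,1}$. Then $\Sigma_t$ is the graph of $\varphi_t(y)=a\,\varphi(y/a)$, where $a=a(t)$. Hence $\nabla\varphi_t(y)=\nabla\varphi(y/a)$, and the weak Hessian is $D^2\varphi_t(y)=a^{-1}D^2\varphi(y/a)$. The chain rule for weak derivatives under a linear change of variables is elementary. The induced metric in the natural coordinates is unchanged as a matrix function of the rescaled base point. Antonini's formula therefore gives $\mathcal B_t=a^{-1}\mathcal B$ and $S_{\mathrm w,t}=a^{-1}S_{\mathrm w}$ at corresponding points. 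This holds almost everywhere, because dilation maps $\mathcal H^n$-null sets to $\mathcal H^n$-null sets. Homogeneity of degree $k$ then yields
\[
 \sigma_k(S_{\mathrm w,t})(aX)=a^{-k}\sigma_k(S_{\mathrm w})(X).
\]

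Next I compute the velocity term: $\partial X_t/\partial t=a'(t)X$ with $a'=-a^{-k}$. This follows from $a^{k+1}=(k+1)(T-t)$, since differentiating gives $(k+1)a^ka'=-(k+1)$. Thus $\langle\partial X_t/\partial t,\nu_t\rangle=-a^{-k}\langle X,\nu\rangle$. By Proposition~\ref{prop:weak-interpretation}, this equals $a^{-k}\sigma_k(S_{\mathrm w})$ at $\mathcal H^n$-almost every point of $\Sigma$. By the scaling identity it equals $\sigma_k(S_{\mathrm w,t})$ at the image point. The exceptional set is the image under dilation of a null set of $\Sigma$, so it is $\mathcal H^n$-null in $\Sigma_t$; in fact it is contained in the two dilated singular latitudes. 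Finally, $\sigma_k(S_{\mathrm w,t})\in L^\infty$ is inherited from the corresponding statement for $\Sigma$.

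The only point needing care is the transformation rule for the weak shape operator. One must check that it is intrinsic, meaning independent of the graph chart and of the rigid rotation used in Definition~\ref{def:Sobolev-ae}. One must also check that the dilated charts are admissible. I would settle this by noting that the dilation commutes with rigid motions. Alternatively, one can observe that away from the two singular latitudes everything is classical: there $S_t=a^{-1}S$ holds pointwise by smooth scaling, and the latitudes are null. This second route sidesteps chart-independence altogether, because the weak operator agrees with the classical one almost everywhere.
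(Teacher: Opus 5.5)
Your proposal is correct and is essentially the paper's argument: the weak shape operator scales as $S_{\mathrm w,t}=a(t)^{-1}S_{\mathrm w}$, so $\sigma_k$ scales by $a(t)^{-k}$, and combining $da/dt=-a(t)^{-k}$ with Proposition~\ref{prop:weak-interpretation} gives the flow identity almost everywhere. Your graph-chart computation ($\varphi_t(y)=a\varphi(y/a)$, unchanged induced metric, Hessian scaled by $a^{-1}$) supplies the justification for the scaling rule, which the paper states without detail.
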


\begin{proof}
The scalar $a(t)$ is the homothetic scale factor and is unrelated to the
dimension constant $\lambda=B/A$ in \eqref{eq:lambda}.  The weak shape operators
satisfy $S_{\mathrm w,t}=a(t)^{-1}S_{\mathrm w}$ almost everywhere, so
$\sigma_k(S_{\mathrm w,t})=a(t)^{-k}\sigma_k(S_{\mathrm w})$.  Since
$da/dt=-a(t)^{-k}$,
\[
 \left\langle\frac{\partial X_t}{\partial t},\nu_t\right\rangle
 =\frac{da}{dt}\langle X,\nu\rangle
 =a(t)^{-k}\sigma_k(S_{\mathrm w})=\sigma_k(S_{\mathrm w,t})
\]
almost everywhere.
\end{proof}

\begin{remark}
This section makes no viscosity, level-set, Brakke, varifold, or general
curvature-measure-flow assertion.  The next section gives a more specific
statement about the rotational $\sigma_k$-density measure and a smooth
no-defect approximation;
neither statement asserts existence or uniqueness in any of those general
weak-flow frameworks.
\end{remark}

\section{Rotational \texorpdfstring{$\sigma_k$}{sigma-k}-density measure and
smooth approximation}
\label{sec:curvature-measure}

Let $d\mu_X$ denote the induced $n$-dimensional area measure.  The odd-power
crossing law allows the classical rotational density $\sigma_k\,d\mu_X$ to
extend continuously across the two singular latitudes.
We first construct this continuous-density extension and then give smooth
rotational approximations whose soliton residual tends to zero.  The
extension is not asserted to be unique among all Radon extensions or to agree
with every general notion of curvature measure in nonsmooth geometry.  In
particular, it differs in setting from the classical curvature measures of
Federer~\cite{Federer1959} and from the prescribed-curvature-measure problem
of Guan--Li--Li~\cite{GuanLiLi2012}, which is formulated for smooth
star-shaped admissible hypersurfaces.

At a singular parameter $s_0$, choose a local real lift of the tangent angle
and normalise
\[
 \theta_0:=\theta(s_0)\in\{\pi/2,-\pi/2\}.
\]
Put $\phi=\theta-\theta_0$ and $u=\phi^k$.  The angle is intrinsically
circle-valued.  If $L$ is the total arclength of the closed profile curve, a
real lift need only satisfy $\theta(s+L)=\theta(s)+2\pi m$ for some
$m\in\mathbb Z$; none of the expressions below requires a periodic
real-valued angle.

The odd-power crossing law produces a continuous curvature density and hence
no atomic contribution at either singular latitude.

\begin{proposition}
\label{prop:no-curvature-atom}
Let $Z\subset\Sn^1$ be the two singular parameter values of the closed profile
curve constructed above.  The quantity
\[
 \chi=\cos^{k-1}\theta\,\frac{d\theta}{ds}
\]
has a continuous extension through every $s_0\in Z$, with
\begin{equation}\label{eq:chi-crossing-value}
 \chi(s_0)=\frac{r(s_0)^{k-1}x(s_0)\sin\theta_0}{A}.
\end{equation}
Consequently the formula
\begin{equation}\label{eq:rotational-curvature-measure}
 d\mathcal K_k[X]
 =
 \left[
 -A r^{n-k}\chi+B r^{n-k-1}\cos^k\theta
 \right]\,ds\,d\omega
\end{equation}
defines a finite signed Radon measure on the fixed parameter manifold
$M=\Sn^1\times\Sn^{n-1}$, where $d\omega$ is the standard volume measure on
$\Sn^{n-1}$.  Moreover,
\begin{equation}\label{eq:renormalised-sigmak}
 \sigma_k^{\mathrm{ren}}
 =
 -A\frac{\chi}{r^{k-1}}
 +B\left(\frac{\cos\theta}{r}\right)^k
\end{equation}
is continuous on $M$ and satisfies
\[
 \sigma_k^{\mathrm{ren}}=-\langle X,\nu\rangle,
 \qquad
 \mathcal K_k[X]=\sigma_k^{\mathrm{ren}}\,d\mu_X.
\]
In particular,
\[
 \mathcal K_k[X](\{s_0\}\times\Sn^{n-1})=0
 \qquad (s_0\in Z).
\]
\end{proposition}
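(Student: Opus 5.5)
The plan is to read $\chi$ off the profile system and then transfer everything to $M$ through the area element. On the regular set $\cos\theta\neq0$, the angular equation of \eqref{eq:profile-system}, multiplied by $r^{k-1}$, gives the identity
\[
 A\chi=r^{k-1}\Bigl(x\sin\theta-r\cos\theta+B\frac{\cos^k\theta}{r^k}\Bigr).
\]
The right-hand side is a continuous function of the state $(x,r,\theta)$, and the closed profile curve is $C^1$ with $\theta$ continuous (Lemma~\ref{lem:unique-crossing}). Hence this formula defines the continuous extension of $\chi$ through each $s_0\in Z$. At $s_0$ one has $\cos\theta_0=0$, so only the $x\sin\theta$ term survives, and this gives \eqref{eq:chi-crossing-value}.

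As a cross-check with the desingularisation, I will use $\cos(\theta_0+\phi)=-\varepsilon\sin\phi$ to write $\chi=(-\varepsilon)^{k-1}\sin^{k-1}\phi\,d\phi/ds$. Since $k$ is odd, $(-\varepsilon)^{k-1}=1$. Then $\sin^{k-1}\phi\,d\phi/ds=(\sin\phi/\phi)^{k-1}\,\frac1k\,du/ds$, and $du/ds=H$ is continuous, so the same limit appears. The step that needs care is the continuity of the extension at $s_0$. It relies on uniqueness of the continuation, so the regular-side formula holds on both sides, and on the absence of any other zero of $\cos\theta$ on the closed curve. The latter follows because the half-profile has a single vertical tangent (the one-way crossing observation) and the reflection of Proposition~\ref{prop:reflection} contributes only its mirror image.

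Next I will define $\sigma_k^{\mathrm{ren}}$ by \eqref{eq:renormalised-sigmak}. It is continuous on $M$ because $\chi$ is continuous and $r$ is bounded below by a positive constant on the compact curve. Substituting the displayed identity for $A\chi$ gives
\[
 \sigma_k^{\mathrm{ren}}=-(x\sin\theta-r\cos\theta)=-\langle X,\nu\rangle
\]
everywhere, including at the singular latitudes.

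For the measure, the rotational parametrisation \eqref{eq:rot-param} has area element $d\mu_X=r^{n-1}\,ds\,d\omega$. Multiplying $\sigma_k^{\mathrm{ren}}$ by $r^{n-1}$ produces exactly the density in \eqref{eq:rotational-curvature-measure}, so $\mathcal K_k[X]=\sigma_k^{\mathrm{ren}}\,d\mu_X$. Away from $Z$, $\chi/r^{k-1}$ times $-A$ together with the $B$ term is the classical $\sigma_k$ from the proof of Proposition~\ref{prop:reduction}, so this measure extends the classical density. Its density is a continuous function on the compact manifold $M$. Therefore $\mathcal K_k[X]$ is a finite signed Radon measure, absolutely continuous with respect to $ds\,d\omega$. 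Each set $\{s_0\}\times\Sn^{n-1}$ has $ds\,d\omega$-measure zero, so these latitudes carry no mass and there is no atom.
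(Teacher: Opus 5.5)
Your proof is correct, and it takes a slightly more direct route than the paper.

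The paper extends $\chi$ through $s_0$ using the odd-density identity $\chi=(\sin\phi/\phi)^{k-1}\frac1k\,du/ds$, together with $u=\phi^k\in C^1$ and the explicit value of $du/ds(s_0)$ from Lemma~\ref{lem:unique-crossing}. It then obtains $\sigma_k^{\mathrm{ren}}=-\langle X,\nu\rangle$ in three steps: it checks the identity on the smooth locus, checks it separately at $s_0$ via \eqref{eq:chi-crossing-value}, and invokes continuity of both sides.

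You instead multiply the angular equation by $r^{k-1}$ to get $A\chi=r^{k-1}\langle X,\nu\rangle+B\cos^k\theta/r$ off $Z$. The right-hand side is a continuous function of the state, and the state is continuous through the crossing. So this one identity gives the extension, the value at $s_0$, and $\sigma_k^{\mathrm{ren}}=-\langle X,\nu\rangle$ as a pointwise algebraic identity, with no separate density argument.

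Your version needs only two facts: $\theta$ is continuous at $s_0$, and the profile equation holds on both sides of it. The latter holds by the definition of a nondegenerate crossing, not because of uniqueness of the continuation as you suggest. Your argument therefore makes clear that the absence of an atom follows from the equation plus $C^1$ regularity of the profile.

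The paper's computation is the same identity written in the $u$-chart: $\frac1k(\sin\phi/\phi)^{k-1}H$ equals your right-hand side divided by $A$. Its advantage is mainly expository, since it ties the continuous density explicitly to the odd-power crossing law of Section~\ref{sec:crossing}. Your cross-check recovers that link anyway.
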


\begin{proof}
Since
$\cos(\theta_0+\phi)=-\sin\theta_0\sin\phi$ and
$\sin^2\theta_0=1$, away from $s_0$ one has
\begin{equation}\label{eq:odd-density-identity}
 \cos^{k-1}\theta\,\frac{d\theta}{ds}
 =
 \left(\frac{\sin\phi}{\phi}\right)^{k-1}\frac1k\frac{du}{ds}.
\end{equation}
The factor $\sin\phi/\phi$ extends continuously with value $1$.  The
odd-power
crossing construction gives $u\in C^1$ and
\[
 \frac{du}{ds}(s_0)=\frac{k r(s_0)^{k-1}x(s_0)\sin\theta_0}{A}.
\]
This proves the continuous extension and
\eqref{eq:chi-crossing-value}.

The functions $r$, $\cos\theta=dx/ds$, and the extended $\chi$ are continuous,
and the closed profile curve satisfies $r\geq r_{\min}>0$.  Hence the density in
\eqref{eq:rotational-curvature-measure} is continuous on the compact
parameter manifold and defines a finite signed Radon measure.  At every
smooth point, Proposition~\ref{prop:reduction} and
$d\mu_X=r^{n-1}ds\,d\omega$ give
\[
 \sigma_k\,d\mu_X
 =
 \left[
 -A r^{n-k}\cos^{k-1}\theta\,\frac{d\theta}{ds}
 +B r^{n-k-1}\cos^k\theta
 \right]\,ds\,d\omega,
\]
so this measure agrees with the classical curvature density there.

At $s_0\in Z$, equations \eqref{eq:chi-crossing-value} and
$\cos\theta_0=0$ give
\[
 \sigma_k^{\mathrm{ren}}(s_0)
 =-x(s_0)\sin\theta_0
 =-\langle X,\nu\rangle(s_0).
\]
The two sides are continuous and agree on the smooth locus, so they agree
everywhere.  Multiplication by $d\mu_X$ proves the measure identity.  Finally,
a continuous density with respect to $ds\,d\omega$ assigns zero mass to each
singular latitude.

On the other hand, Proposition~\ref{prop:weak-interpretation} gives
$S_{\mathrm w}=S$ almost everywhere off the singular latitudes, which are
$\mathcal H^n$-null.  Consequently
\[
 \mathcal K_k[X]=\sigma_k(S_{\mathrm w})\,d\mu_X
\]
as signed measures.  The continuous-density construction above shows that
this almost-everywhere curvature measure acquires no additional singular
part on either latitude.
\end{proof}

Agreement with the classical density on the smooth locus does not by itself
determine the extension.

\begin{remark}
\label{rem:curvature-measure-nonuniqueness}
The qualification in Proposition~\ref{prop:no-curvature-atom} is essential.
If one asks only for agreement with $\sigma_k\,d\mu_X$ on the smooth locus,
then for arbitrary constants $c_{s_0}$ one may add
\[
 \sum_{s_0\in Z}c_{s_0}\,
 \delta_{s_0}\otimes d\omega.
\]
Here $\delta_{s_0}$ denotes the Dirac mass at the profile parameter $s_0$.
Thus the continuous-density, no-singular-part prescription selects the
extension above, but the extension is not unique among all signed Radon
extensions.
\end{remark}

The constructed torus also admits smooth rotational approximations whose
soliton residual has no limiting defect measure.

\begin{proposition}
\label{prop:smooth-no-defect}
There are smooth embedded rotational tori
$X_j:M\to\R^{n+1}$ such that, for every $1\leq p<k/(k-1)$,
\[
 X_j\longrightarrow X
 \quad\text{in }C^1(M)\cap W^{2,p}(M).
\]
Let $S_{X_j}$ and $\nu_j$ denote the shape operator and outward unit normal
of $X_j$, respectively, and let $d\mu_{X_j}$ be its induced volume measure.
When the curvature and support-function measures are regarded as measures on
the fixed parameter manifold $M$,
\begin{align}
 \sigma_k(S_{X_j})\,d\mu_{X_j}
 &\longrightarrow\mathcal K_k[X],
 \label{eq:fixed-domain-curvature-TV}\\
 \langle X_j,\nu_j\rangle\,d\mu_{X_j}
 &\longrightarrow\langle X,\nu\rangle\,d\mu_X
 \label{eq:fixed-domain-support-TV}
\end{align}
in total variation.  In particular, the fixed-domain residual satisfies
\begin{equation}\label{eq:fixed-domain-residual-TV}
 \left\|
 \bigl(\sigma_k(S_{X_j})+\langle X_j,\nu_j\rangle\bigr)d\mu_{X_j}
 \right\|_{\mathrm{TV}}\longrightarrow0.
\end{equation}
Here $\|\cdot\|_{\mathrm{TV}}$ denotes the total variation norm.
After push-forward to $\R^{n+1}$, the two measures in
\eqref{eq:fixed-domain-curvature-TV}--\eqref{eq:fixed-domain-support-TV}
converge separately only weakly as Radon measures in general, whereas the
pushed-forward residual still converges to zero in total variation.
\end{proposition}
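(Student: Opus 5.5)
We approximate only the tangent-angle function and then rebuild the profile curve from it, so that the rotational structure is preserved at every stage. Let $\theta$ be a real lift of the tangent angle of the closed profile curve $\gamma=(x,r)$, with total arclength $L$. Near each singular parameter $s_0\in Z$ the function $u=(\theta-\theta_0)^k$ is $C^1$ and strictly monotone. Away from $Z$ the angle $\theta$ is smooth.

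\textbf{Step 1: smooth the angle.} Near $s_0$ we mollify $u$ to get smooth $u_j$ with $u_j\to u$ in $C^1$ and $du_j/ds$ bounded away from zero. We set $\theta_j=\theta_0+u_j^{1/k}$ there. Since $u_j$ crosses zero with nonzero derivative, $u_j^{1/k}$ is not smooth, so we must modify $u_j$ further. On a shrinking interval $|s-s_0|<\epsilon_j$ we replace $\phi_j$ by a smooth monotone function $\psi_j$. We choose $\psi_j$ to have slope comparable to $\epsilon_j^{-(k-1)/k}$, to agree with $u_j^{1/k}$ outside the interval, and to satisfy $|\psi_j-\phi|\lesssim\epsilon_j^{1/k}$ and $|\psi_j^k-u|_{C^1}\to0$.

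Away from $Z$ we keep $\theta_j=\theta$. Then $\theta_j\to\theta$ uniformly, and $d\theta_j/ds\to d\theta/ds$ in $L^p$ for $p<k/(k-1)$. The $L^p$ convergence follows from the pointwise bound $|d\theta_j/ds|\le C|s-s_0|^{-(k-1)/k}+C\epsilon_j^{-(k-1)/k}\mathbf 1_{|s-s_0|<\epsilon_j}$ and dominated convergence. The indicator term has $L^p$ norm $\epsilon_j^{1/p-(k-1)/k}\to0$.

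\textbf{Step 2: close the curve.} Integrating $(\cos\theta_j,\sin\theta_j)$ generally leaves a small closing error $O(\|\theta_j-\theta\|_{L^1})$. We remove it by adding to $\theta_j$ a small smooth correction supported in the regular region. This correction is a two-parameter combination of fixed bump functions, chosen by the implicit function theorem, because the linearised closing map $\int(-\sin\theta,\cos\theta)\beta\,ds$ is surjective on such bumps. Reflection symmetry can also be used to cut the closing condition to one parameter. The resulting curves $\gamma_j$ are smooth and closed. They converge to $\gamma$ in $C^1\cap W^{2,p}$ and keep $r\ge r_{\min}/2$. By $C^1$ closeness to a simple curve they are also embedded, so rotation gives embedded tori $X_j$ converging in $C^1(M)\cap W^{2,p}(M)$.

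\textbf{Step 3: convergence of the measures.} On $M$ the curvature density of $X_j$ is $[-Ar_j^{n-k}\chi_j+Br_j^{n-k-1}\cos^k\theta_j]\,ds\,d\omega$, with $\chi_j=\cos^{k-1}\theta_j\,d\theta_j/ds$. The key point, and the main obstacle, is to show $\chi_j\to\chi$ in $L^1$; uniform convergence near $Z$ is not needed. We use identity \eqref{eq:odd-density-identity} for $\theta_j$. Outside the $\epsilon_j$-windows it gives $\chi_j=(\sin\phi_j/\phi_j)^{k-1}k^{-1}du_j/ds$, which converges uniformly to $\chi$. Inside a window, $|\chi_j|\le|\phi_j|^{k-1}|d\psi_j/ds|\lesssim\epsilon_j^{(k-1)/k}\epsilon_j^{-(k-1)/k}=O(1)$. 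The windows have length $O(\epsilon_j)$, so their $L^1$ contribution vanishes.

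Then \eqref{eq:fixed-domain-curvature-TV} follows from Proposition~\ref{prop:no-curvature-atom}, since total variation on $M$ is the $L^1$ norm of the density. The support-function statement \eqref{eq:fixed-domain-support-TV} is immediate from uniform convergence of $(x_j\sin\theta_j-r_j\cos\theta_j)r_j^{n-1}$. The residual bound \eqref{eq:fixed-domain-residual-TV} follows because the two limits cancel by $\mathcal K_k[X]=-\langle X,\nu\rangle\,d\mu_X$.

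\textbf{Step 4: push-forward to $\R^{n+1}$.} Push-forward by the embeddings $X_j$ does not increase total variation, so the residual still converges to zero in total variation. The individual measures converge only weakly: the pushed measures $X_{j\#}\mu$ and $X_\#\mu$ live on different supports, so they are mutually singular unless they coincide. Their weak convergence follows from uniform convergence $X_j\to X$ together with the total-variation convergence on $M$.
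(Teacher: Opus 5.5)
Your argument is correct in substance, but it takes a different and heavier route than the paper. The paper never touches the angle. It mollifies the profile curve $\gamma$ itself by periodic convolution in a fixed parameter $\zeta$, so closedness is automatic and $\gamma_j\to\gamma$ in $C^1\cap W^{2,p}$ is inherited directly from $\gamma\in C^1\cap W^{2,p}$. It then writes $d\theta_j/d\zeta=\det(d\gamma_j/d\zeta,d^2\gamma_j/d\zeta^2)/v_j^2$, which converges in $L^1$ because second derivatives do. The curvature density is a uniformly convergent coefficient times $d\theta_j/d\zeta$ plus a uniformly convergent term, so $f_j\to f$ in $L^1$ with no local analysis at the singular latitudes.

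Your construction instead needs a hand-built window $\psi_j$ and an implicit-function-theorem closing correction. In return it keeps the arclength parametrisation and yields $\sup_j\|\chi_j\|_{L^\infty}<\infty$. This comes from the odd-density identity off the windows and from $|\chi_j|\lesssim\epsilon_j^{(k-1)/k}\epsilon_j^{-(k-1)/k}$ inside them. Thus $\sigma_k(S_{X_j})$ stays uniformly bounded and the residual tends to zero in every $L^q$, $q<\infty$, which is more than the proposition asserts. For the $L^1$ statement alone, however, Step 3 is not really an obstacle. Step 1 already gives $d\theta_j/ds\to d\theta/ds$ in $L^p\subset L^1$, and $\cos^{k-1}\theta_j\to\cos^{k-1}\theta$ uniformly. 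So $\chi_j$ converges in $L^1$ to $\cos^{k-1}\theta\,d\theta/ds$, which equals $\chi$ off $Z$; this is exactly the paper's mechanism.

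Two statements need correcting.

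First, the requirement $\|\psi_j^k-u\|_{C^1}\to0$ is impossible. Since $\psi_j$ changes sign, it has a zero $z_j$, and there $(\psi_j^k)'(z_j)=k\psi_j(z_j)^{k-1}\psi_j'(z_j)=0$ because $k\geq3$. But $u'(z_j)\to u'(s_0)\neq0$. The convergence holds only off the windows, where $\psi_j^k=u_j$, and that is all Step 3 uses, so simply drop the requirement. Also centre the windows at the zero of $u_j$, and take the mollification scale much smaller than $\epsilon_j$ so that your pointwise bound in terms of $|s-s_0|$ holds.

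Second, in Step 4 the claim that $X_{j\#}\mu$ and $X_\#\mu$ are mutually singular unless they coincide is false. In your own construction, $\gamma_j=\gamma$ on the arc preceding the first modification. What you do prove is weak convergence of the separate push-forwards and total-variation convergence of the pushed residual, and that is what the proposition asserts. The failure of ambient total-variation convergence ``in general'' should be shown by an example such as the paper's axially translated torus, not by a general singularity claim.
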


\begin{proof}
Let $\zeta$ denote an oriented periodic coordinate on $\Sn^1$, and
write $\gamma(\zeta)=(x(\zeta),r(\zeta))$.  Let
$\rho_{\varepsilon_j}$ be periodic approximate identities on $\Sn^1$, with
$\varepsilon_j\downarrow0$, and set
\[
 \gamma_j=\rho_{\varepsilon_j}*\gamma.
\]
Periodic convolution gives
\[
 \gamma_j\to\gamma
 \quad\text{in }C^1\cap W^{2,p}
 \quad\text{for every }1\leq p<k/(k-1).
\]
Because $\gamma$ is a regular embedded $C^1$ curve and
$\min r>0$, uniform $C^1$ convergence gives
$|d\gamma_j/d\zeta|\geq c>0$ and $r_j\geq r_{\min}/2$ for large $j$.
Embeddedness also persists.  For pairs of nearby parameters, uniform
continuity of $d\gamma/d\zeta$, together with projection onto the tangent
direction, gives a common linear lower bound for
$|\gamma_j(\zeta)-\gamma_j(\widetilde\zeta)|$.  For pairs separated by a
fixed positive circular distance, compactness and injectivity of $\gamma$
give a positive lower bound that persists under uniform convergence.  Thus $\gamma_j$ is
embedded for large $j$.  Positivity of $r_j$ then shows that
\[
 X_j(\zeta,\omega)=(x_j(\zeta),r_j(\zeta)\omega)
\]
is a smooth embedding.  In each chart of a fixed finite atlas on
$\Sn^{n-1}$, derivatives of $X_j$ of order at most two are linear
combinations of derivatives of $x_j,r_j$ of order at most two and fixed
smooth functions of $\omega$.  The profile convergence therefore also gives
$X_j\to X$ in $C^1(M)\cap W^{2,p}(M)$.

The parameter orientation is unchanged under this $C^1$ approximation:
the family of regular embedded curves obtained for sufficiently small
$\varepsilon_j$ is an isotopy through the same oriented parametrisation.
Hence the normals used below are the outward normals of the bounded solid
tori, consistently with the convention fixed above.

Put
\[
 v_j=\left|\frac{d\gamma_j}{d\zeta}\right|,\qquad
 p_j=\frac{dx_j/d\zeta}{v_j},\qquad
 \frac{d\theta_j}{d\zeta}=
 \frac{\det(d\gamma_j/d\zeta,d^2\gamma_j/d\zeta^2)}{v_j^2},
\]
and define $v$, $p$, and $d\theta/d\zeta$ analogously almost everywhere for
$\gamma$.  Thus $v_j$ is the speed of the profile parametrisation and $p_j$
is the axial component of its unit tangent.
Uniform convergence of the first derivatives, strong $L^1$ convergence of
the second derivatives, and the common positive lower bound for $v_j$ imply
\begin{equation}\label{eq:theta-dot-L1}
 \frac{d\theta_j}{d\zeta}\longrightarrow\frac{d\theta}{d\zeta}
 \quad\text{in }L^1(\Sn^1).
\end{equation}
Indeed, bilinearity gives
\[
 \begin{aligned}
 &\det(d\gamma_j/d\zeta,d^2\gamma_j/d\zeta^2)
 -\det(d\gamma/d\zeta,d^2\gamma/d\zeta^2)\\
 &\qquad
 =\det(d\gamma_j/d\zeta-d\gamma/d\zeta,d^2\gamma_j/d\zeta^2)
 +\det(d\gamma/d\zeta,
 d^2\gamma_j/d\zeta^2-d^2\gamma/d\zeta^2).
 \end{aligned}
\]
The first term tends to zero in $L^1$ because
$d\gamma_j/d\zeta-d\gamma/d\zeta\to0$ uniformly and
$\{d^2\gamma_j/d\zeta^2\}$ is bounded in $L^1$; the second tends to zero by
strong $L^1$ convergence of $d^2\gamma_j/d\zeta^2$.  Since $v_j^{-2}\to v^{-2}$
uniformly, \eqref{eq:theta-dot-L1} follows.

With respect to the fixed product measure $d\zeta\,d\omega$, the curvature
measure has density
\begin{equation}\label{eq:fixed-curvature-density}
 f_j=
 -A r_j^{n-k}p_j^{k-1}\frac{d\theta_j}{d\zeta}
 +B r_j^{n-k-1}p_j^k v_j,
\end{equation}
while the support-function measure has density
\begin{equation}\label{eq:fixed-support-density}
 g_j=\left(x_j\frac{dr_j}{d\zeta}
 -r_j\frac{dx_j}{d\zeta}\right)r_j^{n-1}.
\end{equation}
To verify these formulas, observe that
\[
 ds=v_j\,d\zeta,\qquad
 \cos\theta_j=p_j,\qquad
 \frac{d\theta_j}{ds}=\frac{1}{v_j}\frac{d\theta_j}{d\zeta}.
\]
Substitution in the smooth density from
Proposition~\ref{prop:reduction} gives
\eqref{eq:fixed-curvature-density}.  Moreover,
\[
 \nu_j=
 \left(\frac{dr_j/d\zeta}{v_j},
 -\frac{dx_j/d\zeta}{v_j}\omega\right),
 \qquad
 d\mu_{X_j}=r_j^{n-1}v_j\,d\zeta\,d\omega,
\]
which gives \eqref{eq:fixed-support-density}.
Equation \eqref{eq:theta-dot-L1} and uniform convergence of all coefficients
give $f_j\to f$ in $L^1$.  For its only second-derivative term, for example,
\[
 \left\|a_j\frac{d\theta_j}{d\zeta}
 -a\frac{d\theta}{d\zeta}\right\|_{L^1}
 \leq
 \|a_j-a\|_{L^\infty}\left\|\frac{d\theta_j}{d\zeta}\right\|_{L^1}
 +\|a\|_{L^\infty}
 \left\|\frac{d\theta_j}{d\zeta}-\frac{d\theta}{d\zeta}\right\|_{L^1},
\]
where $a_j=-Ar_j^{n-k}p_j^{k-1}$ and $a=-Ar^{n-k}p^{k-1}$; the right-hand
side tends to zero.
The same parameter change applied to
\eqref{eq:rotational-curvature-measure} shows that
$f\,d\zeta\,d\omega=\mathcal K_k[X]$.
The polynomial expression \eqref{eq:fixed-support-density} converges
uniformly to $g$, where
$g\,d\zeta\,d\omega=\langle X,\nu\rangle d\mu_X$.
This proves \eqref{eq:fixed-domain-curvature-TV} and
\eqref{eq:fixed-domain-support-TV}.  Proposition~\ref{prop:no-curvature-atom}
gives $f+g=0$, and hence the triangle inequality proves
\eqref{eq:fixed-domain-residual-TV}.

For the ambient assertion, define the fixed-domain signed measures
$K_j:=f_j\,d\zeta\,d\omega$ and
$H_j:=g_j\,d\zeta\,d\omega$, with limits
$K:=f\,d\zeta\,d\omega$ and $H:=g\,d\zeta\,d\omega$.  For a map $F$ and a
measure $\mu$, let $F_\#\mu$ denote the push-forward of $\mu$ by $F$.
Uniform convergence
$X_j\to X$ and total-variation
convergence on $M$ imply, for every compactly supported continuous test
function $\psi$,
\[
 \begin{aligned}
 \left|\int_M\psi(X_j)\,dK_j-\int_M\psi(X)\,dK\right|
 &\leq
 \|\psi\|_\infty\|K_j-K\|_{\mathrm{TV}}\\
 &\quad+
 \int_M|\psi(X_j)-\psi(X)|\,d|K|
 \longrightarrow0,
 \end{aligned}
\]
and similarly for $H_j$.  Thus the separate push-forwards converge weakly as
Radon measures.  On the other hand, push-forward contracts total variation, so
\[
 \bigl\|(X_j)_\#(K_j+H_j)\bigr\|_{\mathrm{TV}}
 \leq\|K_j+H_j\|_{\mathrm{TV}}\longrightarrow0.
\]
\end{proof}

Ambient total-variation convergence cannot in general be required for the
two separate surface-supported measures.

\begin{remark}
\label{rem:ambient-TV}
Even smooth convergence of moving embedded hypersurfaces does not imply
ambient total-variation convergence of their separate surface-supported
measures.  For example, translate a standard smooth rotational torus by
$\varepsilon_j$ in the axial direction.  The translated tori converge
smoothly in a fixed parametrisation, but their intersections with the
original torus have zero $n$-dimensional surface measure.  The corresponding
nonzero curvature measures are therefore mutually singular.  This shows why
Proposition~\ref{prop:smooth-no-defect} asserts ambient total-variation
convergence only for the residual, not for its two separate terms.
\end{remark}

\bibliographystyle{amsplain}
\bibliography{references}

\end{document}